\documentclass[reqno, 10pt, oneside]{amsart}

\usepackage{latexsym, amsmath,amssymb}

\usepackage{ulem}

\usepackage{hyperref}
\hypersetup{colorlinks=true, citecolor=cyan, linkcolor=blue}
\usepackage{lmodern} 
\usepackage{textcomp}

\usepackage[initials, alphabetic,backrefs,msc-links]{amsrefs}
\usepackage{graphicx}
\usepackage{xcolor}
\usepackage{enumitem}
\usepackage[utf8]{inputenc}

 \numberwithin{equation}{section}

\usepackage{amsmath}

\newcommand{\strt}[1]{\rule{0pt}{#1}}

\def\XXint#1#2#3{{\setbox0=\hbox{$#1{#2#3}{%
\int}$ }
\vcenter{\hbox{$#2#3$ }}\kern-.6\wd0}}

\renewcommand{\epsilon}{\varepsilon}
\newtheorem{theorem}{Theorem}

\newtheorem{lemma}[theorem]{Lemma}
\newtheorem{corollary}[theorem]{Corollary}

\newtheorem{proposition}[theorem]{Proposition}
\newtheorem{definition}[theorem]{Definition}
\newtheorem{remark}[theorem]{Remark}
\newcommand{\bth}{\begin{theorem}}
\newcommand{\ble}{\begin{lemma}}
\newcommand{\bcor}{\begin{corr}}

\newcommand{\bdeff}{\begin{deff}}

\newcommand{\bprop}{\begin{proposition}}
\newcommand{\ele}{\end{lemma}}
\newcommand{\ecor}{\end{corr}}
\newcommand{\edeff}{\end{deff}}

\numberwithin{theorem}{section}

\newcommand{\eprop}{\end{proposition}}

\newcommand{\eps}{\varepsilon}

\renewcommand{\l}{\lambda}

\renewcommand{\Pi}{\varPi}

\renewcommand{\epsilon}{\varepsilon}

\newcommand{\R}{{\mathbb R}}

\newcommand{\N}{{\mathbb N}}

\newcommand{\norm}[1]{\left\|#1\right\|}

\usepackage{esint}

\usepackage{comment}

\newcommand{\ed}{\color{black}}

\allowdisplaybreaks

\usepackage{geometry}

\def\vint_#1{\mathchoice%
        {\mathop{\kern 0.2em\vrule width 0.6em height 0.69678ex depth -0.58065ex
                \kern -0.8em \intop}\nolimits_{\kern -0.4em#1}}%
        {\mathop{\kern 0.1em\vrule width 0.5em height 0.69678ex depth -0.60387ex
                \kern -0.6em \intop}\nolimits_{#1}}%
        {\mathop{\kern 0.1em\vrule width 0.5em height 0.69678ex
            depth -0.60387ex
                \kern -0.6em \intop}\nolimits_{#1}}%
        {\mathop{\kern 0.1em\vrule width 0.5em height 0.69678ex depth -0.60387ex
                \kern -0.6em \intop}\nolimits_{#1}}}
\def\vintslides_#1{\mathchoice%
        {\mathop{\kern 0.1em\vrule width 0.5em height 0.697ex depth -0.581ex
                \kern -0.6em \intop}\nolimits_{\kern -0.4em#1}}%
        {\mathop{\kern 0.1em\vrule width 0.3em height 0.697ex depth -0.604ex
                \kern -0.4em \intop}\nolimits_{#1}}%
        {\mathop{\kern 0.1em\vrule width 0.3em height 0.697ex depth -0.604ex
                \kern -0.4em \intop}\nolimits_{#1}}%
        {\mathop{\kern 0.1em\vrule width 0.3em height 0.697ex depth -0.604ex
                \kern -0.4em \intop}\nolimits_{#1}}}

\newcommand{\aveint}[2]{\mathchoice%
        {\mathop{\kern 0.2em\vrule width 0.6em height 0.69678ex depth -0.58065ex
                \kern -0.8em \intop}\nolimits_{\kern -0.45em#1}^{#2}}%
        {\mathop{\kern 0.1em\vrule width 0.5em height 0.69678ex depth -0.60387ex
                \kern -0.6em \intop}\nolimits_{#1}^{#2}}%
        {\mathop{\kern 0.1em\vrule width 0.5em height 0.69678ex depth -0.60387ex
                \kern -0.6em \intop}\nolimits_{#1}^{#2}}%
        {\mathop{\kern 0.1em\vrule width 0.5em height 0.69678ex depth -0.60387ex
                \kern -0.6em \intop}\nolimits_{#1}^{#2}}}

\def\XXint#1#2#3{{\setbox0=\hbox{$#1{#2#3}{\int}$}
    \vcenter{\hbox{$#2#3$}}\kern-.5\wd0}}

\newcommand{\vertiii}[1]{{\left\vert\kern-0.25ex\left\vert\kern-0.25ex\left\vert #1 
    \right\vert\kern-0.25ex\right\vert\kern-0.25ex\right\vert}}

\newcommand{\vertii}[1]{{\left\vert\kern-0.25ex\left\vert\kern-0.25ex  #1 
    \kern-0.25ex\right\vert\kern-0.25ex\right\vert}}

\makeatletter
\@namedef{subjclassname@2020}{\textup{2020} Mathematics Subject Classification}
\makeatother

\begin{document}

\title[From generalized Poincaré to Poincaré-Sobolev inequalities]
{From generalized Poincaré to Poincaré-Sobolev inequalities via self-improving methods}

\author[A. Claros]{Alejandro Claros}
\address[A. Claros]{BCAM -- Basque Center for Applied Mathematics, Bilbao, Spain\newline
Universidad del Pa\'is Vasco / Euskal Herriko Unibertsitatea (UPV/EHU), Bilbao, Spain}
\email{aclaros@bcamath.org, aclaros003@ikasle.ehu.eus}

\author[C. P\'erez]{Carlos P\'erez}
\address[C. P\'erez]{BCAM -- Basque Center for Applied Mathematics, Bilbao, Spain\newline
Universidad del Pa\'is Vasco / Euskal Herriko Unibertsitatea (UPV/EHU), Bilbao, Spain\newline
Ikerbasque, Bilbao, Spain}
\email{cperez@bcamath.org}

\author[L. Zheng]{Linfei Zheng}
\address[L. Zheng]{
    Center for Applied Mathematics, Tianjin University, Weijin Road 92, 300072 Tianjin, China		
    \newline
    BCAM -- Basque Center for Applied Mathematics, 48009 Bilbao, Spain}
\email{linfei\_zheng@tju.edu.cn}

\thanks{A. Claros is supported by the Basque Government through the BERC 2022-2025 program, by the Ministry of Science and Innovation through Grant PRE2021-099091 funded by BCAM Severo Ochoa accreditation CEX2021-001142-S/MICIN/AEI/10.13039/501100011033 and by ESF+, and by the project PID2023-146646NB-I00 funded by MICIU/AEI/10.13039/501100011033 and by ESF+. 
\\ \hspace*{1.5em}C. P\'erez is supported by the Spanish government through the grant PID2023-146646NB-I00 and by Severo Ochoa accreditation CEX2021-001142-S, both at BCAM, and also by the Basque Government through grant IT1615-22 at the University of the Basque Country and by the BERC program 2022-2025 at BCAM.
\\ \hspace*{1.5em} L. Zheng is supported by the National Key R \& D Program of China (Grant No. 2021YFA1002500) and China Scholarship Council.}

\subjclass[2020]{Primary 30H35, 42B25; Secondary 42B35}

\keywords{Muckenhoupt weights, Self-improving phenomena, Poincaré-Sobolev inequalities}

\begin{abstract}
We establish several improvements to the main results of \cite{PR} and \cite{CP}, refining the seminal self-improving method for generalized Poincaré inequalities from \cite{FPW98, MP98}. These results, together with various related applications, stem from a general self-improving property for functions satisfying the local inequality$$\frac{1}{|Q|}\int_Q |f(x)-f_Q|\,dx \le a(Q)$$for all cubes $Q\subset\mathbb{R}^n$. The functional $a$ is assumed to obey a specific discrete geometric summability condition. By restricting our focus to axis-parallel cubes in $\mathbb{R}^n$, this geometric setting allows us to obtain sharper estimates than those available in more general metric measure spaces.
\end{abstract}

\maketitle

\section{Introduction and main results}

The classical local $(1,1)$ Poincaré inequality asserts that there exists a dimensional constant $c_n>0$ such that for every $f\in W^{1,1}_{\operatorname{loc}}(\R^n)$ and every cube $Q\subset \R^n$ one has 
\begin{equation}\label{I1 P11}
	\frac{1}{|Q|}\int_Q |f(x)-f_Q|\,dx \le c_n \,\ell(Q)\,\frac{1}{|Q|}\int_Q |\nabla f(x)|\,dx, 
\end{equation} 
where $f_Q=\frac{1}{|Q|}\int_Q f(y)\,dy$ denotes the average of $f$ over $Q$, and $\ell(Q)$ is the side length of $Q$.  

This inequality enjoys a self-improving property that has been the focus of recent studies. Notably, this phenomenon does not stem from the presence of the gradient on the right-hand side of \eqref{I1 P11}, but rather is a consequence of a discrete summation condition inherent to the functional therein. Motivated by this, we consider inequalities of the form 
\begin{equation}\label{I2}
	\frac{1}{|Q|}\int_Q |f(x)-f_Q|\,dx \le a(Q),
\end{equation}
valid for every cube $Q$, where $a$ is a functional defined on the family of all cubes in $\R^n$ with sides parallel to the coordinate axes. 
 Inequalities of this type, featuring a local oscillation term on the left-hand side and a functional $a(Q)$ on the right, will be referred to as generalized Poincaré inequalities.

Generalized Poincaré inequalities and their self-improving properties were first studied in a general framework by Franchi, the second author, 
and Wheeden \cite{FPW98}; see also \cite{MP98, LernerPerez}  for further refinements. 
In this setting,   the functional $a$ is required to satisfy a suitable geometric discrete summation condition.
This approach provides a way to obtain weighted Poincaré--Sobolev inequalities without relying on classical representation formulas.

A major motivation for studying weighted Poincaré--Sobolev inequalities comes from the regularity theory of degenerate elliptic equations. In their seminal work, Fabes, Kenig, and Serapioni \cite{FKS} used such inequalities as part of the Moser iteration scheme to prove local Hölder regularity for weak solutions of degenerate elliptic equations whose ellipticity is controlled by an $A_2$ weight (see also \cite{HKM}).

While the self-improving results in these earlier works hold in the more general setting of spaces of homogeneous type, the specific geometry of cubes in $\mathbb{R}^n$ (with sides parallel to the axes) allows for a more delicate analysis. In the present work, we exploit this framework to obtain more precise estimates than those treated in \cite{FPW98} or \cite{MP98}. Within this context, the discrete summation condition introduced in \cite{FPW98} can be stated as follows.

\begin{definition}\label{DefDp}
	Let $w$ be any weight in $\R^n$. We say that the functional $a$ satisfies the weighted $D_p(w)$ condition for $0< p<\infty$ if there is a constant $C$ such that, for any cube $Q$ and any family $\{Q_j\}_j$ of pairwise disjoint subcubes of $Q$, the following inequality holds:
		\begin{equation*}
			\sum_j a(Q_j)^p w(Q_j)  \le C^p  a(Q)^p w(Q) ,
		\end{equation*}
		where $w(E)=\int_E w(x)dx$. The best possible constant $C$ above is denoted by $\left\| a \right\|_{D_p(w)} $, and in this case we write $a\in  D_p(w)$.
\end{definition}

In \cite{FPW98} it was proved that if $w\in A_\infty$ and $a\in D_p(w)$, then the starting point inequality \eqref{I2} implies a weighted weak $L^p$ generalized Poincaré inequality. More recently, the dependence of the constants in the main result was improved in \cite{CP}, always in the context of cubes as a consequence of an extension of the John--Nirenberg theorem. For completeness, we state the latest version of this result below.  

\begin{theorem}[\cite{FPW98,CP}] \label{Thm 1.6 CP}
		Let $w\in A_\infty$ and let $1<p<\infty$. Consider a functional $a$ satisfying the $D_p(w)$ condition with constant $\left\| a \right\|_{D_p(w)} $. Let $f$ be a locally integrable function such that
		\begin{equation*}
			\frac{1}{|Q|} \int_Q |f(x)-f_Q|dx \le a(Q)
		\end{equation*}
		for every cube $Q$. Then, for every cube $Q$,
		\begin{equation}\label{I3}
			\left\| f-f_Q\right\|_{ L^{p,\infty} \left( Q , \frac{w(x)dx}{w(Q)}\right) } \le c_n p  [w]_{A_\infty} \left\| a \right\|_{D_p(w)}  a(Q).
		\end{equation}
\end{theorem}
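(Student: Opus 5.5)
Fix a cube $Q$ and $t>0$. The plan is a Calderón--Zygmund stopping-time argument at a level proportional to $a(Q)$, followed by a good-$\lambda$ style iteration in the weighted measure. Since $w\in A_\infty$ enters only through its constant, I will use the sharp form of the $A_\infty$ property in the following way. For a cube $R$ and a measurable $E\subset R$ with $|E|\le e^{-c_n[w]_{A_\infty}}|R|$, one has $w(E)\le \tfrac12 w(R)$. Equivalently, sets of small Lebesgue proportion have small $w$-proportion, after a loss of exponential type in $[w]_{A_\infty}$; the factor $[w]_{A_\infty}$ in \eqref{I3} is expected to come exactly from this.

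\textbf{Step 1 (decomposition).} Let $L>1$ be a parameter to be chosen. Apply the local Calderón--Zygmund decomposition to $|f-f_Q|\chi_Q$ at height $L\,a(Q)$. This is legitimate because $\frac{1}{|Q|}\int_Q|f-f_Q|\le a(Q)<L a(Q)$. We obtain maximal dyadic subcubes $Q_j$ of $Q$ with the following properties:
\begin{itemize}
\item $L a(Q)<\frac{1}{|Q_j|}\int_{Q_j}|f-f_Q|\le 2^nLa(Q)$;
\item $\sum_j|Q_j|\le |Q|/L$;
\item $|f-f_Q|\le La(Q)$ a.e. off $\bigcup_jQ_j$.
\end{itemize}
On each $Q_j$ we have $|f_{Q_j}-f_Q|\le 2^nLa(Q)$. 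Therefore, for $x\in Q_j$,
$$|f(x)-f_Q|\le |f(x)-f_{Q_j}|+2^nLa(Q).$$

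\textbf{Step 2 (iteration via $D_p(w)$).} Define
$$\Phi=\sup_{R,\,t>0}\frac{t\,w(\{x\in R:|f-f_R|>t\})^{1/p}}{a(R)\,w(R)^{1/p}}.$$
To avoid circularity I would first work with a truncated version of $f$, or restrict the supremum to subcubes of a fixed cube where a priori finiteness holds, so that $\Phi$ is finite. For $t>2^{n+1}La(Q)$, Step 1 gives
$$w(\{x\in Q:|f-f_Q|>t\})\le\sum_j w(\{x\in Q_j:|f-f_{Q_j}|>t/2\})\le \Big(\frac{2\Phi}{t}\Big)^p\sum_j a(Q_j)^pw(Q_j).$$
The obstacle is that bounding the last sum directly by $\|a\|_{D_p(w)}^pa(Q)^pw(Q)$ gains no smallness, so the iteration would not close. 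I would therefore split the $D_p$ sum and exploit the smallness of $w(\bigcup Q_j)$. By Step 1 the union has Lebesgue proportion at most $1/L$. Choosing $L=e^{c_n[w]_{A_\infty}}$ would make $w(\bigcup Q_j)\le\frac12w(Q)$, but this gives the wrong (exponential) dependence on $[w]_{A_\infty}$.

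The better route, which I expect to be the crux, is to take $L\approx 2^{n+1}$ and iterate the decomposition $k$ times. After $k$ generations, the cubes $\{Q^{(k)}_j\}$ have total Lebesgue measure at most $L^{-k}|Q|$. At each generation the oscillation increases additively by at most $2^nLa(Q^{(i)})$, where $Q^{(i)}$ are the ancestor cubes. Using Hölder together with $D_p(w)$ on the chain of ancestors, the level set $\{|f-f_Q|>t\}$ is then contained, up to a $w$-null set, in the generation-$k$ union. The key point is that this must be measured in $w$ rather than in Lebesgue measure. For this I would use the sharp reverse Hölder inequality (Hytönen--Pérez), valid with exponent $1+\frac{1}{c_n[w]_{A_\infty}}$, which yields
$$w(E)\le 2\Big(\frac{|E|}{|R|}\Big)^{1/(c_n[w]_{A_\infty})}w(R).$$
This makes the $w$-measure of the generation-$k$ union decay like $2^{-k/(c_n[w]_{A_\infty})}$. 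Summing the oscillation increments over the generations, weighted by this decay and estimated through $D_p(w)$ via Hölder, produces a geometric series whose sum is $\approx [w]_{A_\infty}$. The factor $p$ would come from converting the $\ell^p$ control of the $a(Q_j)$ furnished by $D_p(w)$ into a weak-$L^p$ bound, through the choice of $k\sim p\log t$.

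\textbf{Step 3 (conclusion).} Optimize $k$ in terms of $t/a(Q)$. This gives
$$w(\{x\in Q:|f-f_Q|>t\})\le\Big(\frac{c_np[w]_{A_\infty}\|a\|_{D_p(w)}a(Q)}{t}\Big)^pw(Q),$$
which is \eqref{I3}. I expect the main obstacle to be Step 2: obtaining linear dependence on $[w]_{A_\infty}$ rather than exponential dependence, which relies on the sharp reverse Hölder exponent.
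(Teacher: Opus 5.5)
Your Step 2 breaks down exactly where you locate the crux.

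\textbf{Where it fails.} The containment of $\{x\in Q:|f-f_Q|>t\}$ in the generation-$k$ union is the John--Nirenberg mechanism. It requires $a(P)\lesssim a(Q)$ for all subcubes $P$ (e.g.\ $a\equiv\|f\|_{\operatorname{BMO}}$). For a point $x$ whose chain stops at generation $i<k$ you only know
\[
|f(x)-f_Q|\le 2^nL\sum_{l\le i}a\big(Q^{(l)}(x)\big).
\]
$D_p(w)$ gives no pointwise control of the individual $a(Q^{(l)}(x))$. It only gives $\sum_{P\in\mathcal G_l}a(P)^pw(P)\le\|a\|^pa(Q)^pw(Q)$ for each generation $\mathcal G_l$, and already a first-generation cube can be tiny with $a(Q_j)$ huge.

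So you must add the exceptional set $\{2^nL\sum_{l<k}g_l>t/2\}$, where $g_l=\sum_{P\in\mathcal G_l}a(P)\chi_P$, and this is where the scheme loses. The bound $\|g_l\|_{L^p(w)}\le\|a\|a(Q)w(Q)^{1/p}$ does not improve with $l$; only the supports shrink. Chebyshev and Minkowski give $(2^{n+1}Lk\|a\|a(Q)/t)^pw(Q)$ for this set. Balancing against $w(\bigcup\mathcal G_k)\le 2L^{-k/(c_n[w]_{A_\infty})}w(Q)$ forces $k\approx c_np[w]_{A_\infty}\log(t/a(Q))$. The outcome is
\[
w(\{x\in Q:|f-f_Q|>t\})\lesssim\Big(\frac{c_np[w]_{A_\infty}\|a\|\log(t/a(Q))\,a(Q)}{t}\Big)^pw(Q),
\]
a logarithm short of \eqref{I3}.

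The geometric series you invoke exists only in $L^q(w)$ with $q<p$, where $\|g_l\|_{L^q(w)}\le\|g_l\|_{L^p(w)}\,w(\bigcup\mathcal G_l)^{\frac1q-\frac1p}$ does decay. That yields strong $L^q$ bounds with constant $\approx[w]_{A_\infty}/(\frac1q-\frac1p)$, which degenerate at the endpoint $q=p$. Your variant with $L=e^{c_n[w]_{A_\infty}}$ fails for a more basic reason than the exponential constant: under $D_p(w)$, smallness of $w(\bigcup_jQ_j)$ never enters $\sum_ja(Q_j)^pw(Q_j)$. Transferring it is precisely what $SD_p^s(w)$ adds.

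\textbf{The missing idea.} Apply $D_p(w)$ only once, as a Chebyshev-type source term on cubes selected by the size of the oscillation, and let the $A_\infty$ exponential decay do the iterating. This is the argument from \cite{CP} that the paper reproduces in the proof of Theorem \ref{Selfimproving Ar}, with $D_p(w)$ used instead of $SD_p^s(w)$ just before \eqref{middle step}. Take $\Omega_t=\{x\in Q:M_Q(f-f_Q)(x)>t\}$ and $\kappa=2^n+1$. The sharp good-$\lambda$ inequality gives
\[
w(\Omega_{\kappa t})\le c_1e^{-\frac{c_2}{[w]_{A_\infty}\gamma}}w(\Omega_t)+w(\{x\in Q:M_Q^{\#}f(x)>\gamma t\}).
\]
The maximal dyadic cubes $R_i$ of the last set satisfy $a(R_i)>\gamma t$ by hypothesis, hence $w(\bigcup_iR_i)\le\|a\|^p(\gamma t)^{-p}a(Q)^pw(Q)$. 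One then absorbs through $\varphi(N)=\sup_{0<t\le N}t^pw(\Omega_t)\le N^pw(Q)<\infty$, which also removes your a priori finiteness issue. Choosing $c_1\kappa^pe^{-c_2/([w]_{A_\infty}\gamma)}=\frac12$ gives $1/\gamma\approx c_np[w]_{A_\infty}$. The factor $p$ comes from $\kappa^p$, and $[w]_{A_\infty}$ from the exponential good-$\lambda$.

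\textbf{Rescuing your structure.} If you want to keep the iterated Calder\'on--Zygmund bound $|f-f_Q|\le 2^nL\sum_{P\in\mathcal S}a(P)\chi_P$, stratify $\mathcal S$ by the size of $a(P)$ rather than by generation. Set $\lambda=t/(c_np[w]_{A_\infty})$ and discard the maximal cubes with $a(P)>\lambda$; their total weight is at most $(\|a\|a(Q)/\lambda)^pw(Q)$ by $D_p(w)$. For $\mathcal S_j=\{P:2^{-j-1}\lambda<a(P)\le 2^{-j}\lambda\}$, apply $D_p(w)$ to the maximal cubes of $\mathcal S_j$. Apply the $A_\infty$ exponential decay to the overlap count of $\mathcal S_j$, allowing about $p[w]_{A_\infty}(j+1)$ overlaps. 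Since $\sum_j(j+1)2^{-j}<\infty$, no logarithm appears.
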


As a consequence of the so-called Kolmogorov inequality, \eqref{I3} yields the following strong estimate:
\begin{equation*}
	\left( \frac{1}{w(Q)} \int_Q |f(x)-f_Q|^{q} w(x)\,dx\right)^{\frac{1}{q}} 
	\le c_n \left( \frac{p}{p-q}\right)^{\frac{1}{q}} p [w]_{A_\infty} \left\| a \right\|_{D_p(w)} \, a(Q),
\end{equation*}
for all $0<q<p$. Thus, the $D_p(w)$ condition implies that \eqref{I2} improves to a weighted $L^q$ inequality for every $0<q<p$, although in general it does not yield the strong norm $L^p$. 
In the case of functionals for which the truncation method (or a weak-implies-strong argument) is applicable, \eqref{I3} does indeed imply the $L^p$ estimate (see \cite{KO03}).  More recently, the condition introduced in Definition \ref{DefDp} was sharpened in \cite{PR}. 
This more refined formulation is satisfied by the fundamental examples and was specifically designed to yield direct strong-type $L^p$ estimates, in contrast to the weak-type $L^p$ bounds obtained under the original $D_p$ condition. The new condition, denoted by $SD_p^s(w)$, is stronger than $D_p(w)$, yet it is satisfied by a broad class of functionals; see \cite{PR, CMPR, HMPV} for examples in a variety of settings.  	
The precise definition is the following. 

\begin{definition}
	Let $w$ be any weight in $\R^n$, and let $s>0$. We say that the functional $a$ satisfies the weighted $SD^s_p(w)$ condition for $0< p<\infty$ if there is a constant $C$ such that, for any cube $Q$ and any family $\{Q_j\}$ of pairwise disjoint subcubes of $Q$, the following inequality holds:
		\begin{equation*}
			\sum_j a(Q_j)^p w(Q_j)  \le C^p 
			\left(\frac{\left| \bigcup_j Q_j \right| }{|Q|} \right) ^{\frac{p}{s}}  a(Q)^p w(Q) .
		\end{equation*}
	The best possible constant $C$ above is denoted by $\left\| a \right\|_{SD_p^s(w)} $, and in this case we write $a\in  SD^s_p(w)$.
\end{definition}

The constants $\|a\|_{D_p(w)}$ and $\|a\|_{SD_p^s(w)}$ are always greater than or equal to 1; we shall omit the subscript when there is no ambiguity; moreover, when working with Lebesgue measure (i.e.\ $w=1$) we will also drop the dependence on $w$ and simply write $\|a\|_{D_p}$ and $\|a\|_{SD_p^s}$. This condition may be viewed as a strengthened version of the $D_p(w)$ condition, incorporating an additional smallness preservation factor on the right-hand side. A typical example of a functional satisfying this condition is  
\begin{equation*}
	a(Q)=\ell(Q)^\alpha \left(\frac{\nu(Q)}{w(Q)}\right)^{\frac{1}{p}},
\end{equation*}  
where $\nu$ is a locally finite measure and $\alpha, p>0$. It was shown in \cite[Lemma 3.2]{PR} that $a \in SD_p^{n/\alpha}(w)$. 
	  
The main self-improving result of \cite{PR} states that if $w\in A_\infty$ and $a\in SD^s_p(w)$, then the inequality \eqref{I2} implies a weighted $L^p$ generalized Poincaré inequality. In that paper,  the second author 
and Rela conjectured that the $A_\infty$ assumption could be removed. A partial step in this direction was later obtained by Martínez-Perales \cite{JM}. More recently, the conjecture was fully resolved by Lerner, Lorist, and Ombrosi \cite{LLO}. Their approach relies on sparse domination to derive a pointwise estimate and, moreover, yields an improved dependence on $s$ in the resulting constant.  One may ask if the assumption $w \in A_\infty$ in the $D_p$-type self-improving result, Theorem \ref{Thm 1.6 CP}, can also be removed; the answer is no. We construct a counterexample in Proposition \ref{prop:Ainfty-necessity-CP} to illustrate this phenomenon.

\begin{theorem} [\cite{PR,LLO}] \label{Thm 5.3 LLO}
		Let $1<p<\infty$. Consider a functional $a$ satisfying the $SD^s_p(w)$ condition with $s>0$ and constant $\left\| a \right\| $. Let $f$ be a locally integrable function such that
		\begin{equation*}
			\frac{1}{|Q|} \int_Q |f(x)-f_Q|dx \le a(Q)
		\end{equation*}
		for every cube $Q$. Then there exists a dimensional constant $c_n$ such that, for any cube $Q$,
		\begin{equation}{\label{results shown in PR and LLO}}
			\left( \frac{1}{w(Q)} \int_Q |f(x)-f_Q|^{p} w(x)dx\right) ^\frac{1}{p} \le c_n (1+s) \left\| a \right\|  a(Q).
		\end{equation}
\end{theorem}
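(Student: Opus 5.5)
We argue through a Calderón--Zygmund stopping-time decomposition, iterated level by level, with the $SD^s_p(w)$ condition supplying a geometric decay of the measures of the stopping cubes. This avoids any appeal to $A_\infty$. Fix $Q$ and normalize so that $a(Q)=1$, which is possible since both sides scale linearly in $f$. Let $L=2^{n+1}$ and run the dyadic Calderón--Zygmund decomposition of $|f-f_Q|$ on $Q$ at height $L\,a(Q)$. This produces maximal dyadic subcubes $\{Q_j^1\}$ of $Q$ with $L<\frac{1}{|Q_j^1|}\int_{Q_j^1}|f-f_Q|\le 2^nL$. Two facts follow:
\[
\Bigl|\bigcup_j Q_j^1\Bigr|\le \frac{|Q|}{L}, \qquad |f_{Q_j^1}-f_Q|\le 2^nL,
\]
and $|f-f_Q|\le L$ a.e.\ off $\bigcup_j Q^1_j$. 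Since $L$ is a constant, it suffices to control $\int_{\cup Q_j^1}|f-f_Q|^pw$.

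Next we iterate inside each $Q_j^1$, using the normalization $a(Q_j^1)$: decompose $|f-f_{Q_j^1}|$ at height $L\,a(Q_j^1)$. Continuing this way builds a tree of cubes $\{Q^k_j\}$ in which every cube $P$ at generation $k$ satisfies $|P\cap\bigcup\mathrm{children}|\le |P|/L$. Telescoping along the chain of ancestors gives, for a.e.\ $x$,
\[
|f(x)-f_Q|\lesssim 2^nL\sum_{P\ni x}a(P),
\]
where the sum runs over the stopping cubes containing $x$, including $Q$ itself. Hence $|f-f_Q|\lesssim \sum_{P} a(P)\chi_P$, which is a sparse-type bound.

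To estimate the $L^p(w)$ norm of $\sum_P a(P)\chi_P$, we group by generations, $g_k=\sum_{j}a(Q^k_j)\chi_{Q^k_j}$. The cubes in a fixed generation are disjoint, so Minkowski's inequality gives $\|\sum_k g_k\|_{L^p(w)}\le\sum_k\|g_k\|_{L^p(w)}$. Apply $SD^s_p(w)$ to the generation-$k$ family as subcubes of $Q$, and use $|\bigcup_j Q^k_j|\le L^{-k}|Q|$, which comes from the sparseness iteration. This yields
\[
\|g_k\|_{L^p(w)}^p=\sum_j a(Q^k_j)^pw(Q^k_j)\le \|a\|^p L^{-kp/s}a(Q)^pw(Q).
\]
Summing the geometric series gives $\sum_k L^{-k/s}\lesssim 1+ s/\log L\lesssim 1+s$, which is exactly the claimed dependence $c_n(1+s)\|a\|$.

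The main obstacle is the telescoping step. The bound $|f_{Q_j^{k+1}}-f_{Q^k_i}|\le 2^nL\,a(Q^k_i)$ must be justified, together with the a.e.\ convergence $f_{P}\to f(x)$ along shrinking chains (by Lebesgue differentiation). Points lying in infinitely many generations must also be handled; these have measure zero since $|\bigcup_j Q_j^k|\to0$. They could still carry $w$-mass if $w$ is not absolutely continuous, but $w$ is a weight, so this is harmless. A second point is the first step, which needs the initial oscillation bound $\frac{1}{|P|}\int_P|f-f_P|\le a(P)$ applied at every node; this is exactly the hypothesis. If the generation-wise estimate loses too much, the fallback is to use $D_p$-type summation within each generation and extract the decay only from the smallness factor $(|\cup Q_j|/|Q|)^{1/s}$, which is what yields the linear dependence on $s$.
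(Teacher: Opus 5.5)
Your argument is correct. It is essentially the sparse-domination proof of Lerner--Lorist--Ombrosi, which the paper cites rather than reproduces. The iterated Calder\'on--Zygmund stopping cubes at heights $L\,a(P)$ give the pointwise bound $|f-f_Q|\lesssim \sum_{P}a(P)\chi_P$ with $|\bigcup_j Q_j^k|\le L^{-k}|Q|$; applying $SD^s_p(w)$ to each disjoint generation and summing $\sum_k L^{-k/s}\lesssim 1+s$ via Minkowski gives $c_n(1+s)\|a\|\,a(Q)$ for an arbitrary weight.

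One worry in your last paragraph is unnecessary: you do not need convergence of $f_P$ along shrinking chains. Almost every point lies in only finitely many generations, and at the last stopping cube the bound $|f-f_{P}|\le L\,a(P)$, valid almost everywhere off its children, already closes the telescoping sum.
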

The $SD_p^s(w)$ condition is notably flexible and has found applications in several contexts, including the multi-parameter setting of rectangles \cite{CMPR} and fractional Poincaré--Sobolev inequalities \cite{HMPV}.

\subsection{Main self-improving results}

In this work, we improve the self-improving result under the assumption $a \in SD_p^s(w)$ and $w \in A_r$ for some $1 \le r < \infty$. Moreover, we show that our result is sharp for a wide range of $r$.

\begin{theorem}\label{Selfimproving Ar}
	Let $1\le p<\infty$, $1\le r<\infty$, $1\le s<\infty$, and let $a$ be a functional over cubes. Let $w\in A_r$ and let $f$ be a locally integrable function satisfying
	\begin{equation}\label{starting-point0}
		\frac{1}{|Q|}\int_Q |f(x)-f_Q|\,dx \le a(Q)
	\end{equation}
	for every cube $Q$.
	
	\begin{itemize}
		\item Assume that $a\in SD_p^s(w)$ and $p<rs$. Then there exists a dimensional constant $c_n>0$ such that, for every cube $Q$,
		\begin{equation}\label{Thm Ar eq}
			\norm{f-f_Q}_{\strt{2.3ex} L^{p_{r,s}^{*},\infty}\left(Q, \frac{w(x)\,dx}{w(Q)}\right)}
			\le
			c_n p_{r,s}^{*}\,[w]_{A_\infty}\,[w]_{A_r}^{\frac{1}{rs}}\,
			\|a\|\,a(Q),
		\end{equation}
		where $p_{r,s}^{*}$ is defined by the relation
		\begin{equation*}
			\frac{1}{p}-\frac{1}{p_{r,s}^{*}}=\frac{1}{s\, r}.
		\end{equation*}
		
		\item Assume that $a\in SD_p^s(w)$ and $p\ge rs$. Then there exists a constant $c_{n,p}>0$ such that, for every cube $Q$,
		\begin{equation}\label{Thm Ar eq p>s}
			\|f-f_Q\|_{\exp L\left(Q,\frac{w(x)\,dx}{w(Q)}\right)}
			\le
			c_{n,p}\,[w]_{A_\infty}\,[w]_{A_r}^{\frac1p}\,
			\|a\|\,a(Q).
		\end{equation}
	\end{itemize}
\end{theorem}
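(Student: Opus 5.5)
The plan is to turn the $SD_p^s(w)$ condition with $w\in A_r$ into a $D_{q}(w)$ condition for a larger exponent $q$, and then feed this into Theorem \ref{Thm 1.6 CP} (the weak-type $D_q$ self-improvement, which needs $w\in A_\infty$).

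\textbf{Step 1: the $A_r$ smallness transfer.} For $w\in A_r$ and any measurable $E\subset Q$ we have $\left(|E|/|Q|\right)^r\le [w]_{A_r}\, w(E)/w(Q)$. Let $\{Q_j\}$ be disjoint subcubes of $Q$ with union $E$. Then $SD_p^s(w)$ gives
\begin{equation*}
\sum_j a(Q_j)^p w(Q_j)\le \|a\|^p [w]_{A_r}^{\frac{p}{rs}}\left(\frac{w(E)}{w(Q)}\right)^{\frac{p}{rs}} a(Q)^p w(Q).
\end{equation*}

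\textbf{Step 2: upgrade to $D_q(w)$ with $q=p^*_{r,s}$ when $p<rs$.} Set $\theta=p/q=1-\frac{p}{rs}\in(0,1]$. By Hölder's inequality on the sum,
\begin{equation*}
\sum_j a(Q_j)^q w(Q_j)\le \Big(\sum_j a(Q_j)^p w(Q_j)\Big)^{q/p}\cdot \Big(\max_j\ldots\Big)
\end{equation*}
does not quite work directly. I would instead use Step 1 applied to each single cube $Q_j$ separately, i.e.\ $a(Q_j)^p\le \|a\|^p[w]_{A_r}^{p/(rs)} (w(Q_j)/w(Q))^{p/(rs)-1}a(Q)^p$. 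This yields $a(Q_j)^{q-p}\le (\cdots)^{(q-p)/p}$. Multiplying by $a(Q_j)^p w(Q_j)$ and summing, then bounding $\sum_j a(Q_j)^pw(Q_j)$ with the full Step~1, the powers of $w(Q_j)/w(Q)$ and $w(E)/w(Q)$ cancel exactly because $\frac1p-\frac1q=\frac1{rs}$. The result is $\|a\|_{D_q(w)}\lesssim \|a\|_{SD_p^s(w)}[w]_{A_r}^{1/(rs)}$. This exponent bookkeeping is the step I expect to be the main obstacle, so I would verify it carefully; in particular I want the single-cube estimate to produce the factor $[w]_{A_r}^{1/(rs)}$ and not a worse power.

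\textbf{Step 3: conclude.} Applying Theorem \ref{Thm 1.6 CP} with exponent $q=p^*_{r,s}$ (note $q>p\ge1$, and $w\in A_r\subset A_\infty$) gives \eqref{Thm Ar eq} with constant $c_n q[w]_{A_\infty}\|a\|_{D_q(w)}$.

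\textbf{Step 4: the case $p\ge rs$.} Now the single-cube estimate from Step~1 reads $a(Q_j)\le \|a\|[w]_{A_r}^{1/p}a(Q)$, since the exponent of $w(Q_j)/w(Q)$ is nonnegative and that ratio is at most $1$. Combined with the $D_p$ bound from Step~1, this gives $\|a\|_{D_q(w)}\le \|a\|[w]_{A_r}^{1/p}$ for every $q\ge p$, uniformly in $q$. Theorem \ref{Thm 1.6 CP} then yields weak $L^q$ bounds with constant $c_nq\,[w]_{A_\infty}\|a\|[w]_{A_r}^{1/p}a(Q)$ for all large $q$. Since $\|g\|_{L^q}\lesssim q\|g\|_{L^{2q,\infty}}$, we get $L^q$ norms growing linearly in $q$. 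By the standard characterization $\|g\|_{\exp L}\simeq\sup_q q^{-1}\|g\|_{L^q}$, this gives \eqref{Thm Ar eq p>s}, with the $p$-dependence absorbed into $c_{n,p}$.
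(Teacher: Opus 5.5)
\textbf{Step 2 is a genuine gap, and the claim behind it is false.} The single-cube bound gives
\[
a(Q_j)^{q-p}\le \big(\|a\|\,[w]_{A_r}^{\frac{1}{rs}}a(Q)\big)^{q-p}\Big(\frac{w(Q)}{w(Q_j)}\Big)^{\frac{p}{rs}}.
\]
After multiplying by $a(Q_j)^pw(Q_j)$ you are left with $\sum_j (w(Q)/w(Q_j))^{p/(rs)}a(Q_j)^pw(Q_j)$. This has a \emph{negative} power of $w(Q_j)$ inside the sum. To cancel it against the factor $(w(E)/w(Q))^{p/(rs)}$ from Step 1, you would need $w(Q_j)^{-p/(rs)}\lesssim w(E)^{-p/(rs)}$. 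But $w(Q_j)\le w(E)$ gives the reverse inequality, so the cancellation is exact only for one-cube families.

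In fact $SD_p^s(w)\not\subseteq D_{p^*_{r,s}}(w)$ in general; the paper only uses the inclusion $D_{p^*_s}\subseteq SD_p^s$. Here is a counterexample.
\begin{itemize}
\item Take $w\equiv 1$, $r=1$, $q=\frac{sp}{s-p}$, and $a(Q)=\rho(Q)^{-n/q}$ with $\rho(Q)=\max_{x\in\overline{Q}}|x|$.
\item Since $|x|\le \rho(Q_j)$ on $Q_j$, you get $\sum_j a(Q_j)^p|Q_j|\le\int_E |x|^{-np/q}\,dx$.
\item Using $\frac pq+\frac ps=1$: if $\operatorname{dist}(0,Q)\le \ell(Q)$, so that $\rho(Q)\simeq\ell(Q)$, this integral is $\lesssim |E|^{p/s}$. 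Otherwise it is $\lesssim |E|\rho(Q)^{-np/q}\le |E|^{p/s}|Q|^{p/q}\rho(Q)^{-np/q}$.
\item In both cases it is $\lesssim (|E|/|Q|)^{p/s}a(Q)^p|Q|$, so $a\in SD_p^s$.
\item Now take $Q=[0,1]^n$ and $Q_k=[2^{-k},2^{-k+1})^n$, $1\le k\le N$. Every term $a(Q_k)^q|Q_k|$ equals $(2\sqrt n)^{-n}$, so $\sum_k a(Q_k)^q|Q_k|=N2^{-n}a(Q)^q|Q|$ and $\|a\|_{D_q}=\infty$.
\end{itemize}
So Theorem~\ref{Thm 1.6 CP} cannot be applied as a black box.

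\textbf{The missing idea.} What $SD_p^s(w)$ together with $w\in A_r$ does give is a level-set (weak-type) substitute for $D_q$. Suppose all cubes of a disjoint family $\{R_i\}\subset Q$ satisfy $a(R_i)>\lambda$, and let $E=\bigcup_i R_i$. Then
\[
w(E)\le \lambda^{-p}\sum_i a(R_i)^pw(R_i)\le \lambda^{-p}\|a\|^p[w]_{A_r}^{\frac{p}{rs}}\Big(\frac{w(E)}{w(Q)}\Big)^{\frac{p}{rs}}a(Q)^pw(Q).
\]
Since $p<rs$, you can absorb the power of $w(E)$ and get $w(E)/w(Q)\le \big([w]_{A_r}^{1/(rs)}\|a\|a(Q)/\lambda\big)^{p^*_{r,s}}$. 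This suffices because, in the good-lambda proof underlying Theorem~\ref{Thm 1.6 CP}, the functional enters only through $w(\{M_Q^\# f>\gamma t\})=w(\bigcup_i R_i)$. There the maximal cubes satisfy $a(R_i)>\gamma t$ by the starting inequality. This is exactly how the paper argues: it re-runs the good-lambda scheme (splitting $E_{Q_j}\subseteq A_j\cup B_j$ and controlling the $A_j$ via \cite[Corollary 1.4]{CP}) and inserts this estimate for the $B_j$.

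\textbf{Step 4 is sound.} It is a valid alternative to the paper's route, which for each $q>p$ applies the first bullet with $s_0\ge s$ chosen so that $p^*_{r,s_0}=q$, and then uses Proposition~\ref{prop:linearweakLptoexpL}. For $p\ge rs$ the single-cube bound is a uniform bound $a(Q_j)\lesssim a(Q)$, so interpolating it with the $D_p$ bound really does give $D_q(w)$ uniformly in $q$. One correction: Step 1 literally yields the factor $[w]_{A_r}^{1/(rs)}$, not $[w]_{A_r}^{1/p}$. To get $[w]_{A_r}^{1/p}$, combine $SD_p^s$ with \eqref{Ap 2} directly, $a(Q_j)^p\le \|a\|^p[w]_{A_r}(|Q_j|/|Q|)^{p/s-r}a(Q)^p$, and bound the $D_p$ part using $|E|\le |Q|$.
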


Here $\|\cdot\|_{\exp L(X,\mu)}$ denotes the usual Luxemburg norm associated to the Young function $\Phi(t)=e^{t}-1$, namely
\begin{equation*}
\|f\|_{\exp L(X,\mu)} :=\inf\Big\{\lambda>0:\ \int_X \Phi\!\left(\frac{|f(x)|}{\lambda}\right)\,d\mu(x)\le 1\Big\}. 
\end{equation*}

Estimate (\ref{Thm Ar eq}) reflects the compatibility between the $D_p$ condition and the $SD_p^s$ condition. Using H\"older's inequality, we know that $D_{p^*_s} \subseteq SD_p^{s} \subseteq SD_1^{(p^{*}_s)'  }$ with $$\|a\|_{SD_1^{(p^{*}_s)'  }} \le \|a\|_{SD_p^{s}} \le \|a\|_{D_{p^*_s}},$$ 
where $1 \le p < s$ and $p^*_s=\frac{sp}{s-p}$. Applying estimate (\ref{Thm Ar eq}) with $r=1$, one can improve the self-improving result under the condition $a \in SD_p^s$ to the sharp result of the $D_{p^*_s}$ condition, namely the weak-$L^{p^*_s}$ generalized Poincar\'e inequality. Comparing (\ref{Thm Ar eq}) with the estimate ({\ref{results shown in PR and LLO}}) presented in \cite{PR,LLO}, we significantly improve their result when $w$ is an $A_r$ weight.

Estimate \eqref{Thm Ar eq p>s} extends \cite[Corollary 1.9]{PR} in two directions: we do not assume any monotonicity of the functional $a$, and we work in the weighted setting under the more general condition $a\in SD_p^s(w)$.

\begin{remark}
Consider the unweighted setting $w= 1$, so that $w\in A_1$ and we may take $r=1$ in Theorem \ref{Selfimproving Ar}. In the classical Poincaré inequality, we have 
\begin{equation*}
	\frac1{|Q|}\int_Q |f(x)-f_Q|dx \le c_n  \ell(Q)\left(\frac{1}{|Q|}\int_Q |\nabla f(x)|^pdx\right)^{\frac{1}{p}},
\end{equation*}
and the corresponding functional belongs to $SD^{n}_p$; thus $s=n$ is the natural ``dimension'' in this toy model situation. Then for $1\le p<s=n$, the exponent $p_{1,n}^{*}$ in \eqref{Thm Ar eq} becomes the classical critical Sobolev exponent $p^*=\frac{np}{n-p}$. On the other hand, in the borderline and supercritical regime $p\ge n$ (equivalently $p\ge rs$ with $r=1$ and $s=n$), Theorem \ref{Selfimproving Ar} gives us the exponential endpoint \eqref{Thm Ar eq p>s}, which is consistent with the expected classical Trudinger-type behavior when $p\ge n$. 
\end{remark}

 \begin{remark}\label{rem:sharpness-Ar}
The exponent $p_{r,s}^{*}$ in \eqref{Thm Ar eq} is also sharp in the range $1 \le r \le p$ and $s \ge n$. Indeed, in Section \ref{good lambda} we construct weights $w\in A_r$ and smooth test functions for which the conclusion of \eqref{Thm Ar eq} fails if one replaces $p_{r,s}^{*}$ by any larger exponent in those cases; see Proposition~\ref{counterexample proposition} and Proposition~\ref{prop: counterexample fractional p=1}.
\end{remark}

In Appendix \ref{S Polynomials}, we extend Theorem \ref{Selfimproving Ar} to higher-order oscillations by replacing the average $f_Q$, in both the hypothesis and the conclusion, with an appropriate polynomial $P_Q^m f$. This polynomial denotes the projection of $f$ onto the space of polynomials of degree $m$ in $n$ variables over $Q$, leading to corollaries of Poincaré-type inequalities involving higher-order derivatives.  In Appendix \ref{S vector}, we also extend the results of Theorem \ref{Selfimproving Ar} to vector-valued generalized Poincaré inequalities. Specifically, we consider functions $f: \mathbb{R}^n \longrightarrow \ell_q$, where the oscillation in both the hypothesis and the conclusion is measured with respect to the $\ell_q$-norm, appearing as $\norm{f-f_Q}_{\ell_q}$. Finally, in Appendix \ref{S Rectangles}, we extend the result to rectangles in $\R^n$.  

For the $A_\infty$ class of weights, we can prove the following result. 

\begin{theorem}\label{Selfimproving Ainfty}
	Let $w\in A_\infty$, let $1<p<\infty$ and let $a$ be a functional over cubes. Let $f:\R^n \longrightarrow \R$ be a locally integrable function such that
		\begin{equation}\label{starting-point}
			\frac{1}{|Q|} \int_Q |f(x)-f_Q|dx \le a(Q)
		\end{equation}
		for every cube $Q$.
		If $a\in SD_p^s(w)$ for $s>0$, then for every cube $Q$,
			\begin{equation*}
				\norm{f-f_Q}_{\strt{2.3ex}L^{p\left(1+\frac{1}{s}\right),\infty}\left(Q, \frac{w(x)dx}{w(Q)}\right)} \le c_n p \left(1+\frac{1}{s}\right) [w]_{A_\infty} \left\| a\right\|^\frac{s}{s+1} a(Q).
			\end{equation*}	
\end{theorem}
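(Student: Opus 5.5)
Our plan is to reduce Theorem \ref{Selfimproving Ainfty} to Theorem \ref{Thm 1.6 CP} by building a new functional that satisfies a $D_q(w)$ condition with $q=p(1+\frac1s)$. Set
\begin{equation*}
\tilde a(Q):=\sup_{Q'\subseteq Q}\ a(Q')\left(\frac{w(Q')}{w(Q)}\right)^{\frac{1}{q}}
\end{equation*}
(or the equivalent supremum over dyadic subcubes, with constants absorbed into $c_n$). Then trivially $\tilde a(Q)\ge a(Q)$, so the hypothesis \eqref{starting-point} holds with $\tilde a$ in place of $a$. What has to be shown is that $\tilde a\in D_q(w)$ with $\|\tilde a\|_{D_q(w)}\lesssim \|a\|^{\frac{s}{s+1}}$ and that $\tilde a(Q)\lesssim \|a\|^{\frac{s}{s+1}} a(Q)$. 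Granting this, Theorem \ref{Thm 1.6 CP} with exponent $q$ yields the bound $c_n q[w]_{A_\infty}\|\tilde a\|_{D_q(w)}\tilde a(Q)$. This has the right form, except that the powers of $\|a\|$ would multiply. So the plan is to be careful that only one factor $\|a\|^{\frac{s}{s+1}}$ survives, or to normalize the functional so that one of the two estimates holds with constant $1$.

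The core step is an interpolation between the $SD^s_p(w)$ condition and the trivial bound that a single subcube carries. Take disjoint $Q_j\subseteq Q$ and put $\theta=\frac{|\bigcup Q_j|}{|Q|}$. Applied to the single cube $Q'$, the $SD_p^s(w)$ condition gives
\begin{equation*}
a(Q')^p w(Q')\le \|a\|^p \left(\frac{|Q'|}{|Q|}\right)^{p/s} a(Q)^p w(Q).
\end{equation*}
Applied to the whole family, it gives $\sum_j a(Q_j)^p w(Q_j)\le \|a\|^p\theta^{p/s}a(Q)^pw(Q)$. To bound $\sum_j a(Q_j)^q w(Q_j)$ we write $a(Q_j)^q=a(Q_j)^p\,a(Q_j)^{q-p}$, with $q-p=p/s$. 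The extra factor $a(Q_j)^{p/s}$ must be controlled by something like $\|a\|^{p/s}\bigl(|Q_j|/|Q|\bigr)^{p/s^2}\bigl(w(Q)/w(Q_j)\bigr)^{1/s}a(Q)^{p/s}$. This naive route introduces a ratio $w(Q)/w(Q_j)$ that is not summable. A cleaner alternative is a level-set splitting: for $\lambda>0$, separate the cubes where $a(Q_j)\le \lambda a(Q)$ from those where it is larger. For the large ones, the $SD$ condition applied to that subfamily bounds the measure they occupy by roughly $(\|a\|/\lambda)^{s}$ in normalized form. Summing over dyadic levels $\lambda=2^k$ produces the exponent gain $q=p(1+1/s)$, and optimizing the threshold produces the factor $\|a\|^{\frac{s}{s+1}}$. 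The same computation shows $\tilde a(Q)\lesssim a(Q)$ up to that factor.

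The main obstacle is the mismatch between Lebesgue measure in the smallness factor of $SD^s_p(w)$ and the measure $w$ in the sums. Since $w$ is only $A_\infty$, we cannot convert $|E|/|Q|$ into $w(E)/w(Q)$ without losing powers of $[w]_{A_\infty}$, and the statement allows only one such power, which comes from Theorem \ref{Thm 1.6 CP}. We would therefore first try to arrange the interpolation entirely in terms of Lebesgue proportions: in the level-set argument, bound the number of cubes at a given level using $|Q_j|$, while the weighted sums use only the $SD$ inequality itself. If that fails, the fallback is to bypass Theorem \ref{Thm 1.6 CP} and rerun its proof, a Calderón--Zygmund/good-$\lambda$ argument on level sets of $f-f_Q$. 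In that version the Lebesgue smallness factor $\theta^{p/s}$ is applied directly to the Calderón--Zygmund cubes at each height, and $[w]_{A_\infty}$ enters once, through the exponential decay of $w$ on those level sets.
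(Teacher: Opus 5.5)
There is a genuine gap: the reduction to Theorem \ref{Thm 1.6 CP} via $\tilde a$ cannot work. If some $\tilde a$ satisfied $a\le\tilde a\le Ca$ and $\tilde a\in D_q(w)$, with $q=p(1+\frac1s)$, then $a$ itself would lie in $D_q(w)$. But $SD_p^s(w)$ does not imply $D_q(w)$ for $A_\infty$ weights. Take $a(Q)=\ell(Q)^{n/s}\bigl(|Q|/w(Q)\bigr)^{1/p}$. For every weight,
\begin{equation*}
\sum_j a(Q_j)^pw(Q_j)=\sum_j|Q_j|^{1+p/s}\le\Bigl|\bigcup_jQ_j\Bigr|^{p/s}|Q|,
\end{equation*}
so $a\in SD_p^s(w)$ with constant $1$. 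Now let $w(x)=|x|^\beta$ with $\beta>n(q+s)$, which is an $A_\infty$ weight, and take $Q=[0,1]^n$ and the dyadic subcube $Q'=[0,\epsilon]^n$ with $\epsilon=2^{-k}$. Then
\begin{equation*}
\frac{a(Q')^q\,w(Q')}{a(Q)^q\,w(Q)}\simeq \epsilon^{\frac{nq+ns-\beta}{s}}\longrightarrow\infty\qquad(\epsilon\to0).
\end{equation*}
So your $\tilde a(Q)$, dyadic version or not, is infinite, and even the single-cube $D_q(w)$ bound fails. The level-set splitting does not repair this. Applied to the cubes with $a(Q_j)>\lambda a(Q)$, $SD_p^s(w)$ only gives $w(E_\lambda)/w(Q)\le(\|a\|/\lambda)^p(|E_\lambda|/|Q|)^{p/s}$, which bounds $w$-measure by Lebesgue measure. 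Its target $\sum_ja(Q_j)^qw(Q_j)\lesssim a(Q)^qw(Q)$ is false anyway, as the one-cube family $\{Q'\}$ shows. Any argument using only $a$ and $w$ runs into exactly the Lebesgue/weighted mismatch you identified.

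The missing idea is to control the Lebesgue factor in $SD_p^s(w)$ through $f$, not through $a$ or $w$. In the good-$\lambda$ scheme of Theorem \ref{Selfimproving Ar}, the hypothesis on $a$ enters only through
\begin{equation*}
w\Bigl(\bigcup_iR_i\Bigr)\le(\gamma t)^{-p}\|a\|^p\Bigl(\Bigl|\bigcup_iR_i\Bigr|/|Q|\Bigr)^{p/s}a(Q)^pw(Q).
\end{equation*}
Here the $R_i$ are the maximal dyadic cubes with $\frac1{|R_i|}\int_{R_i}|f-f_{R_i}|>\gamma t$, so $a(R_i)>\gamma t$. Since $\frac1{|R_i|}\int_{R_i}|f-f_{R_i}|\le\frac2{|R_i|}\int_{R_i}|f-f_Q|$, disjointness and the hypothesis on $Q$ give $|\bigcup_iR_i|\le\frac{2}{\gamma t}a(Q)|Q|$. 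This is an unweighted Chebyshev bound with no weight comparison. Hence $w(\bigcup_iR_i)\le 2^{p/s}\|a\|^p\bigl(a(Q)/(\gamma t)\bigr)^{q}w(Q)$. The iteration with $\varphi(N)=\sup_{0<t\le N}t^qw(\Omega_t)$ and $\gamma^{-1}\simeq q[w]_{A_\infty}$ then closes. Moreover, $\|a\|$ enters only as $\|a\|^p$, which becomes $\|a\|^{p/q}=\|a\|^{\frac{s}{s+1}}$ after taking the $q$-th root, so no product of powers arises.

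Your fallback names the good-$\lambda$ framework but never says how the Lebesgue proportion is bounded. If you instead apply $SD_p^s(w)$ to the Calder\'on--Zygmund cubes of $|f-f_Q|$, as in Theorem \ref{Thm1}, you need an a priori weak-$L^p$ bound on each cube. Supplying it from Theorem \ref{Thm 1.6 CP} yields a constant of order $(p[w]_{A_\infty})^{\frac{s}{s+1}}\|a\|^{\frac{2s}{s+1}}$, not the stated one.
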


\begin{remark}
	Formally, the condition $SD_p^s(w)$ with $s=\infty$ coincides with $D_p(w)$. Our result recovers the sharp $D_p$ type self-improving result Theorem \ref{Thm 1.6 CP}, with the same constant.
\end{remark}

Comparing with the estimate (\ref{Thm Ar eq}), Theorem \ref{Selfimproving Ainfty} provides a better result under the assumption $ r > p\left(1+\frac{1}{s}\right)$, in the sense that the norm on the left-hand side is larger. 

We also provide a self-improving result for {\it general} weights. 

\begin{theorem}\label{Thm1}
	Let $0< p <\infty$ and let $w$ be any weight. Consider a functional $a$ satisfying the $SD^s_p(w)$ condition with $s>0$ and constant $\left\| a \right\| $. Let $f$ be a locally integrable function such that the following two conditions are satisfied: 
		\begin{equation}\label{Thm1 1}
			\frac{1}{|Q|} \int_Q |f(x)-f_Q|dx \le a(Q)
		\end{equation}
		and,
		\begin{equation}\label{Thm1 2}
			\left\| f - f_Q\right\| _{\strt{2.3ex} L^{p,\infty}\left( Q, \frac{w(x)dx}{w(Q)}\right) } \le K a(Q),
		\end{equation}
		for every cube $Q$, where $K$ is a constant independent of $Q$. Then, there exists a dimensional constant $c_n>0$ such that for any cube $Q$, 
		\begin{equation*} 
			\left\| f - f_Q\right\| _{\strt{2.3ex} L^{p\left(1+\frac{1}{s}\right),\infty}\left( Q, \frac{w(x)dx}{w(Q)}\right) } \le c_n  \max \left\lbrace 1, K \left\| a \right\| \right\rbrace ^\frac{s}{s+1} a(Q).
		\end{equation*}
\end{theorem}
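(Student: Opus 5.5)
The plan is a single Calderón--Zygmund stopping-time argument at one scale, with no iteration. Hypothesis \eqref{Thm1 2} controls the level sets inside each stopping cube. The smallness factor in the $SD_p^s(w)$ condition then contributes the extra power $(a(Q)/\lambda)^{p/s}$. Throughout, fix a cube $Q$ and set $q=p(1+\frac1s)$. Since $w(E)/w(Q)\le 1$, we have trivially
\begin{equation*}
\lambda\Big(\frac{w(\{x\in Q:|f-f_Q|>\lambda\})}{w(Q)}\Big)^{1/q}\le \lambda ,
\end{equation*}
so it suffices to treat $\lambda> 2^{n+1}a(Q)$. Smaller $\lambda$ contribute at most $2^{n+1}a(Q)$, which is absorbed by the factor $\max\{1,K\|a\|\}^{s/(s+1)}$.

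For such $\lambda$ put $t=\lambda/2^{n+1}$, so that $t>a(Q)\ge \frac1{|Q|}\int_Q|f-f_Q|$ by \eqref{Thm1 1}. Let $\{Q_j\}$ be the maximal dyadic subcubes of $Q$ satisfying $\frac1{|Q_j|}\int_{Q_j}|f-f_Q|>t$. These cubes have the following properties:
\begin{itemize}
\item they are pairwise disjoint;
\item $|\bigcup_j Q_j|\le \frac{1}{t}\int_Q|f-f_Q|\le \frac{a(Q)}{t}|Q|$;
\item by maximality, $|f_{Q_j}-f_Q|\le 2^n t=\lambda/2$;
\item by Lebesgue differentiation, $|f-f_Q|\le t<\lambda$ a.e.\ outside $\bigcup_jQ_j$. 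This null set is also $w$-null, since $w\,dx$ is absolutely continuous.
\end{itemize}
Consequently,
\begin{equation*}
\{x\in Q:|f-f_Q|>\lambda\}\subset \bigcup_j\{x\in Q_j:|f-f_{Q_j}|>\lambda/2\}
\end{equation*}
up to a null set.

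Next apply \eqref{Thm1 2} on each $Q_j$, which gives $w(\{x\in Q_j:|f-f_{Q_j}|>\lambda/2\})\le (2Ka(Q_j)/\lambda)^p w(Q_j)$. Summing over $j$ and using $SD_p^s(w)$ together with the measure bound on $\bigcup_jQ_j$, we get
\begin{equation*}
\frac{w(\{x\in Q:|f-f_Q|>\lambda\})}{w(Q)}\le \Big(\frac{2K}{\lambda}\Big)^p\|a\|^p\Big(\frac{2^{n+1}a(Q)}{\lambda}\Big)^{p/s}a(Q)^p
= c_n^{p/s}2^p (K\|a\|)^p\Big(\frac{a(Q)}{\lambda}\Big)^{q}.
\end{equation*}
Taking the power $1/q$ and multiplying by $\lambda$ gives a bound $c_n'(K\|a\|)^{p/q}a(Q)=c_n'(K\|a\|)^{s/(s+1)}a(Q)$. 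Here the constants $2^{p/q}\le 2$ and $c_n^{p/(sq)}=c_n^{1/(s+1)}\le c_n$ are dimensional, uniformly in $p$ and $s$. Taking the supremum over $\lambda$ and combining with the small-$\lambda$ case yields the stated estimate with $\max\{1,K\|a\|\}^{s/(s+1)}$.

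The main point requiring care is the stopping-time step. The stopping height must exceed $a(Q)$, so that the decomposition is nontrivial and yields the measure estimate $|\bigcup Q_j|/|Q|\le a(Q)/t$ that feeds the $SD$ factor. The jump bound $2^nt$ must also be at most $\lambda/2$, so that the weak-type hypothesis \eqref{Thm1 2} can be applied on each $Q_j$ at level $\lambda/2$ without any $A_\infty$ information on $w$. Choosing $t=2^{-n-1}\lambda$ meets both requirements, and the remaining bookkeeping only checks that all constants stay dimensional.
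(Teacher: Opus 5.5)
Your proposal is correct and follows the paper's proof essentially step for step. The paper also uses a single local Calder\'on--Zygmund decomposition of $|f-f_Q|$ at a height $t>a(Q)$, the bound $|\bigcup_j Q_j|/|Q|\le a(Q)/t$, the weak-type hypothesis on each $Q_j$, the $SD_p^s(w)$ condition, and a trivial bound for small levels. The only difference is bookkeeping: the paper compares levels $\kappa t$ and $t$ with $\kappa=2^n+1$, whereas you compare $\lambda$ and $\lambda/2$ with $t=\lambda/2^{n+1}$, and you track the uniformity of the constants in $p$ and $s$ a bit more explicitly.
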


Condition \eqref{Thm1 2} is fairly mild and holds for a wide range of practical cases. Indeed, there are at least two standard ways to verify it. First, if $w\in A_\infty$, one may invoke Theorem \ref{Thm 1.6 CP} to obtain a weighted weak-$L^p$ estimate. This yields a conclusion of the same qualitative nature as Theorem \ref{Selfimproving Ainfty}, albeit with a better dependence on $[w]_{A_\infty}$ (and, in contrast, a worse dependence on $\|a\|$). Second, for completely general weights, one may appeal to Theorem \ref{Thm 5.3 LLO}, which provides a weighted strong $L^p$ generalized Poincaré inequality. This immediately implies \eqref{Thm1 2} with $K = c_n(1+s)\|a\|$. Therefore, Theorem \ref{Thm1} applies under very mild additional input, and as an immediate corollary, we recover and sharpen the main result of~\cite{PR} and its recent refinement in~\cite{LLO}.

\subsection{Weighted Poincaré--Sobolev inequalities}\label{Section PS}

As a first application of our main self-improving result, Theorem \ref{Selfimproving Ar}, we recover the following weighted weak Poincaré-Sobolev estimate: for every $w\in A_r$ with $1\le r\le p$ and $1\le p <nr$ we have 
\begin{equation}\label{self improving example 1}
	\norm{f-f_Q}_{\strt{2.3ex} L^{p_r^{*},\infty}\left(Q, \frac{w(x)\,dx}{w(Q)}\right)} \le c_n p^*_r [w]_{A_\infty}\,[w]_{A_p}^{\frac{1}{p}}\,[w]_{A_r}^{\frac{1}{rn}}\, \ell(Q) \left( \frac{1}{w(Q)}\int_{Q}|\nabla f(x)|^p\, w(x)\,dx\right)^{\frac{1}{p}},
\end{equation}
for every cube $Q$, where $p^*_r$ is defined by the relation
\begin{equation}\label{eq:p*r Section 1.2}
	\frac{1}{p}-\frac{1}{p_r^{*}}=\frac{1}{n }\frac{1}{r}.
\end{equation}

\begin{remark}
The exponent $p_r^{*}$ on the left-hand side of \eqref{self improving example 1} is sharp within the class $A_r$. We prove this in Proposition \ref{counterexample proposition} below; see also \cite[Theorem 2.17]{ClarosJFA} for related sharpness results.
\end{remark}

Furthermore, the weak norm in the left-hand side of \eqref{self improving example 1} can be strengthened to a Lorentz norm by means of a standard truncation argument (see Section \ref{Section 2}).

\begin{corollary}\label{thm:PS-Lorentz-strong} 
Let $w\in A_r$ with $1\le r\le p$, $1\le p< nr$ and consider $p_r^{*}$ defined in \eqref{eq:p*r Section 1.2}. Then, there exists a dimensional constant $c_n>0$ such that 
\begin{equation}\label{eq:PS-Lorentz-strong}
	\|f-f_{Q}\|_{\strt{2.3ex}L^{p_r^{*},p}\left(Q, \frac{w(x)\,dx}{w(Q)}\right)} \le c_n p^*_r [w]_{A_\infty}\,[w]_{A_p}^{\frac{1}{p}}\,[w]_{A_r}^{\frac{1}{rn}}\, \ell(Q) \left( \frac{1}{w(Q)}\int_{Q}|\nabla f(x)|^p\, w(x)\,dx\right)^{\frac{1}{p}},
\end{equation}
for every cube $Q$. 
\end{corollary}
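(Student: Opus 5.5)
We deduce Corollary \ref{thm:PS-Lorentz-strong} from the weak estimate \eqref{self improving example 1} by the truncation method of Maz'ya type. The point is that the right-hand side of \eqref{self improving example 1} comes from a functional built from $|\nabla f|$, and this functional behaves well under truncation.

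\textbf{Step 1: the weak estimate.} Fix a cube $Q$ and write $\mu=w\,dx/w(Q)$. The starting inequality \eqref{starting-point0} holds with
\[
a(Q)=c_n[w]_{A_p}^{1/p}\ell(Q)\Big(\frac{1}{w(Q)}\int_Q|\nabla f|^p w\Big)^{1/p}.
\]
This follows from the unweighted $(1,1)$ Poincaré inequality \eqref{I1 P11} together with $\frac{1}{|Q|}\int_Q|g|\le [w]_{A_p}^{1/p}\big(\frac{1}{w(Q)}\int_Q|g|^pw\big)^{1/p}$, which is Hölder's inequality plus the $A_p$ condition. By \cite[Lemma 3.2]{PR}, taking $\nu=|\nabla f|^pw\,dx$, this functional belongs to $SD_p^{n}(w)$ with dimensional norm. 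Since $w\in A_r$ and $p<nr$, the first part of Theorem \ref{Selfimproving Ar} with $s=n$ gives \eqref{self improving example 1} for every Sobolev function $f$.

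\textbf{Step 2: truncation.} Subtracting a constant, we may pass from $f-f_Q$ to $f-m$, where $m$ is a median of $f$ on $Q$ with respect to $\mu$. This costs only a constant factor, because $\|f-f_Q\|\lesssim\|f-m\|$ in any rearrangement-invariant quasi-norm. The reason is $|m-f_Q|\le\|f-f_Q\|_{L^1(\mu)}$ up to a constant, and the $L^1(\mu)$ norm is controlled by the weak norm.

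Treat $(f-m)^+$ and $(f-m)^-$ separately. For $k\in\mathbb Z$ define the truncations
\[
f_k=\min\{\max\{f-m-2^k,0\},2^k\}.
\]
Each $f_k$ is again Sobolev, with $|\nabla f_k|=|\nabla f|\chi_{E_k}$, where $E_k=\{2^k<f-m\le 2^{k+1}\}$. Apply Step 1 to $f_k$. Because $f_k$ vanishes on a set of $\mu$-measure at least $1/2$ (by the median property), the weak norm of $f_k$ itself is comparable to that of $f_k-(f_k)_Q$. Since $f_k=2^k$ on $\{f-m>2^{k+1}\}$, this gives
\[
2^k\mu(\{f-m>2^{k+1}\})^{1/p_r^*}\lesssim C_w\,\ell(Q)\Big(\frac{1}{w(Q)}\int_{E_k}|\nabla f|^pw\Big)^{1/p},
\]
where $C_w$ denotes the constant appearing in \eqref{self improving example 1}.

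\textbf{Step 3: summation.} Raise to the power $p$ and sum over $k$. The sets $E_k$ are disjoint, so the right-hand sides add up to at most the full integral over $Q$. The left-hand side,
\[
\sum_k 2^{kp}\mu(\{f-m>2^{k+1}\})^{p/p_r^*},
\]
is comparable to $\|(f-m)^+\|_{L^{p_r^*,p}(\mu)}^p$ by the dyadic description of the Lorentz quasi-norm. Doing the same for $(f-m)^-$ yields \eqref{eq:PS-Lorentz-strong}.

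The main point to check is that the constant survives unchanged. It does, because the constant in Step 1 does not depend on $f$, and the median normalization is what allows us to discard the averages $(f_k)_Q$. The remaining steps are routine.
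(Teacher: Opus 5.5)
Your argument is sound and has the same architecture as the paper's. Step 1 is exactly how the paper obtains \eqref{self improving example 1}: the functional in \eqref{weight 1,1 Poincare inequality}, then \cite[Lemma 3.2]{PR}, then Theorem \ref{Selfimproving Ar} with $s=n$. After that, the paper simply invokes Theorem \ref{thm:truncation-method}.

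The genuine difference is how the unweighted average is handled in the truncation. The paper follows \cite{FPW98}. It truncates $|f-f_Q|$ only at levels above a threshold comparable to $\frac{1}{|Q|}\int_Q|f-f_Q|$, which is why Theorem \ref{thm:truncation-method} also assumes an unweighted $L^1$ Poincaré inequality. Above the threshold the Lebesgue averages of the truncations are a small fraction of the level, and the levels below it are bounded trivially.

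You instead normalize by a $\mu$-median $m$. Every $f_k$ vanishes on a set of $\mu$-measure at least $1/2$, so $\|f_k\|_{L^{p_r^*,\infty}(\mu)}$ is controlled by $\|f_k-c\|_{L^{p_r^*,\infty}(\mu)}$ for \emph{any} constant $c$. You then return to $f_Q$ through $|m-f_Q|\le 2^{1/p_r^*}\|f-f_Q\|_{L^{p_r^*,\infty}(\mu)}$, which Step 1 bounds. That is the correct reading of the first paragraph of your Step 2. The literal claim $\|f-f_Q\|\lesssim\|f-m\|$ in every rearrangement-invariant quasi-norm is neither needed nor true in general, since $f_Q$ is a Lebesgue average.

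Your device uses only the weak estimate. It therefore shows that the $L^1$ hypothesis of Theorem \ref{thm:truncation-method} is superfluous, and, with Lemma \ref{Lemma Lorentz} for the summation, it works verbatim for general $\mu,\nu$ and $L^{p,r}(\nu)$ data. The price is splitting into $(f-m)^{\pm}$. Here the $L^1$ inequality is free anyway, since it is the starting inequality, so both routes give the corollary.

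One caveat concerns your last paragraph, where you say the constant survives unchanged. With the normalization \eqref{Lpq quasinorm} one has $\|g\|_{L^{q,p}(\mu)}^p=q\int_0^\infty t^{p-1}\mu(\{|g|>t\})^{p/q}\,dt$. So the dyadic description you invoke reads
\[
\|g\|_{L^{q,p}(\mu)}\le 4\Big(\frac{q}{p}\Big)^{\frac1p}\Big(\sum_k 2^{kp}\mu(\{|g|>2^{k+1}\})^{\frac pq}\Big)^{\frac1p},
\]
and the factor $(q/p)^{1/p}$ cannot be dropped (test $g=\chi_E$). It reappears as $\|1\|_{L^{q,p}(\mu)}=(q/p)^{1/p}$ when you pass from $m$ back to $f_Q$.

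With $q=p_r^*$ this factor is $\big(\frac{nr}{nr-p}\big)^{1/p}$, which is not bounded by a dimensional constant as $p\to nr$. So what your argument actually proves is \eqref{eq:PS-Lorentz-strong} with an extra factor $(p_r^*/p)^{1/p}$. The paper's appeal to Theorem \ref{thm:truncation-method}, whose constant $10$ is stated as absolute, suppresses the same factor. Your proof is therefore on exactly the same footing as the paper's, and the factor is harmless as long as $p$ stays away from $nr$.
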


\begin{remark}{\label{remark of the sharpness on Lorentz}}
The second Lorentz index $p$ in the target space $L^{p_r^{*},p}$ in \eqref{eq:PS-Lorentz-strong} is natural from the viewpoint of Sobolev-Lorentz embeddings. Already in the unweighted case $w=1$ (hence $r=1$ and $p_r^{*}=p^*$), the endpoint embedding $W^{1,p}\hookrightarrow L^{p^*,p}$ is optimal along the Lorentz scale (and, more generally, within rearrangement-invariant spaces), in the sense that $W^{1,p}\not\hookrightarrow L^{p^*,q}$ for any $q<p$ (see \cite{EKP}). Consequently, we should not expect a general improvement of \eqref{eq:PS-Lorentz-strong} with $L^{p_r^{*},q}$, $q<p$, even in the simplest situation.
\end{remark}

Furthermore, the $L^{p_r^*,\infty}$ norm on the left-hand side of \eqref{self improving example 1}  can be maintained while sharpening the right-hand side; this is achieved by replacing the strong $L^p$ norm of the gradient with the smaller Lorentz quasi-norm $L^{p,p_r^*}$. This improvement is made precise in the next result.

\begin{corollary}\label{thm:PS-Lorentz-weak}
Let $w\in A_r$ with $1\le r\le p$, $1  < p< nr$ and consider $p_r^{*}$ defined in \eqref{eq:p*r Section 1.2}. Then, there exists a dimensional constant $c_n>0$ such that 
\begin{equation}\label{eq:PS-Lorentz-weak}
	\|f-f_Q\|_{\strt{2.3ex} L^{p_r^{*},\infty}\left(Q, \frac{w(x)\,dx}{w(Q)}\right)} \le c_n p^*_r [w]_{A_\infty}\,[w]_{A_{p,p_r^{*}}}^{\frac{1}{p}}\,[w]_{A_r}^{\frac{1}{rn}}\, \ell(Q) \,\|\nabla f\|_{L^{p,p_r^{*}}\left(Q,\frac{w(x)\,dx}{w(Q)}\right)} ,
\end{equation}
for every cube $Q$, where $[w]_{A_{p,p_r^{*}}}$ denotes the characteristic constant associated with the class $A_{p,p_r^{*}}$ of Chung, Hunt, and Kurtz, as defined in Definition \ref{def:CHK}. 
\end{corollary}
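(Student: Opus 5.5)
Apply the first part of Theorem \ref{Selfimproving Ar} with $s=n$ to a Lorentz-type functional. The natural choice is
$$a(Q)=c_n[w]_{A_{p,p_r^*}}^{1/p}\,\ell(Q)\,\|\nabla f\|_{L^{p,p_r^*}\left(Q,\frac{w\,dx}{w(Q)}\right)}.$$
Two facts then need to be verified. First, the starting inequality $\frac1{|Q|}\int_Q|f-f_Q|\le a(Q)$ must hold. Second, $a\in SD_p^{n}(w)$ with $\|a\|\le c_n$. Once both hold, \eqref{Thm Ar eq} with $s=n$ gives $p^*_{r,n}=p_r^*$ and the constant $c_np_r^*[w]_{A_\infty}[w]_{A_r}^{1/(rn)}\|a\|\,a(Q)$, which is exactly \eqref{eq:PS-Lorentz-weak}. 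The hypothesis $p<rs=rn$ is assumed.

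\textbf{Starting inequality.} Begin with the unweighted $(1,1)$ Poincaré inequality \eqref{I1 P11}, $\frac1{|Q|}\int_Q|f-f_Q|\le c_n\ell(Q)\frac1{|Q|}\int_Q|\nabla f|$. Then bound $\frac{1}{|Q|}\int_Q|g|$ by the weighted Lorentz norm. The Chung--Hunt--Kurtz $A_{p,q}$ condition (Definition \ref{def:CHK}) is precisely the condition making the averaging operator $g\mapsto \frac{1}{|Q|}\int_Q |g|\,\chi_Q$ bounded from $L^{p,q}(w)$ to $L^{p,\infty}(w)$, or to $L^{p,q}(w)$, uniformly in $Q$. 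Testing on $Q$ gives
$$\frac{1}{|Q|}\int_Q|g|\,w(Q)^{1/p}\le [w]_{A_{p,p_r^*}}^{1/p}\|g\chi_Q\|_{L^{p,p_r^*}(w)}.$$
Normalizing by the probability measure $w\,dx/w(Q)$ gives the claim. The power $1/p$ on the constant should match the paper's normalization of $[w]_{A_{p,q}}$; I would check this against Definition \ref{def:CHK}.

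\textbf{The $SD_p^n(w)$ condition.} This is the main obstacle. Fix disjoint $Q_j\subset Q$, set $h=|\nabla f|\chi_Q$, and let $\mu=w\,dx$. The quantity to bound is
$$\sum_j \ell(Q_j)^p\|h\chi_{Q_j}\|_{L^{p,q}(\mu)}^p w(Q_j)^{-1}w(Q_j),\qquad q=p_r^*\ge p.$$
Use $\ell(Q_j)^p\le \ell(Q)^p(|\bigcup Q_j|/|Q|)^{p/n}$, which follows from $|Q_j|\le|\bigcup_i Q_i|$. It then suffices to show
$$\sum_j\|h\chi_{Q_j}\|_{L^{p,q}(\mu)}^p\le C\,\|h\|_{L^{p,q}(\mu)}^p.$$
This is the superadditivity of $\|\cdot\|^p_{L^{p,q}}$ over disjoint supports, which holds exactly when $q\ge p$, with constant depending only on $p$ and $q$. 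To prove it, write $\|g\|_{L^{p,q}}^p=\|\,|g|^p\|_{L^{1,q/p}}$ and use that $L^{1,q/p}$ with $q/p\ge1$ satisfies a lower $1$-estimate: for disjointly supported functions, the sum of quasinorms is at most $C$ times the quasinorm of the sum, by the Lorentz $\ell^{q/p}$-type concavity. Keeping this constant dimensional, or at least harmless, is the delicate point, so I would verify that it is bounded by an absolute constant for $q\ge p$.

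Note that the restriction $p>1$ is needed for the Lorentz space $L^{p,p_r^*}$ and the CHK theory. Combining the three steps finishes the proof.
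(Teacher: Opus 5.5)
The gap is in your verification of $a\in SD_p^n(w)$. After replacing each $\ell(Q_j)^p$ by $|\bigcup_iQ_i|^{p/n}$, you need
\[
\sum_j\|h\chi_{Q_j}\|^p_{L^{p,q}(w)}\le C\|h\|^p_{L^{p,q}(w)}\quad\text{with } q=p_r^*>p,
\]
and this inequality is false. For disjoint pieces, $\sum_j\|h\chi_{E_j}\|^\alpha_{L^{p,q}}\le\|h\|^\alpha_{L^{p,q}}$ holds for $\alpha\ge\max\{p,q\}$ (Lemma \ref{Lemma Lorentz}). So when $q>p$ the available power is $q$, not $p$. A lower $p$-estimate holds when $q\le p$, which is the opposite of what you claim. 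Likewise, $L^{1,q/p}$ with $q/p>1$ has no lower $1$-estimate (think of $L^{1,\infty}$).

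Concretely, on any measure space let $h=2^j$ on disjoint sets $E_j$ with measure $2^{-jp}$, for $1\le j\le N$. Then $\|h\chi_{E_j}\|_{L^{p,q}}=(p/q)^{1/q}$ for every $j$, so your left-hand side is comparable to $N$. On the other hand, summing over dyadic levels gives $\|h\|^q_{L^{p,q}}\simeq N$, so $\|h\|^p_{L^{p,q}}\simeq N^{p/q}$, and the ratio $N^{1-p/q}$ is unbounded. This failure is exactly why Lemma \ref{Lemma Lpq SDp} with $M=\infty$ (which gives $SD_p^n(w)$) requires $q\le p$.

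The conclusion $a\in SD_p^n(w)$ is nevertheless true. The lossy step is throwing away the sizes $|Q_j|$. Keep them and apply H\"older with exponents $q/p$ and $q/(q-p)$. From $\frac1p-\frac1q=\frac1{nr}$ one gets $\frac{pq}{n(q-p)}=r$ and $\frac{q-p}{q}=\frac{p}{nr}$, hence
\begin{equation*}
\sum_j |Q_j|^{\frac pn}\|\nabla f\|^p_{L^{p,q}(Q_j,w)}\le\Big(\sum_j|Q_j|^r\Big)^{\frac{p}{nr}}\Big(\sum_j\|\nabla f\|^q_{L^{p,q}(Q_j,w)}\Big)^{\frac pq}\le\Big|\bigcup_jQ_j\Big|^{\frac pn}\|\nabla f\|^p_{L^{p,q}(Q,w)}.
\end{equation*}
The last step uses $r\ge1$ and Lemma \ref{Lemma Lorentz} with $\alpha=q$. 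Since $a(Q_j)^pw(Q_j)$ is a fixed multiple of $\ell(Q_j)^p\|\nabla f\|^p_{L^{p,q}(Q_j,w)}$, this is the $SD_p^n(w)$ condition with $\|a\|\le 1$, valid for any weight.

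Your starting inequality is exactly \eqref{Apq}, obtained from the Lorentz H\"older inequality. Theorem \ref{Selfimproving Ar} with $s=n$ then yields \eqref{eq:PS-Lorentz-weak} with the stated constant.

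The paper argues differently. It takes $M=1$ in Lemma \ref{Lemma Lpq SDp}, where bounding $|Q_j|$ by a supremum costs nothing because $1/M'=0$. This gives $a\in D_{p_r^*}(w)$ with $\|a\|\le[w]_{A_r}^{1/(nr)}$, and the paper then applies Theorem \ref{Thm 1.6 CP} at exponent $p_r^*$. In the paper's route the factor $[w]_{A_r}^{1/(rn)}$ comes from the functional and only the $A_\infty$ self-improvement is needed. In your repaired route the functional condition is weight-free and the factor comes from Theorem \ref{Selfimproving Ar}. The final bounds coincide.
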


\begin{remark}
The truncation argument used to strengthen weak-type estimates into Lorentz space bounds (as in Corollary \ref{thm:PS-Lorentz-strong}) is intrinsically related to a summability condition on the right-hand side: it is applicable provided that the Lorentz index satisfies $q \le p$ (see Theorem \ref{thm:truncation-method}). This is consistent with the $L^p$ norm of the gradient term in \eqref{self improving example 1}, but it does not apply to the smaller Lorentz norm $\|\nabla f\|_{L^{p,p_r^{*}}}$ in \eqref{eq:PS-Lorentz-weak}, since $p_r^{*}>p$. Consequently, the same argument cannot strengthen \eqref{eq:PS-Lorentz-weak} to an $L^{p_r^{*},p}$ estimate on the left-hand side. 
\end{remark}

\begin{remark}
We observe that for $w \in A_r$ with $1 < r \le p$, the preceding arguments in conjunction with the precise open property of the $A_r$ class (cf. \cite{HPR, IPER}) allow for a refinement of the above estimates. In particular, the target Sobolev exponent can be extended beyond $p_r^*$ to $p_{r-\varepsilon}^*$ for some $\varepsilon = \varepsilon(n,r,w) > 0$, following the approach in \cite{ClarosJFA}. We do not explore this direction further in the present work.
\end{remark}

In the critical and supercritical range $p\ge rn$, the Sobolev exponent $p_r^{*}$ is no longer finite, so the $L^q$ estimate must be replaced by an endpoint estimate of exponential type. The next result provides such an $\exp L$ Poincaré-Sobolev inequality. 

\begin{corollary}\label{thm:PS-exp}
Let $w\in A_r$, let $1<p<\infty$, and assume that $p\ge nr$. Then, there exists a constant $c_{n,p}>0$ such that
\begin{equation}\label{eq:PS-exp}
	\|f-f_Q\|_{\exp L\left(Q, \frac{w(x)\,dx}{w(Q)}\right)} \le c_{n,p} [w]_{A_\infty}\,[w]_{A_{p,q}}^{\frac{1}{p}}\,[w]_{A_r}^{\frac{1}{p}}\, \ell(Q)\,\|\nabla f\|_{L^{p,q}\left(Q,\frac{w(x)\,dx}{w(Q)}\right)},
\end{equation}
for every cube $Q$ and every $1 \le q<\infty$.  
\end{corollary}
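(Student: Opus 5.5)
The plan is to apply the second part of Theorem \ref{Selfimproving Ar}, estimate \eqref{Thm Ar eq p>s}, to the functional
\begin{equation*}
a(Q)=c_n\,\ell(Q)\,\|\nabla f\|_{L^{p,q}\left(Q,\frac{w(x)\,dx}{w(Q)}\right)}\cdot C_w ,
\end{equation*}
where $C_w$ is a weight constant still to be identified. Two things are needed. The first is the starting point \eqref{starting-point0} with this $a$. The second is the membership $a\in SD_p^{s}(w)$ with $s=n$ and $p\ge nr=rs$, with control of $\|a\|$.

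\textbf{Starting point.} Use the unweighted $(1,1)$ Poincaré inequality \eqref{I1 P11}:
\begin{equation*}
\frac1{|Q|}\int_Q|f-f_Q|\le c_n\ell(Q)\frac{1}{|Q|}\int_Q|\nabla f|.
\end{equation*}
Then pass from $\frac1{|Q|}\int_Q|\nabla f|$ to the weighted Lorentz average. By the definition of the Chung--Hunt--Kurtz class $A_{p,q}$ (Definition \ref{def:CHK}), which characterizes exactly the estimate $\frac{1}{|Q|}\int_Q |g| \lesssim [w]_{A_{p,q}}^{1/p}\|g\|_{L^{p,q}(Q,w/w(Q))}$, I expect to get
\begin{equation*}
\frac1{|Q|}\int_Q|\nabla f|\le [w]_{A_{p,q}}^{1/p}\,\|\nabla f\|_{L^{p,q}(Q,\,w\,dx/w(Q))}.
\end{equation*}
This is the same mechanism that gives the factor $[w]_{A_{p,p_r^*}}^{1/p}$ in Corollary \ref{thm:PS-Lorentz-weak}. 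So we take $C_w=[w]_{A_{p,q}}^{1/p}$.

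\textbf{$SD$ condition.} I need
\begin{equation*}
\sum_j a(Q_j)^p w(Q_j)\lesssim \Big(\frac{|\bigcup Q_j|}{|Q|}\Big)^{p/n}a(Q)^pw(Q).
\end{equation*}
Since $\ell(Q_j)^p=|Q_j|^{p/n}\le |\bigcup_i Q_i|^{p/n}$, it suffices to check that
\begin{equation*}
\sum_j \|g\|^p_{L^{p,q}(Q_j,w/w(Q_j))}\,w(Q_j)\lesssim \|g\|_{L^{p,q}(Q,w/w(Q))}^p\,w(Q).
\end{equation*}
In unnormalized form this reads $\sum_j\|g\chi_{Q_j}\|_{L^{p,q}(w)}^p\le C\|g\chi_Q\|^p_{L^{p,q}(w)}$. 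This is the disjoint-superadditivity (lower $p$-estimate) of Lorentz quasinorms. It holds with a constant depending only on $p$ when $q\ge p$. When $q<p$, the $L^{p,q}$ norm dominates the $L^p$ norm, so it is more delicate.

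This is the step I expect to be the main obstacle, because for $q<p$ the $p$-superadditivity can fail. My first attempt for $q<p$ would be to bound each $\|g\chi_{Q_j}\|_{L^{p,q}}$ and sum using the concavity of $t\mapsto t^{p/q}$ applied to distribution functions. If that fails, I would instead use the monotonicity $a(Q_j)\le C a(Q)\,(w(Q)/w(Q_j))^{1/p}$-type bounds. Alternatively, I could redefine $a(Q)$ using the nonincreasing rearrangement restricted to $Q$, so that the sum is controlled by a single $L^{p,q}$ norm up to $c_p$ constants, which are allowed in $c_{n,p}$.

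\textbf{Conclusion.} With $\|a\|_{SD^n_p(w)}\le c_{n,p}$, \eqref{Thm Ar eq p>s} gives the bound
\begin{equation*}
c_{n,p}[w]_{A_\infty}[w]_{A_r}^{1/p}\,\|a\|\,a(Q).
\end{equation*}
Substituting $a(Q)$ yields \eqref{eq:PS-exp}.
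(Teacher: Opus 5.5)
Your starting inequality and overall plan are sound: you apply the exponential part of Theorem \ref{Selfimproving Ar} with $s=n$, using $p\ge nr=rs$. However, the step proving the $SD$ condition has the Lorentz superadditivity backwards, and as written it fails exactly where it is needed.

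Lemma \ref{Lemma Lorentz} gives $\sum_j\|g\|^\alpha_{L^{p,q}(E_j,\mu)}\le\|g\|^\alpha_{L^{p,q}(\cup_jE_j,\mu)}$ for $\alpha\ge\max\{p,q\}$. With $\alpha=p$ this covers $q\le p$, so that case is immediate (constant $1$), not delicate. For $q>p$ the $p$-superadditivity you claim is false. Take $E_1,\dots,E_N$ disjoint with $w(E_j)=2^{-j}$ and $g=2^{j/p}$ on $E_j$. Then $\|g\chi_{E_j}\|^p_{L^{p,q}(w)}=(p/q)^{p/q}$ for each $j$, so the sum is of order $N$, while $\|g\|^p_{L^{p,q}(w)}\lesssim N^{p/q}$. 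Thus for $q>p$ your reduction via $\ell(Q_j)^p\le|\bigcup_iQ_i|^{p/n}$ discards the information you need. The fallback ideas you list are aimed at the wrong case and are not carried out. The corollary is claimed for every $1\le q<\infty$, and large $q$ gives the strongest statement, so this is a genuine gap.

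The paper avoids it by raising the exponent of the $SD$ condition. Choose $M>1$ with $q\le p_M^*$, where $\frac1p-\frac1{p_M^*}=\frac1{nrM}$. Lemma \ref{Lemma Lpq SDp} then gives $a\in SD^{nM'}_{p_M^*}(w)$ with $\|a\|\le[w]_{A_r}^{1/(nrM)}$; it uses Lemma \ref{Lemma Lorentz} with $\alpha=p_M^*\ge\max\{p,q\}$ and \eqref{Ap 2} to absorb part of the factor $|Q_j|^{p_M^*/n}$. The assumption $p\ge nr$ gives $p_M^*\ge rnM'$, so the exponential part of Theorem \ref{Selfimproving Ar} applies with exponent $p_M^*$, and $[w]_{A_r}^{1/p_M^*}[w]_{A_r}^{1/(nrM)}=[w]_{A_r}^{1/p}$.

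Your own route can also be repaired without changing the exponent. For $q>p$, use H\"older with exponents $q/p$ and $q/(q-p)$, then Lemma \ref{Lemma Lorentz} with $\alpha=q$:
\[
\sum_j|Q_j|^{\frac pn}\|g\|^p_{L^{p,q}(Q_j,w)}\le\Big(\sum_j|Q_j|^{\beta}\Big)^{\frac{q-p}{q}}\|g\|^p_{L^{p,q}(Q,w)},\qquad \beta=\frac{pq}{n(q-p)}>1 \text{ since } p\ge n .
\]
Hence $\sum_j|Q_j|^\beta\le|\bigcup_jQ_j|^\beta$, and $a\in SD^n_p(w)$ with constant $1$ for every $q$. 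This variant needs no $A_r$ in the $SD$ step. It also keeps the exponent fed into Proposition \ref{prop:linearweakLptoexpL} equal to $p$, so the resulting constant does not grow with $q$. The paper's route, by contrast, invokes that proposition at $p_M^*\ge q$.
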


We finish this section with two results in different settings. We have the following analogue of Corollary \ref{thm:PS-Lorentz-strong} for pairs of weights (see Definition \ref{def:pair}).

\begin{corollary}\label{prop:pairs}  
	Let $(u,v)\in A_p$ and $u\in A_r$ with $1\le r\le p$. Let $p_r^{*}$ be defined in \eqref{eq:p*r Section 1.2}.
	
	\begin{itemize}
		\item Assume that $1\le p < nr$. Then there exists a dimensional constant $c_n>0$ such that, for every cube $Q$,
		\begin{equation*}
			\|f-f_{Q}\|_{\strt{2.3ex} L^{p_r^{*},p}\left(Q, \frac{u(x)\,dx}{u(Q)}\right)} \le c_n p^*_r [u]_{A_\infty}\, [u]_{A_r}^{\frac{1}{rn}}\, [u,v]_{A_p}^{\frac{1}{p}}\, \ell(Q) \left( \frac{1}{u(Q)}\int_{Q}|\nabla f(x)|^p\, v(x)\,dx\right)^{\frac{1}{p}}.
		\end{equation*}
		
		\item Assume that $p \ge nr$. Then there exists a constant $c_{n,p}>0$ such that, for every cube $Q$,
		\begin{equation*}
			\|f-f_Q\|_{\exp L\left(Q, \frac{u(x)\,dx}{u(Q)}\right)} \le c_{n,p} [u]_{A_\infty}\,[u]_{A_r}^{\frac{1}{p}}\,[u,v]_{A_p}^{\frac{1}{p}}\, \ell(Q) \left( \frac{1}{u(Q)}\int_{Q}|\nabla f(x)|^p\, v(x)\,dx\right)^{\frac{1}{p}}.
		\end{equation*}
	\end{itemize}
\end{corollary}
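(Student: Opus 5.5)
The plan is to deduce both parts of Corollary \ref{prop:pairs} from Theorem \ref{Selfimproving Ar}, applied with $w=u$, $s=n$ and the functional
\begin{equation*}
a(Q)=c_n\,[u,v]_{A_p}^{\frac1p}\,\ell(Q)\left(\frac{1}{u(Q)}\int_Q|\nabla f(x)|^p\,v(x)\,dx\right)^{\frac1p}.
\end{equation*}
The first task is to check the starting inequality \eqref{starting-point0} for this $a$. We begin with the classical $(1,1)$ Poincaré inequality \eqref{I1 P11}. We then apply Hölder's inequality with the weight $v$:
\begin{equation*}
\frac{1}{|Q|}\int_Q|\nabla f|\le \frac{1}{|Q|}\left(\int_Q|\nabla f|^p v\right)^{\frac1p}\left(\int_Q v^{1-p'}\right)^{\frac1{p'}}.
\end{equation*}
The two-weight $A_p$ condition, in the form
\begin{equation*}
\left(\frac{1}{|Q|}\int_Q u\right)\left(\frac1{|Q|}\int_Q v^{1-p'}\right)^{p-1}\le[u,v]_{A_p},
\end{equation*}
lets us replace $|Q|^{-1}\bigl(\int_Q v^{1-p'}\bigr)^{1/p'}$ by $[u,v]_{A_p}^{1/p}\,u(Q)^{-1/p}$. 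This gives \eqref{starting-point0}. For $p=1$ we use instead the form $u_Q\le[u,v]_{A_1}\,v(x)$ for a.e. $x\in Q$, which yields the same bound directly.

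Next we verify $a\in SD_p^n(u)$ with a dimensional constant. For disjoint $Q_j\subset Q$ we compute
\begin{equation*}
\sum_j a(Q_j)^p u(Q_j)=c\sum_j \ell(Q_j)^p\int_{Q_j}|\nabla f|^p v.
\end{equation*}
Since $\ell(Q_j)^p=|Q_j|^{p/n}\le|\bigcup_i Q_i|^{p/n}$ and the $Q_j$ are disjoint, this is at most
\begin{equation*}
c\left(\frac{|\bigcup_j Q_j|}{|Q|}\right)^{\frac pn}\ell(Q)^p\int_Q|\nabla f|^pv=\left(\frac{|\bigcup_j Q_j|}{|Q|}\right)^{\frac pn}a(Q)^p\,u(Q).
\end{equation*}
Hence $\|a\|_{SD_p^n(u)}\le1$, which is the argument of \cite[Lemma 3.2]{PR} with $\nu=|\nabla f|^pv\,dx$.

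We now apply Theorem \ref{Selfimproving Ar} with $r$ as given and $s=n$, so that $p_{r,n}^*=p_r^*$. In the case $p<nr$ this yields the weak estimate in $L^{p_r^*,\infty}(Q,u\,dx/u(Q))$ with constant $c_np_r^*[u]_{A_\infty}[u]_{A_r}^{1/(rn)}[u,v]_{A_p}^{1/p}$. In the case $p\ge nr$ it gives the $\exp L$ estimate, which is exactly the second claim.

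The remaining and main step is upgrading the weak norm to the Lorentz norm $L^{p_r^*,p}$ in the first claim. For this we use the truncation method (Theorem \ref{thm:truncation-method}, referenced in Section \ref{Section 2}), as in Corollary \ref{thm:PS-Lorentz-strong}. We apply the weak inequality to the truncations $f_k=\min\{\max\{f-\lambda_0,0\},2^k\}$ (and their negative-part counterparts) at dyadic levels $2^k$, with $\lambda_0$ a median of $f$ on $Q$. The gradient pieces $\nabla f\,\chi_{\{2^k<f-\lambda_0\le2^{k+1}\}}$ have disjoint supports. Summing their $p$-th powers against $v$ therefore reproduces $\int_Q|\nabla f|^pv$, and since the Lorentz index equals $p$ this summation controls the $L^{p_r^*,p}$ quasinorm.

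The delicate point is that this truncation argument must be run with a fixed cube $Q$, and the truncated functions only satisfy a Poincaré inequality in the form ``oscillation $\le a(Q')$'' for subcubes $Q'$. We will therefore apply the weak estimate already established for $f_k$ on the same cube $Q$, which is legitimate because the $SD$ argument above works for any Sobolev function. We will also handle the passage from $f-f_Q$ to $f-\lambda_0$ by the standard comparability of the median and the mean under $u$. Both adjustments cost only dimensional constants.
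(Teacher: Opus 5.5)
Your proposal is correct and follows the paper's own route. The two-weight H\"older step gives the starting inequality for $a(Q)=c_n[u,v]_{A_p}^{1/p}\ell(Q)\bigl(\frac{1}{u(Q)}\int_Q|\nabla f|^pv\bigr)^{1/p}$, then \cite[Lemma 3.2]{PR} gives $a\in SD_p^n(u)$, Theorem \ref{Selfimproving Ar} with $w=u$, $s=n$ gives both the weak and the $\exp L$ bounds, and Theorem \ref{thm:truncation-method} upgrades to $L^{p_r^*,p}$; its unweighted $L^1$ hypothesis is exactly your starting inequality. One small slip: the truncations with disjoint gradient supports are $\min\{\max\{f-\lambda_0-2^k,0\},2^k\}$ (i.e.\ $f_{k+1}-f_k$ in your notation), not $f_k$ itself, and $\lambda_0$ should be a median with respect to $u\,dx$ so that no extra weight constants enter.
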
 
\

\subsection{Fractional Poincaré-Sobolev inequalities}\label{Section FPS}

In this section, we establish weighted Poincaré-Sobolev inequalities whose right-hand side is given by the local fractional Sobolev seminorm (also called Gagliardo seminorm). Let $1\le p<\infty$ and $0<\delta<1$. A classical fractional Poincaré-Sobolev inequality was obtained by Bourgain, Brezis, and Mironescu \cite{BBM1} (see also \cite{BBM2}), which shows that there exists a dimensional constant $c_n>0$ such that, for every cube $Q\subset\mathbb{R}^n$,
\begin{equation}\label{PSF SP}
	\frac{1}{|Q|}\int_Q |f(x)-f_Q|\,dx \le c_n\,(1-\delta)^{\frac{1}{p}}\,\ell(Q)^\delta \left(\frac{1}{|Q|}\int_Q\int_Q \frac{|f(x)-f(y)|^p}{|x-y|^{n+\delta p}}\,dy\,dx\right)^{\frac{1}{p}}.
\end{equation}
For fixed $p$ and $0<\delta<1$, the right-hand side of \eqref{PSF SP} is smaller than $\ell(Q) \|\nabla f\|_{L^p(Q, \frac{dx}{|Q|})}$ (see for instance \cite{Inverse}). The BBM extra gain $(1-\delta)^{1/p}$ here reflects the correct scaling of the fractional seminorm: in
\cite{BBM2} it is proved that there exists $K_{n,p}>0$ such that for every $f\in W^{1,p}(Q)$,
\begin{equation*}
	\lim_{\delta\to 1^-} (1-\delta)^{\frac{1}{p}} \left(\frac{1}{|Q|}\int_Q\int_Q \frac{|f(x)-f(y)|^p}{|x-y|^{n+\delta p}}\,dy\,dx\right)^{\frac{1}{p}} = K_{n,p}\left(\frac{1}{|Q|}\int_Q |\nabla f(x)|^p\,dx\right)^{\frac{1}{p}}.
\end{equation*}

We will use as a starting point a weighted version of \eqref{PSF SP}. More
precisely, we shall rely on a weighted estimate proved in \cite{MPW} (see
Theorem \ref{thm:MPW23} below), which is a weighted counterpart of \eqref{PSF SP} for $A_p$ weights. Combining this starting inequality with Theorem \ref{Selfimproving Ar}, we obtain fractional Poincaré-Sobolev inequalities and, in the critical regime, exponential integrability. The main result of this section is the following
corollary.

\begin{corollary}\label{thm:fractional-PS}
	Let $0<\delta<1$ and $w\in A_r$ with $1\le r\le p$. 

	\begin{itemize}
		\item Assume that $1\le p < \frac{nr}{\delta}$. Then there exists $c_{n}>0$ such that, for every cube $Q$,
		\begin{align*}
			\left\| f-f_{Q}\right\|_{\strt{2.3ex} L^{p^*_{\delta, r}, p} \left(Q, \frac{w(x)dx}{w(Q)} \right)} 
			\le & \, c_n p^*_{\delta, r} [w]_{A_\infty} [w]_{A_p}^{\frac{1}{p}}[w]_{A_r}^{\frac{\delta }{rn}}(1-\delta)^\frac{1}{p} \cdot  \\
			& \cdot \ell(Q)^\delta \left( \frac{1}{w(Q)}\int_Q \int_Q \frac{|f(x)-f(y)|^p}{|x-y|^{n+\delta p }}dy\, w(x)dx \right)^\frac{1}{p},
		\end{align*}
        where the weighted fractional Sobolev exponent $p^{*}_{\delta, r}$ is defined by the relation
	   \begin{equation}\label{eq:fractional-exponent}
		  \frac{1}{p}- \frac{1}{p^{*}_{\delta, r}}=\frac{\delta}{n} \frac{1}{r}.
	   \end{equation}
		
		\item Assume that $p \ge \frac{nr}{\delta}$. Then there exists $c_{n,p}>0$ such that, for every cube $Q$,
		\begin{align*}
			\left\| f-f_Q\right\|_{\exp L \left(Q, \frac{w(x)dx}{w(Q)} \right)} 
			\le & \, c_{n,p} [w]_{A_\infty} [w]_{A_p}^{\frac{1}{p}}[w]_{A_r}^{\frac{1}{p}} (1-\delta)^\frac{1}{p} \cdot \nonumber \\
			& \cdot \ell(Q)^\delta \left( \frac{1}{w(Q)}\int_Q \int_Q \frac{|f(x)-f(y)|^p}{|x-y|^{n+\delta p }}dy\, w(x)dx \right)^\frac{1}{p}.
		\end{align*}
	\end{itemize}
\end{corollary}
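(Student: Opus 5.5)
The plan is to apply Theorem \ref{Selfimproving Ar} with $s=n/\delta$. The starting point is the weighted fractional inequality of \cite{MPW} (Theorem \ref{thm:MPW23}), which I take in the form
\begin{equation*}
\frac{1}{|Q|}\int_Q |f-f_Q|\,dx \le c_n [w]_{A_p}^{\frac1p}(1-\delta)^{\frac1p}\,\ell(Q)^\delta\left(\frac{1}{w(Q)}\int_Q\int_Q \frac{|f(x)-f(y)|^p}{|x-y|^{n+\delta p}}\,dy\,w(x)\,dx\right)^{\frac1p}=:a(Q).
\end{equation*}
Then \eqref{starting-point0} holds with this functional. Write $a(Q)=C\,\ell(Q)^\delta(\nu(Q)/w(Q))^{1/p}$, where
\begin{equation*}
d\nu(x)=\left(\int_{Q}\frac{|f(x)-f(y)|^p}{|x-y|^{n+\delta p}}dy\right) w(x)\,dx .
\end{equation*}
The difficulty is that the inner integral runs over $Q$ itself, so $\nu$ depends on $Q$.

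The main step, and the obstacle, is verifying $a\in SD_p^{n/\delta}(w)$ with $\|a\|\le c$. Take pairwise disjoint subcubes $Q_j\subset Q$. Since $Q_j\subset Q$, the double integral over $Q_j\times Q_j$ is at most the double integral over $Q_j\times Q$. Summing over $j$ therefore gives
\begin{equation*}
\sum_j a(Q_j)^p w(Q_j)\le C^p\max_j \ell(Q_j)^{\delta p}\int_{\bigcup Q_j}\int_Q\cdots .
\end{equation*}
The factor $\max_j\ell(Q_j)^{\delta p}$ is not of the required form. I instead argue as in \cite[Lemma 3.2]{PR}. By disjointness, $\sum_j \ell(Q_j)^{\delta p}\nu(Q_j)$ is bounded, via Hölder in the form $\ell(Q_j)^{\delta p}=|Q_j|^{\delta p/n}\le(\sum_i|Q_i|)^{\delta p/n}$, by
\begin{equation*}
\Bigl(\bigl|\textstyle\bigcup Q_j\bigr|/|Q|\Bigr)^{\delta p/n}\,\ell(Q)^{\delta p}\,\nu\bigl(\textstyle\bigcup Q_j\bigr).
\end{equation*}
The last factor is at most $\ell(Q)^{\delta p}\nu(Q)$. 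This gives exactly the $SD_p^{n/\delta}(w)$ condition with constant $1$, since the $w(Q_j)$ factors cancel.

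Now apply Theorem \ref{Selfimproving Ar} with $s=n/\delta\ge1$, so that $rs=nr/\delta$. The exponent relation $\frac1p-\frac1{p^*_{r,s}}=\frac{\delta}{nr}$ matches \eqref{eq:fractional-exponent}, and the factor $[w]_{A_r}^{1/(rs)}=[w]_{A_r}^{\delta/(rn)}$ is as claimed. This yields the weak $L^{p^*_{\delta,r},\infty}$ bound when $p<nr/\delta$, and the $\exp L$ bound (the second bullet, with $[w]_{A_r}^{1/p}$) when $p\ge nr/\delta$.

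To upgrade weak type to $L^{p^*_{\delta,r},p}$ in the first bullet, I use the truncation method, Theorem \ref{thm:truncation-method}. This requires checking that the fractional seminorm behaves well under truncations $f_\lambda^k=\min(\max(f-\lambda,0),\lambda)$, in the sense that the sum over dyadic levels $\lambda=2^k$ of the truncated seminorms to the power $p$ is controlled by the full seminorm. This follows because $|f^k(x)-f^k(y)|\le|f(x)-f(y)|$, and, for fixed $x,y$, $\sum_k|f^k(x)-f^k(y)|^p\le|f(x)-f(y)|^p$ since $p\ge1$ and the truncation increments are disjoint pieces summing to $|f(x)-f(y)|$. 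That summability condition with $q=p$ is what the truncation theorem requires. The constants are unaffected apart from dimensional factors.
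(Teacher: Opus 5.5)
Your proposal is correct and follows essentially the paper's route: start from Theorem~\ref{thm:MPW23} with $u=v=w$, check $a\in SD_p^{n/\delta}(w)$ with $\|a\|\le 1$, apply Theorem~\ref{Selfimproving Ar} with $s=n/\delta$ for both bullets, and upgrade the weak bound in the first bullet by truncation. Your direct check of the $SD_p^{n/\delta}(w)$ condition is the $M=\infty$ case of Lemma~\ref{CMPR Lemma}, which the paper cites rather than proves; the one slip is that the truncation result you need is the kernel version, Theorem~\ref{Truncation method} with $K(x,y)=\frac{\ell(Q)^{\delta p}}{w(Q)}\frac{w(x)}{|x-y|^{n+\delta p}}$, not the gradient version Theorem~\ref{thm:truncation-method}, and its unweighted $L^1$ hypothesis is simply the starting inequality \eqref{MPW eq}, while your pointwise bound $\sum_k|f^k(x)-f^k(y)|^p\le|f(x)-f(y)|^p$ is exactly what that theorem relies on.
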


\begin{remark}
In the subcritical range $1\le p<\frac{nr}{\delta}$, the exponent $p_{\delta, r}^*$ is the natural fractional analogue of the Sobolev exponent. In particular, we show that this exponent is sharp in a certain range of $p$ and $r$, see Proposition \ref{prop: counterexample fractional p=1} and Remark {\ref{Fractional PS sharpness}} below.
\end{remark}

We conclude this section with the analogue of Corollary \ref{thm:fractional-PS} for pairs of weights. 

\begin{corollary}\label{thm:fractional-PS-pairs}
	Let $0<\delta<1$, $(u,v)\in A_p$, and $u\in A_r$ with $1\le r\le p$.

	\begin{itemize}
		\item Assume that $1\le p < \frac{nr}{\delta}$. Then there exists $c_{n}>0$ such that, for every cube $Q$,
		\begin{align*}
			\left\| f-f_{Q}\right\|_{\strt{2.3ex} L^{p^*_{\delta, r}, p} \left(Q, \frac{u(x)dx}{u(Q)} \right)}
			\le & \, c_n p^*_{\delta, r} [u]_{A_\infty} [u]_{A_r}^{\frac{\delta}{rn}} [u,v]_{A_p}^{\frac{1}{p}}(1-\delta)^\frac{1}{p}  \\
			& \cdot \ell(Q)^\delta \left( \frac{1}{u(Q)}\int_Q \int_Q \frac{|f(x)-f(y)|^p}{|x-y|^{n+\delta p }}dy\, v(x)dx \right)^\frac{1}{p},
		\end{align*}
        where the weighted fractional Sobolev exponent $p^{*}_{\delta, r}$ is defined in \eqref{eq:fractional-exponent}.
		
		\item Assume that $p \ge \frac{nr}{\delta}$. Then there exists $c_{n,p}>0$ such that, for every cube $Q$,
		\begin{align}\label{exp result fractional}
			\left\| f-f_Q\right\|_{\exp L \left(Q, \frac{u(x)dx}{u(Q)} \right)}
			\le & \, c_{n,p} [u]_{A_\infty} [u]_{A_r}^{\frac{1}{p}} [u,v]_{A_p}^{\frac{1}{p}} (1-\delta)^\frac{1}{p} \nonumber  \\
			& \cdot \ell(Q)^\delta \left( \frac{1}{u(Q)}\int_Q \int_Q \frac{|f(x)-f(y)|^p}{|x-y|^{n+\delta p }}dy\, v(x)dx \right)^\frac{1}{p}.
		\end{align}
	\end{itemize}
\end{corollary}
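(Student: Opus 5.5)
We will derive Corollary \ref{thm:fractional-PS-pairs} from Theorem \ref{Selfimproving Ar} applied with the weight $u\in A_r$. The starting point is a two-weight version of the weighted Bourgain--Brezis--Mironescu estimate of \cite{MPW} (Theorem \ref{thm:MPW23}). For $(u,v)\in A_p$ it should read
\begin{equation*}
\frac{1}{|Q|}\int_Q|f-f_Q|\,dx\le c_n(1-\delta)^{\frac1p}[u,v]_{A_p}^{\frac1p}\,\ell(Q)^\delta\Big(\frac{1}{u(Q)}\int_Q\int_Q\frac{|f(x)-f(y)|^p}{|x-y|^{n+\delta p}}\,dy\,v(x)\,dx\Big)^{\frac1p}.
\end{equation*}
To get it, we begin from the unweighted inequality \eqref{PSF SP}, or from its refinement in \cite{MPW} with $p=1$ inside. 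We use Hölder in the form $\frac1{|Q|}\int_Q g \le \big(\frac{1}{u(Q)}\int_Q g^p v\big)^{1/p}\,\big(\frac{u(Q)}{|Q|^p}\big)^{1/p}\big(\int_Q v^{1-p'}\big)^{1/p'}$, and then the definition of $[u,v]_{A_p}$ (Definition \ref{def:pair}). If \cite{MPW} is only stated for one weight, we repeat its proof and replace the one-weight $A_p$ constant by the pair constant at the single Hölder step. The factor $(1-\delta)^{1/p}$ must be preserved, so we rely on the form in \cite{MPW} that already carries it rather than redoing BBM. Denote the right-hand side by $a(Q)$.

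Next we check that $a\in SD_p^{n/\delta}(u)$ with $\|a\|\le c$ (dimensional). Write $a(Q)=C\,\ell(Q)^\delta\big(\nu_Q(Q)/u(Q)\big)^{1/p}$ with $\nu_Q(E)=\int_E\int_Q\frac{|f(x)-f(y)|^p}{|x-y|^{n+\delta p}}dy\,v(x)dx$. For disjoint $Q_j\subset Q$ we have $\nu_{Q_j}(Q_j)\le\nu_Q(Q_j)$, since the inner domain shrinks. Then
\begin{equation*}
\sum_j a(Q_j)^p u(Q_j)\le C^p\sup_j\ell(Q_j)^{\delta p}\sum_j\nu_Q(Q_j)\le C^p\Big(\frac{|\bigcup_jQ_j|}{|Q|}\Big)^{\frac{\delta p}{n}}\ell(Q)^{\delta p}\nu_Q(Q).
\end{equation*}
This uses $\ell(Q_j)^n\le|\bigcup Q_j|$. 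The argument is exactly that of \cite[Lemma 3.2]{PR} with $s=n/\delta$, and the constant is independent of the weights.

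Now apply Theorem \ref{Selfimproving Ar} with $w=u$ and $s=n/\delta\ge1$. Then $rs=nr/\delta$ and $p^*_{r,s}=p^*_{\delta,r}$ by \eqref{eq:fractional-exponent}. The factor $[u]_{A_r}^{1/(rs)}$ becomes $[u]_{A_r}^{\delta/(rn)}$, which gives the weak-type version of the first bullet. In the case $p\ge nr/\delta$, \eqref{Thm Ar eq p>s} gives \eqref{exp result fractional} directly.

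The remaining step, which we expect to be the main obstacle, is upgrading $L^{p^*_{\delta,r},\infty}$ to $L^{p^*_{\delta,r},p}$. Gradient truncation is not directly available, so we would use the truncation theorem (Theorem \ref{thm:truncation-method}) in the form valid for the fractional seminorm. For truncations $f_\lambda=\min(\max(f-\lambda,0),\lambda)$ at dyadic levels, $|f_\lambda(x)-f_\lambda(y)|\le|f(x)-f(y)|$, and the level-set pieces have essentially disjoint contributions to the double integral, which is the fractional analogue of the disjoint supports of $\nabla f_\lambda$. Summing the weak estimates over levels with the $p$-summability ($q=p$) then gives the Lorentz $L^{p^*_{\delta,r},p}$ bound. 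The delicate point is the cross terms in $\iint$ between different level bands. We would first try the pointwise bound $\sum_k|f_{2^k}(x)-f_{2^k}(y)|^p\le|f(x)-f(y)|^p$, valid since the increments over disjoint bands sum to at most $|f(x)-f(y)|$ and $p\ge1$. This makes the sum of right-hand sides bounded by the original seminorm, and the argument of Corollary \ref{thm:PS-Lorentz-strong} then goes through verbatim.
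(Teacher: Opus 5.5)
Your proposal is correct and follows the paper's proof. You start from the two-weight estimate \eqref{MPW eq} of Theorem \ref{thm:MPW23} and note that its right-hand side lies in $SD_p^{n/\delta}(u)$; your direct check gives $\|a\|\le 1$, which is the $M=\infty$ case of Lemma \ref{CMPR Lemma} that the paper invokes. You then apply Theorem \ref{Selfimproving Ar} with $w=u$ and $s=n/\delta$. Finally you upgrade to $L^{p^*_{\delta,r},p}$ through the fractional truncation result, Theorem \ref{Truncation method}; your band inequality $\sum_k|f_{2^k}(x)-f_{2^k}(y)|^p\le|f(x)-f(y)|^p$ is exactly its mechanism, and the unweighted $L^1$ bound \eqref{MPW eq} is the second hypothesis that lets it keep the unweighted average $f_Q$.

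One caution: as stated, \cite{MPW} carries an extra factor $\delta^{-1/p'}$. You should therefore rely on Theorem \ref{thm:MPW23} itself, which removes that factor by treating $\delta\le\frac12$ and $\delta\ge\frac12$ separately, rather than on the original \cite{MPW}, to obtain a purely dimensional constant.
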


\begin{remark}
We remark that the Poincaré--Sobolev inequalities obtained in the preceding two subsections follow from a self-improving argument formulated in a general functional setting. This generality comes at the expense of a less precise dependence on the weight characteristic constants. For the specific functionals associated with the classical and fractional Poincaré inequalities, sharper quantitative estimates can be obtained; see \cite{ClarosJFA, LoristWagenaar}. In particular, when $p>1$, the corresponding estimates in \cite[Corollaries 4.4 and 5.4]{LoristWagenaar} show that, after replacing the Lorentz norm on the left-hand side with the corresponding Lebesgue norm, the power of $[w]_{A_\infty}$ can be improved from $1$ to $\tfrac{1}{p'}$.
\end{remark}

\subsection{\texorpdfstring{Weighted $L^{p,\infty}$--$L^{p^*,\infty}$ Poincaré--Sobolev inequality and applications to $A_1$ weights}{Weighted Lp,infinity--Lp*,infinity Poincare-Sobolev inequality and applications to A1 weights}}\label{subsection weak}

In this subsection, we investigate the weighted Poincaré-Sobolev inequality of type
\begin{equation*}
    \|f-f_Q\|_{Y_Q} \lesssim \ell(Q) \|\nabla f\|_{X_Q}, \quad \forall Q \subset \R^n,
\end{equation*}
where $X_Q$ and $Y_Q$ are Lorentz quasi-norms in $Q$ induced by the measure $\frac{w(x)\,dx}{w(Q)}$. We call it a weighted $X-Y$ Poincaré-Sobolev inequality. In Subsection {\ref{Section PS}}, we first prove a weighted $L^p-L^{p_r^*,\infty}$ Poincaré-Sobolev inequality \eqref{self improving example 1} and improve this result to a weighted $L^p-L^{p_r^*,p}$ Poincaré-Sobolev inequality \eqref{eq:PS-Lorentz-strong} by a truncation argument. In the present subsection, we improve \eqref{self improving example 1} from another perspective, replacing the right-hand side term with a smaller Lorentz norm. One result along this direction has already been shown in the previous subsections, namely \eqref{eq:PS-Lorentz-weak}. Here we show that, if $w\in A_1$, the right-hand side term can be replaced by the weak norm $L^{p,\infty}$. More precisely, we prove a weighted $L^{p,\infty}-L^{p^*,\infty}$ Poincaré-Sobolev inequality for every $w\in A_1$.

We first provide a Fefferman-Stein-type Poincaré-Sobolev inequality via an alternative approach, independent of the self-improving theory developed above. The estimate has weak norms on both sides and the maximal function of the weight on the right-hand side. We do not impose any assumptions on the weight. 

\begin{theorem}\label{Thm PS 4}
    Let $w$ be a weight in $\R^n$ with $n\ge 2$ and let $1<p<n$. Suppose $f$ is a Lipschitz function. Then, there exists a constant $c_{n,p}>0$ such that
        \begin{equation}\label{Thm PS 4 Eq}
            \left\| f-f_Q\right\|_{ \strt{2.3ex} L^{p^*,\infty}(Q,w)} \le c_{n,p} \left\| \left| \nabla f \right| \frac{M^c(w\chi_Q)^\frac{1}{n'}}{w} \right\|_{\strt{2.3ex} L^{p,\infty}(Q,w)}
        \end{equation}
    for each cube $Q$, where $\frac{1}{p^*}=\frac{1}{p}-\frac{1}{n}$ and $M^c$ denotes the centered Hardy-Littlewood maximal function.
\end{theorem}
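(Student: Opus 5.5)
We will prove \eqref{Thm PS 4 Eq} by combining the local representation formula with a weighted weak-type estimate for the fractional integral of order one, in the spirit of Fefferman--Stein. For a Lipschitz $f$ and $x\in Q$ one has
\begin{equation*}
|f(x)-f_Q|\le c_n\int_Q \frac{|\nabla f(y)|}{|x-y|^{n-1}}\,dy = c_n I_1(|\nabla f|\chi_Q)(x).
\end{equation*}
It therefore suffices to prove, for $g\ge 0$ supported in $Q$ and $1<p<n$,
\begin{equation}\label{plan I1}
\|I_1 g\|_{L^{p^*,\infty}(Q,w)}\le c_{n,p}\,\Big\| g\,\frac{M^c(w\chi_Q)^{1/n'}}{w}\Big\|_{L^{p,\infty}(Q,w)} .
\end{equation}
Writing $h=g\,M^c(w\chi_Q)^{1/n'}/w$, so that $g=h\,w\,M^c(w\chi_Q)^{-1/n'}$, the task becomes a two-weight weak-to-weak bound for $h\mapsto I_1(h\,w\,(M^c(w\chi_Q))^{-1/n'})$ from $L^{p,\infty}(w)$ to $L^{p^*,\infty}(w)$.

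The first step is the endpoint $p=1$: the Fefferman--Stein type inequality
\begin{equation*}
\|I_1 g\|_{L^{n',\infty}(Q,w)}\le c_n\int_Q g\,M^c(w\chi_Q)^{1/n'}\,dx.
\end{equation*}
This is proved by linearity. By Minkowski's inequality for the weak $L^{n',\infty}$ quasinorm (which is normable since $n'>1$), it suffices to treat $g=\delta_y$, i.e.\ to bound $\||x-y|^{1-n}\|_{L^{n',\infty}(w\chi_Q)}$. The set $\{|x-y|^{1-n}>\lambda\}$ is the ball $B(y,\lambda^{-1/(n-1)})$, and
\begin{equation*}
w(Q\cap B(y,r))\le c_n r^n M^c(w\chi_Q)(y).
\end{equation*}
Hence $\lambda^{n'}w(\{\cdot>\lambda\}\cap Q)\le c_n M^c(w\chi_Q)(y)$, which gives the bound $M^c(w\chi_Q)(y)^{1/n'}$. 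The centered maximal function is exactly what appears here, and $n\ge2$ ensures $n'<\infty$.

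The second step upgrades this to $1<p<n$ with weak norms on the right. The idea is interpolation, but the weights depend on the operator, so a naive Marcinkiewicz argument is not directly available. I would instead use a level-set decomposition (a truncation of $h$). Write $V=M^c(w\chi_Q)^{1/n'}$ and fix $\lambda$. Split $g=g\chi_{\{h>t\}}+g\chi_{\{h\le t\}}$ with $t$ chosen depending on $\lambda$.
\begin{itemize}
\item For the large part, apply the $p=1$ estimate. Its cost is $\int_{\{h>t\}}hV\,dx=\int_{\{h>t\}} h\,w\,dx$ (the $w$ arises because $g V = h w$). This is controlled by $\|h\|_{L^{p,\infty}(w)}^p t^{1-p}$, by the layer-cake formula with $p>1$.
\item For the small part, obtain an $L^{q}$, or even $L^\infty$-type, endpoint: $h\le t$ bounded. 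This requires a bound on $I_1(tV^{-1}w\chi_Q)$ pointwise or in $L^{r,\infty}(w)$.
\end{itemize}
This second endpoint is the main obstacle. I would try to show $I_1(w V^{-1}\chi_Q)\lesssim$ something in $L^{n,\infty}$-type or exponential class on $Q$ relative to $w$, using $wV^{-1}\le w^{1/n}(M^c w)^{\ldots}$ together with $w\le M^c(w\chi_Q)$ a.e.\ on $Q$, which gives $wV^{-1}\le w^{1/n}$. Combining this with a dyadic layer estimate for $I_1$ should produce a restricted $L^{n,1}\to L^\infty$ (BMO-like) endpoint. Interpolating the weak $(1,n')$ endpoint against this (via real interpolation with change of measure $hw$) would then give the $(p,p^*)$ weak-to-weak bound for $1<p<n$, with constants blowing up as $p\to1$ or $p\to n$.

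If that fails, a fallback is to restrict the second endpoint to a slightly larger $p_1<n$. I would do this by applying the $p=1$ argument with the kernel split at radius $R$: near part with cost $R\cdot(\ldots)$, far part bounded by a sup. Optimizing $R$ in the Hedberg manner, via $I_1g(x)\lesssim R\,M(g)(x)+R^{1-n/p}\|\cdot\|$, then gives directly that $I_1 g\lesssim \|g\|^{p/n}\,M(\cdot)^{1-p/n}$. The weak-type bound would then follow from the weighted weak $L^1$ Fefferman--Stein inequality $w(\{Mg>\lambda\})\lesssim \lambda^{-1}\int |g|M w$, adapted to weak-$L^p$ data.
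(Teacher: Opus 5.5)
Your Step~1 is correct. It is the Minkowski proof of the weak isoperimetric inequality in Theorem~\ref{Thm FPW}, which is also the $p=1$ input of the paper. The gap is Step~2, the only place where $1<p<n$ and the weak norm on the right enter. You leave the second endpoint open, and the tools you propose cannot supply it.

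Since $w/V\le w^{1/n}$, any argument passing through that bound must prove the endpoint for the larger operator $h\mapsto I_1(h\,w^{1/n}\chi_Q)$, and this fails. Take $w(y)=|y|^{-n}(\log(e/|y|))^{-2}$ near the origin and $h=\chi_{B(0,\delta)}$. Then $\|h\|_{L^{n,1}(w)}\simeq(\log(e/\delta))^{-1/n}<\infty$, whereas for $|x|<\delta/2$
\[
I_1(h\,w^{1/n})(x)\gtrsim\int_{2|x|}^{\delta}\frac{dr}{r\,(\log(e/r))^{2/n}}\longrightarrow\infty\qquad(x\to0),
\]
because $2/n\le1$. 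The reason is that $V(y)$ controls the $w$-mass of every ball around $y$, not just the value $w(y)$.

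The Lebesgue-measure Hedberg fallback has the same defect. Its far part needs a Lebesgue-measure norm of $g=hw/V$, and Fefferman--Stein for $M$ produces $\int g^pMw=\int h^p\,w^{p}V^{-p}Mw$. Neither is controlled by $\|h\|_{L^{p,\infty}(Q,w)}$ for an arbitrary weight: for $w=|y|^{a}$ with $-n<a<0$ one has $w^{p-1}V^{-p}Mw\simeq|y|^{ap/n}$ near $0$. Incidentally, your worry about Marcinkiewicz is unfounded: $Th:=I_1(h\,w\,V^{-1}\chi_Q)$ is a fixed positive linear operator on $(Q,w\,dx)$.

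The missing idea is a pointwise kernel bound that uses $M^c$ at scale $|x-y|$. The cube centred at $y$ of side $4|x-y|$ contains $B(x,|x-y|)$, so $M^c(w\chi_Q)(y)\ge(4|x-y|)^{-n}w(Q\cap B(x,|x-y|))$, that is,
\[
\frac{|x-y|^{1-n}}{V(y)}\le4^{n-1}\phi_x(y),\qquad\phi_x(y):=w\big(Q\cap B(x,|x-y|)\big)^{-1/n'} .
\]
Up to a null set, $\{\phi_x>\lambda\}$ is $Q$ intersected with a ball centred at $x$ of $w$-measure at most $\lambda^{-n'}$. Hence $\|\phi_x\|_{L^{n',\infty}(Q,w)}\le1$, and \eqref{Holder Lorentz} gives $\|Th\|_{L^\infty(Q)}\le c_n\|h\|_{L^{n,1}(Q,w)}$, which is the restricted endpoint you were after.

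With it your splitting closes. If $\|h\|_{L^{p,\infty}(Q,w)}=1$, then $\|h\chi_{\{h\le t\}}\|_{L^{n,1}(Q,w)}\lesssim t^{1-p/n}$ (using $p<n$) and $\int_{\{h>t\}}hw\lesssim t^{1-p}$ (using $p>1$). Choosing $t\simeq\lambda^{n/(n-p)}$ and applying Step~1 to the large part gives $w(\{Th>\lambda\})\lesssim(t^{1-p}/\lambda)^{n'}\simeq\lambda^{-p^*}$. The same kernel bound also repairs the Hedberg route, provided it is run with respect to the measure $w\chi_Q$ and its centred maximal operator, which is bounded by Besicovitch.

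Completed this way, your argument is genuinely different from the paper's, which uses neither the representation formula nor a second endpoint. The paper instead:
\begin{itemize}
\item selects a near-extremal level $t_1$, above a large multiple of the normalized right-hand side, with $w(\{|f-f_Q|>t_1/2\})\lesssim w(\{|f-f_Q|>t_1\})$;
\item applies Theorem~\ref{Thm FPW} to the truncation $\min\{(|f-f_Q|-t_1/2)_+,\,t_1/2\}$, whose mean is at most $t_1/4$ by the unweighted $(1,1)$ Poincaré inequality and Kolmogorov;
\item uses that this truncation's gradient lives on the layer $\{t_1/2<|f-f_Q|<t_1\}$, so $L^{p',1}$--$L^{p,\infty}$ H\"older on that layer yields $t_1w(E_{t_1})^{1/n'}\lesssim w(E_{t_1})^{1/p'}\,\||\nabla f|V/w\|_{L^{p,\infty}(Q,w)}$.
\end{itemize}
Your operator route would give the whole scale $L^{p,q}(w)\to L^{p^*,q}(w)$ at once. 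The paper's route needs only the $p=1$ inequality for $f$ itself.
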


The previous result is in the spirit of \cite[Theorem 1.21]{PR}, which it improves for the range $p > 1$ in two significant ways. First, the right-hand side involves the weaker Lorentz quasi-norm $L^{p,\infty}$. Second, the oscillation on the left-hand side is measured with respect to the unweighted average $f_Q$, rather than the weighted version $f_{Q,w}$. Furthermore, \cite[Theorem 1.21]{PR} can be recovered as a corollary by combining \eqref{Thm PS 4 Eq} with a standard truncation argument. We provide further details in Section \ref{Section Weak}.

For $A_1$ weights, we can derive the following weighted Poincaré-Sobolev inequality with weak norms.

\begin{corollary}\label{Thm PS 5}
    Let $w\in A_1$ be a weight in $\R^n$ with $n\ge 2$ and let $1<p<n$. Then there exists a constant $c_{n,p}>0$ such that
        \begin{equation}\label{Thm PS 5 Eq}
            \left\| f-f_Q\right\|_{\strt{2.3ex} L^{p^*,\infty}\left(Q,\frac{w(x)dx}{w(Q)}\right)}
            \le c_{n,p} [w]_{A_1} \ell(Q)
            \left\| \nabla f \right\|_{\strt{2.3ex} L^{p,\infty}\left(Q,\frac{w(x)dx}{w(Q)}\right)}
        \end{equation}
    for each cube $Q$.
\end{corollary}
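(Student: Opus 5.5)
The plan is to deduce Corollary \ref{Thm PS 5} directly from Theorem \ref{Thm PS 4}, using the $A_1$ condition to control the maximal function of the weight. Afterwards we rescale both sides to the normalized measure $\frac{w(x)\,dx}{w(Q)}$. We may assume $f$ is Lipschitz and remove this by a standard density argument.

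\textbf{Step 1: pointwise bound on the weight factor.} Fix a cube $Q$ and $x\in Q$. The $A_1$ condition gives
\[
M^c(w\chi_Q)(x)\le Mw(x)\le [w]_{A_1}\, w(x) \quad \text{a.e.}
\]
We also have the trivial bound
\[
M^c(w\chi_Q)(x)\le \sup_{r>0}\frac{w(Q\cap B(x,r))}{|B(x,r)|}.
\]
To produce the factor $\ell(Q)$ we need to combine these. Write
\[
\frac{M^c(w\chi_Q)^{1/n'}}{w}= \Big(\frac{M^c(w\chi_Q)}{w}\Big)^{1/n'} w^{-1/n}.
\]
The first factor is at most $[w]_{A_1}^{1/n'}$. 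For the second factor we use the $A_1$ condition on $Q$ itself: $\frac{w(Q)}{|Q|}\le [w]_{A_1} w(x)$ for a.e. $x\in Q$. Hence
\[
w(x)^{-1/n}\le [w]_{A_1}^{1/n}\,|Q|^{1/n}\,w(Q)^{-1/n}=[w]_{A_1}^{1/n}\,\ell(Q)\,w(Q)^{-1/n}.
\]
Multiplying, we get a.e. on $Q$
\[
\frac{M^c(w\chi_Q)^{1/n'}}{w}\le [w]_{A_1}\,\ell(Q)\,w(Q)^{-1/n}.
\]
This is exactly the power $[w]_{A_1}^1$ appearing in \eqref{Thm PS 5 Eq}.

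\textbf{Step 2: plug into \eqref{Thm PS 4 Eq}.} Since $L^{p,\infty}(Q,w)$ is a lattice quasi-norm, Theorem \ref{Thm PS 4} gives
\[
\|f-f_Q\|_{L^{p^*,\infty}(Q,w)}\le c_{n,p}[w]_{A_1}\ell(Q)\,w(Q)^{-1/n}\,\|\nabla f\|_{L^{p,\infty}(Q,w)}.
\]

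\textbf{Step 3: normalize.} For the probability measure $d\mu=\frac{w\,dx}{w(Q)}$ we have
\[
\|g\|_{L^{q,\infty}(Q,\mu)}=w(Q)^{-1/q}\|g\|_{L^{q,\infty}(Q,w)}.
\]
Dividing both sides by $w(Q)^{1/p^*}$ and using $\frac1{p^*}=\frac1p-\frac1n$, the powers of $w(Q)$ combine to
\[
w(Q)^{-1/p^*}\,w(Q)^{-1/n}=w(Q)^{-1/p}.
\]
This turns the right-hand norm into the normalized $L^{p,\infty}(Q,\mu)$ norm and yields \eqref{Thm PS 5 Eq}.

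\textbf{Main obstacle.} The only delicate point is Step 1: the $[w]_{A_1}$ factor must be split correctly between the maximal function and the local average. A careless estimate gives a worse power or no $\ell(Q)$ factor. The splitting above handles it, provided the a.e. inequalities hold on all of $Q$; they do by the definition of $A_1$.
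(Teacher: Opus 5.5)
Your proposal is correct and follows essentially the paper's route: deduce the estimate from Theorem~\ref{Thm PS 4}, use the $A_1$ condition to bound $M^c(w\chi_Q)^{1/n'}/w$ pointwise, and renormalize using $\frac{1}{p^*}+\frac{1}{n}=\frac{1}{p}$. The only difference is bookkeeping. The paper absorbs the factor $w(Q)^{1/n}$ via $\frac{w(Q)}{|Q|}\le \inf_{Q} M(w\chi_Q)$ to form $M^c(w\chi_Q)/w$, and then applies $A_1$ once. You instead split $w^{-1}=w^{-1/n'}w^{-1/n}$ and apply $A_1$ separately to each factor, which gives the same power $[w]_{A_1}^1$. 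Your unused ``trivial bound'' with balls is unnecessary, and it is only correct up to a dimensional constant, since $M^c$ is defined with cubes.
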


The unweighted global version of \eqref{Thm PS 5 Eq} is known for the Lebesgue measure \cite{Peetre}; see also \cite{Maly} for a different approach.

\begin{remark}
Regarding the optimal power of the $A_1$ constant in \eqref{Thm PS 5 Eq}, we will show that $\frac{1}{p}$ is a lower bound for \eqref{Thm PS 5 Eq} to hold. Indeed, if \eqref{Thm PS 5 Eq} were to hold with a smaller exponent $[w]_{A_1}^{\alpha}$, $\alpha<1/p$, then, since $\|\nabla f\|_{L^{p,\infty}\left(Q,w\right)} \le \|\nabla f\|_{L^{p}\left(Q,w\right)}$, the same bound would follow with the strong $L^p$ norm of the gradient on the right-hand side. Combining this with the truncation method would then yield an improvement of the sharp weighted Poincaré-Sobolev inequality obtained in \cite[Theorem 2.4]{ClarosJFA}, and hence lead to a contradiction.
\end{remark}

\subsection*{Outline of the paper}
The paper is organized as follows. In Section \ref{Section 2}, we collect definitions and known results on Lorentz spaces, Muckenhoupt weights, and the truncation method. In Section \ref{good lambda}, we prove our main self-improving result under the assumption $w\in A_r$, including the exponential endpoint, and we discuss sharpness. Section \ref{Section 4} contains the proofs of Theorems \ref{Selfimproving Ainfty} and \ref{Thm1}. In Sections \ref{Section 5} and \ref{Section 6}, we study the applications from Subsections \ref{Section PS} and \ref{Section FPS}, respectively. In Section \ref{Section Weak}, we provide the proof of the result stated in Subsection \ref{subsection weak}. Finally, the appendices contain extensions to polynomial-type oscillations, vector-valued inequalities, and the rectangular setting.

\section{Some preliminaries and known results}\label{Section 2}

\subsection{Lorentz spaces}

We briefly recall the definition and some basic properties of Lorentz spaces. Let $\mu$ be a Borel measure on $\R^n$ and let $E\subset\R^n$ be a $\mu$-measurable set. For $0<p<\infty$ and $0<q\le\infty$, the Lorentz space $L^{p,q}(E,\mu)$ consists of all measurable functions $f$ such that
\begin{equation}\label{Lpq quasinorm}
	\|f\|_{\strt{2.3ex} L^{p,q}(E,\mu)}
	:=\begin{cases}
		\left(p\displaystyle\int_0^\infty
		t^{q}\,\mu\big(\{x\in E:\ |f(x)|>t\}\big)^{\frac{q}{p}}\,\frac{dt}{t}\right)^{\frac{1}{q}},
		& 0<q<\infty,\\[1.1em]
		\displaystyle\sup_{t>0}\, t\,\mu\big(\{x\in E:\ |f(x)|>t\}\big)^{1/p}, & q=\infty,
	\end{cases}
\end{equation}
is finite. The functional $\|\,.\,\|_{L^{p,q}(E,\mu)}$ is a norm if and only if either $1\le q\le p$ or $p=q=\infty$; otherwise it is a quasi-norm. Moreover, if $1<p<\infty$ and $1\le q\le\infty$ (as well as in the endpoint cases $p=q=1$ and $p=q=\infty$), the quasi-norm \eqref{Lpq quasinorm} is equivalent to a norm on $L^{p,q}(E,\mu)$; see \cite{BS}.

We will use the following form of H\"older's inequality in Lorentz spaces (see \cite{KS10}): if $1\le p<\infty$, $1\le q\le\infty$, and $f\in L^{p,q}(E,\mu)$, $g\in L^{p',q'}(E,\mu)$, then
\begin{equation}\label{Holder Lorentz}
	\int_{E} |f(x)g(x)|\,d\mu(x)
	\le \|f\|_{\strt{2.3ex} L^{p,q}(E,\mu)}\,\|g\|_{ \strt{2.3ex} L^{p',q'}(E,\mu)},
\end{equation}
where $p'=\frac{p}{p-1}$ is the usual conjugate exponent (with the convention $p'=1$ if $p=\infty$ and $p'=\infty$ if $p=1$).

We also recall the nestedness of the scale with respect to the second index (see \cite{BS}): if $0<p\le\infty$ and $0<q_1\le q_2\le\infty$, then
\begin{equation}\label{eq:lorentz-nestedness}
	\|f\|_{\strt{2.3ex} L^{p,q_2}(E,\mu)} \le \left(\frac{q_1}{p}\right)^{\frac{1}{q_1}-\frac{1}{q_2}}
\|f\|_{\strt{2.3ex} L^{p,q_1}(E,\mu)}.
\end{equation}

Finally, we will use the standard embedding between Lorentz spaces on a probability space (see \cite{Hunt}). Let $(X,\nu)$ be a measure space with $\nu(X)=1$. If $0<p_1<p_2<\infty$ and $0<q<\infty$,
then
\begin{equation}\label{Lorentz inclusion}
	\|f\|_{\strt{2.3ex} L^{p_1,q}(X,\nu)}
	\le \Big(\frac{p_1p_2}{q(p_2-p_1)}\Big)^{\frac{1}{q}}\,\|f\|_{\strt{2.3ex} L^{p_2,\infty}(X,\nu)}.
\end{equation}

\subsection{Maximal operators and \texorpdfstring{$A_p$}{Ap} theory of weights}

In this section, we collect notation and basic properties of Muckenhoupt weights and maximal operators. A weight is a nonnegative, locally integrable function, usually denoted by $w$.

Given a locally integrable function $f\in L^1_{loc}(\R^n)$, the (uncentered) Hardy--Littlewood maximal operator is defined by
	\begin{equation*}
			M f(x) = \sup_{x\in Q}  \frac{1}{|Q|}\int_Q |f(y)|dy,
	\end{equation*}
		where the supremum is taken over all cubes $Q\subset \R^n$ with sides parallel to the coordinate axes and such that $x\in Q$. The centered version of the maximal operator is defined by
\begin{equation*}
	M^c f(x) = \sup_{r>0} \frac{1}{|Q(x,r)|} \int_{Q(x,r)}|f(y)|dy,
\end{equation*}
where $Q(x,r)$ denotes the cube centered at $x$ with side length $r$. Since Lebesgue measure is doubling, $M$ and $M^c$ are pointwise comparable; in particular, any estimate for one operator implies the corresponding estimate for the other with dimensional constants.

Let $1<p<\infty$. A weight $w$ belongs to $A_p$ if
\begin{equation*}
[w]_{A_p} :=\sup_Q \left(\frac{1}{|Q|}\int_Q w(x)\,dx\right)
\left(\frac{1}{|Q|}\int_Q w(x)^{1-p'}\,dx\right)^{p-1} <\infty,
\end{equation*}
where the supremum is over all cubes $Q$. For $p=1$ we write $w\in A_1$ if
\begin{equation*}
[w]_{A_1} :=\sup_Q \left(\frac{1}{|Q|}\int_Q w(x)\,dx\right)\,
\operatorname*{ess\,sup}_{x\in Q} w(x)^{-1} <\infty.
\end{equation*}
Equivalently,
\begin{equation*}
Mw(x)\le [w]_{A_1}\,w(x)\qquad\text{for a.e. }x\in\R^n.
\end{equation*}

The boundedness of $M$ on weighted spaces is characterized by these classes (see \cite{M}):
\begin{equation*}
	M:L^p(w)\longrightarrow L^p(w)\ \text{ if and only if }\ w\in A_p,\qquad 1<p<\infty,
\end{equation*}
and, 
\begin{equation*}
	M:L^p(w)\longrightarrow L^{p,\infty}(w)\ \text{ if and only if }\ w\in A_p, \qquad 1\le p<\infty.
\end{equation*}

We will frequently use the following property of $A_p$ weights: a weight $w$ belongs to the class $A_p$ if and only if  for every measurable function $f\ge 0$, we have
\begin{equation}\label{Ap 1}
	\frac{1}{|Q|}\int_Q f(x)dx \le [w]^\frac{1}{p}_{A_p} \left( \frac{1}{w(Q)}\int_Q f(x)^p w(x) dx\right)^\frac{1}{p}
\end{equation}
for every cube $Q$. In particular, taking $f=\chi_E$ for a measurable set $E\subset \R^n$, we obtain 
\begin{equation}\label{Ap 2}
	\frac{|E|}{|Q|}\le [w]^\frac{1}{p}_{A_p} \left( \frac{w(E)}{w(Q)}\right)^\frac{1}{p}.
\end{equation}

We will also consider the class of pairs of $A_p$ weights.

\begin{definition}\label{def:pair}
	Let $(u,v)$ be a pair of weights.
	\begin{enumerate}
		\item For $1<p<\infty$ we say that $(u,v)\in A_p$ if
			\begin{equation*}
				[u,v]_{A_p}= \sup_Q \left( \frac{1}{|Q|}\int_Q u(x)dx\right)\left( \frac{1}{|Q|}\int_Q v(x)^{1-p'} dx\right) ^{p-1} <\infty.
			\end{equation*}
		\item We say that $(u,v)\in A_1$ if
			\begin{equation*}
				[u,v]_{A_1}= \sup_Q \left( \frac{1}{|Q|}\int_Q u(x)dx\right) \underset{ Q}{\operatorname{ess} \sup} \left(v^{-1}\right)	<\infty,
			\end{equation*}
			or equivalently, if $Mu(x)\le [u,v]_{A_1} v(x)$ a.e. $x\in \R^n$.
	\end{enumerate}
\end{definition}

We write
\begin{equation*}
	A_\infty = \bigcup_{p\ge 1} A_p.
\end{equation*}
We will quantify membership of a weight in $A_\infty$ using the Fujii--Wilson constant
\begin{equation*}
	[w]_{A_\infty} = \sup_Q \frac{1}{w(Q)} \int_Q M(w\chi_Q)(x)dx,
\end{equation*}
where the supremum is taken over all cubes $Q\subset\R^n$.

We will also use a weight class naturally adapted to Lorentz norms, introduced by Chung, Hunt, and Kurtz in~\cite{CHK}.

\begin{definition}\label{def:CHK}
	Let $1<p<\infty$ and let $1\le q <\infty$. We denote by $A_{p,q}$ the class of weights $w$ such that
	\begin{equation*}
		[w]_{A_{p,q}}= \sup_Q \left( \frac{1}{|Q|}\int_Q w(x)dx \right) \left\| \frac{1}{w}\right\|_{\strt{2.3ex} L^{p',q'} \left( Q, \frac{w(x)dx}{|Q|}\right)}^p<\infty.
	\end{equation*}
\end{definition}
Using \eqref{Holder Lorentz}, one readily checks that $w\in A_{p,q}$ implies the following weighted estimate analogous to \eqref{Ap 1}: for every measurable $f\ge 0$ we have,
\begin{equation}\label{Apq}
	\frac{1}{|Q|}\int_Q f(x)dx \le [w]^\frac{1}{p}_{A_{p,q}} \left\| f \right\|_{L^{p,q} \left( Q, \frac{w(x)dx}{w(Q)}\right)}
\end{equation}
for every cube $Q$. In particular, for $1<p<\infty$ and $1<q<\infty$ one has $A_{p,q}=A_p$. At the endpoint $q=1$, the class $A_{p,1}$ strictly contains $A_p$ and is sometimes referred to as the restricted $A_p$ class, and denoted by $A_p^\mathcal R$.

\subsection{Truncation method}

The truncation method is a classical technique for upgrading weak-type Poincaré-Sobolev inequalities to stronger results along the Lorentz scale. In the first-order setting, this idea already appears in the work of Maz'ya \cite[p. 110]{Mazya}; see also \cite{Hajlasz, KO03} for more details. We will use a non-standard version of this argument, based on a trick from \cite{FPW98}, which allows us to keep the unweighted average $f_Q$ on the left-hand side. In order to do this, besides the weak-type Poincaré-Sobolev inequality, we require an additional unweighted $L^1$ Poincaré inequality.

\begin{theorem}\label{thm:truncation-method}
    Let $1\le p\le q<\infty$, $1\le r\le p$, let $Q\subset\R^n$ be a cube, and let $\mu$ and $\nu$ be positive Borel measures on $\R^n$ such that $\mu(Q)=1$. Assume that there exists a constant $C>0$ such that for every $f\in Lip(Q)$,
	\begin{equation*}
		\left\| f-f_Q\right\|_{\strt{2.3ex} L^{q,\infty}(Q,\mu)}\le C\left\|\nabla f \right\|_{\strt{2.3ex} L^{p,r}(Q,\nu) }
	\end{equation*}
	and
	\begin{equation*}
		\frac{1}{|Q|}\int_Q |f(x)-f_Q|\,dx\le C\left\|\nabla f \right\|_{\strt{2.3ex} L^{p,r}(Q,\nu) }.
	\end{equation*}
	Then, for every $f\in Lip(Q)$, we have
	\begin{equation*}
		\left\| f-f_Q\right\|_{\strt{2.3ex} L^{q,p}(Q,\mu)}\le 10\, C\left\|\nabla f \right\|_{\strt{2.3ex} L^{p,r}(Q,\nu) }.
	\end{equation*}
\end{theorem}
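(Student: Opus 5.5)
The plan is a dyadic layer-cake truncation of $|f-f_Q|$. Each truncated piece is fed into the weak-type hypothesis. The unweighted $L^1$ hypothesis is used only to control the averages of the truncations, which is the FPW98 device that lets us keep $f_Q$ rather than a median. Fix $Q$ and $f\in Lip(Q)$, and write $g=|f-f_Q|$ and $\Lambda=C\|\nabla f\|_{L^{p,r}(Q,\nu)}$. For $k\in\mathbb Z$ set
$$E_k=\{x\in Q: g(x)>2^k\},\qquad v_k=\min\{\max\{g-2^k,0\},2^k\}.$$
Each $v_k$ is Lipschitz, and $|\nabla v_k|\le|\nabla f|\chi_{F_k}$ with $F_k=\{2^k<g\le 2^{k+1}\}$. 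The sets $F_k$ are pairwise disjoint.

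\emph{Step 1 (averages of truncations).} By Chebyshev and the $L^1$ hypothesis, $|E_k|/|Q|\le \Lambda/2^k$. Since $0\le v_k\le 2^k\chi_{E_k}$, we get $(v_k)_Q\le 2^k|E_k|/|Q|$. Let $k_0$ be the least integer with $2^{k_0}\ge 4\Lambda$. Then for $k\ge k_0$ we have $(v_k)_Q\le 2^{k}/4$. On $E_{k+1}$ we have $v_k=2^k$, hence $|v_k-(v_k)_Q|\ge \tfrac34 2^k>2^{k-1}$ there.

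\emph{Step 2 (weak estimate on each layer).} For $k\ge k_0$, apply the weak hypothesis to $v_k$:
$$\mu(E_{k+1})\le \mu\{|v_k-(v_k)_Q|>2^{k-1}\}\le \Big(\frac{C\|\nabla f\,\chi_{F_k}\|_{L^{p,r}(Q,\nu)}}{2^{k-1}}\Big)^q.$$
Hence $2^{(k+1)p}\mu(E_{k+1})^{p/q}\le 4^pC^p\|\nabla f\chi_{F_k}\|^p_{L^{p,r}(Q,\nu)}$.

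\emph{Step 3 (summation).} Discretize the $L^{q,p}$ quasi-norm as $\|g\|^p_{L^{q,p}(Q,\mu)}\le c\sum_k 2^{kp}\mu(E_k)^{p/q}$. Split the sum at $k_0$. For $k\le k_0$, use $\mu(E_k)\le\mu(Q)=1$, so this part is a geometric sum bounded by $c\,2^{k_0p}\lesssim \Lambda^p$. For $k>k_0$, use Step 2, which leaves
$$\sum_k\|\nabla f\chi_{F_k}\|^p_{L^{p,r}(Q,\nu)}\le \|\nabla f\|^p_{L^{p,r}(Q,\nu)}.$$

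The main obstacle is this last superadditivity over disjoint supports. It is exactly where $r\le p$ is needed. Write $h_k(t)=t^r\nu\{|\nabla f|\chi_{F_k}>t\}^{r/p}$ and $s=p/r\ge1$. Because the supports are disjoint, the distribution functions add, so $\nu\{|\nabla f|>t\}^{r/p}=(\sum_k h_k(t)^s)^{1/s}t^{-r}$. The inequality then reads $\big\|\,\|h_k\|_{L^1(dt/t)}\big\|_{\ell^s}\le \big\|\,\|h_k(t)\|_{\ell^s}\big\|_{L^1(dt/t)}$, which is Minkowski's integral inequality. Finally, the constants must be tracked: the factor $p$ in the Lorentz normalization, the $4^p$ from Step 2 and the geometric sum in Step 3. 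They should combine to at most $10C$, possibly after choosing the threshold $4\Lambda$ more carefully.
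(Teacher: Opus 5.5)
Your argument follows the route the paper indicates: the Franchi--P\'erez--Wheeden truncation, followed by superadditivity over the disjoint layers $F_k$. In that truncation the unweighted $L^1$ hypothesis serves only to make $(v_k)_Q$ small. Your Minkowski computation is exactly Lemma~\ref{Lemma Lorentz} with $\alpha=p$, which is where $r\le p$ enters. Steps 1--3 are correct.

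The gap is your last sentence: the constants do not combine to $10C$, and no choice of threshold can make them. The normalising factor in
\[
\|g\|_{L^{q,p}(Q,\mu)}^p=q\int_0^\infty t^{p}\,\mu(\{g>t\})^{p/q}\,\frac{dt}{t}
\]
is the first index $q$, not $p$. So the discretisation reads $\|g\|^p_{L^{q,p}(Q,\mu)}\le \frac{q(2^p-1)}{p}\sum_k 2^{kp}\mu(E_k)^{p/q}$. For $k\le k_0$ use $2^{k_0}<8\Lambda$ and the geometric sum; for $k>k_0$ use Step 2 plus superadditivity. This gives
\[
\|f-f_Q\|_{L^{q,p}(Q,\mu)}\le \Big(\frac{q}{p}\Big)^{1/p}\big(16^p+8^p-4^p\big)^{1/p}\Lambda\le 20\Big(\frac{q}{p}\Big)^{1/p}C\,\|\nabla f\|_{L^{p,r}(Q,\nu)},
\]
which is not bounded by $10C$ uniformly in $q$.

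The factor $(q/p)^{1/p}$ is not an artefact of your bookkeeping: the statement with an absolute constant is false when $q/p$ is large. For $\mu=\delta_{x_0}$, every $h$ satisfies $\|h\|_{L^{q,p}(Q,\mu)}=(q/p)^{1/p}|h(x_0)|=(q/p)^{1/p}\|h\|_{L^{q,\infty}(Q,\mu)}$, while neither hypothesis depends on $q$.

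Concretely, take $n=1$, $Q=[0,1]$, $p=r=1$, $\nu=dx$ and $\mu=\delta_0$.
\begin{itemize}
\item Since $|f(x)-f_Q|\le\int_0^1|f(x)-f(y)|\,dy\le\|f'\|_{L^1(Q)}$ for every $x\in Q$, both hypotheses hold with $C=1$.
\item For $f(x)=\min\{x/\varepsilon,1\}$ you get $\|f'\|_{L^1(Q)}=1$.
\item But $\|f-f_Q\|_{L^{q,1}(Q,\mu)}=q\,|f(0)-f_Q|=q(1-\varepsilon/2)$, which exceeds $10$ once $q>10$ and $\varepsilon$ is small.
\end{itemize}
In higher dimensions the same happens with $p>n$: use Morrey's inequality for the first hypothesis and take $q$ large.

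So what your proof, and the paper's sketch, actually establishes is the theorem with $10C$ replaced by $c\,(q/p)^{1/p}C$, with $c$ absolute. In the applications $q=p_r^*$, and $p_r^*/p=nr/(nr-p)$ is unbounded as $p\to nr$, so this factor has to be carried into the constants there.

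Two small points remain. First, treat $\Lambda=0$ separately; then $f$ is constant on $Q$. Second, $|\nabla v_k|\le|\nabla f|\chi_{F_k}$ holds only Lebesgue-a.e., so you implicitly need $\nu\ll dx$, as the statement itself tacitly does.
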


The previous theorem can be proved by following the strategy in  \cite[Theorem 3.1]{FPW98} and using the summability property of Lorentz quasi-norms stated in Lemma \ref{Lemma Lorentz} below; this is also the point where the restriction $1\le r\le p$ enters.

In \cite{DLV21}, it was proved that the truncation method is applicable to the fractional seminorm. We will use the following version of the truncation method; this version combines the result of \cite{DLV21} and the weak-implies-strong argument of \cite{KO03} to improve it up to the Lorentz norm $L^{q,p}$. We also use the ideas of \cite[Theorem~3.1]{FPW98} in order to keep the unweighted average $f_Q$ on the left-hand side. We refer to \cite[Proposition A.2]{LoristWagenaar} for an explicit proof of a related result.

\begin{theorem}\label{Truncation method}
    Let $1\le p\le q<\infty$, let $Q\subset\R^n$ be a cube, let $\mu$ be a positive Borel measure on $\R^n$ such that $\mu(Q)=1$, and let $K:\R^n\times\R^n\to[0,\infty)$ be a positive measurable function. Assume that there exists $C>0$ such that for every $f\in Lip(Q)$,
	\begin{equation*}
		\left\| f-f_Q\right\|_{\strt{2.3ex} L^{q,\infty}(Q,\mu)} \le C\left(\int_Q\int_Q |f(y)-f(z)|^p K(y,z)\,dy\,dz\right)^\frac{1}{p},
	\end{equation*}
	and
	\begin{equation*}
		\frac{1}{|Q|}\int_Q |f(x)-f_Q|\,dx \le C\left(\int_Q\int_Q |f(y)-f(z)|^p K(y,z)\,dy\,dz\right)^\frac{1}{p}.
	\end{equation*}
	Then, for every $f\in Lip(Q)$, we have
	\begin{equation*}
		\left\| f-f_Q\right\|_{\strt{2.3ex} L^{q,p}(Q,\mu)} \le 130 \, C\left(\int_Q\int_Q |f(y)-f(z)|^p K(y,z)\,dy\,dz\right)^\frac{1}{p}.
	\end{equation*}
\end{theorem}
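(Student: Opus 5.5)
We follow the strategy of \cite[Theorem~3.1]{FPW98} combined with the weak-implies-strong argument of \cite{KO03} and the fractional truncation of \cite{DLV21}. Fix $f\in Lip(Q)$. Write $\Lambda(g)=\bigl(\int_Q\int_Q|g(y)-g(z)|^pK(y,z)\,dy\,dz\bigr)^{1/p}$. First we replace $f_Q$ by a median: pick $m$ with $|\{f\ge m\}\cap Q|\ge |Q|/2$ and $|\{f\le m\}\cap Q|\ge |Q|/2$. The unweighted $L^1$ hypothesis gives $|f_Q-m|\le \frac{2}{|Q|}\int_Q|f-f_Q|\le 2C\Lambda(f)$. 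Since $\mu(Q)=1$, the constant $f_Q-m$ has $L^{q,p}(Q,\mu)$ norm at most a fixed multiple of $|f_Q-m|$. So it suffices to bound $\|f-m\|_{L^{q,p}(Q,\mu)}$. Splitting $f-m=(f-m)_+-(f-m)_-$, we may treat $g=(f-m)_+$, which vanishes on a set of Lebesgue measure at least $|Q|/2$.

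For $k\in\mathbb Z$ we consider the truncations $g_k=\min\{(g-2^k)_+,2^k\}$, which are Lipschitz. Each $g_k$ vanishes on at least half of $Q$ and equals $2^k$ on $\{g\ge 2^{k+1}\}$. Because $g_k=0$ on half of $Q$, we have $|(g_k)_Q|\le \frac{2}{|Q|}\int_Q|g_k-(g_k)_Q|\le 2C\Lambda(g_k)$. Applying the weak hypothesis to $g_k$, together with the quasi-triangle inequality in $L^{q,\infty}$, gives
\begin{equation*}
2^k\mu(\{g\ge 2^{k+1}\})^{1/q}\le \|g_k\|_{L^{q,\infty}(Q,\mu)}\le c\,C\,\Lambda(g_k).
\end{equation*}
Discretizing the Lorentz quasi-norm dyadically yields
\begin{equation*}
\|g\|_{L^{q,p}(Q,\mu)}^p\lesssim\sum_k 2^{kp}\mu(\{g>2^{k}\})^{p/q}\lesssim C^p\sum_k\Lambda(g_k)^p.
\end{equation*}

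The main step, and the one specific to nonlocal seminorms, is the summation estimate
\begin{equation*}
\sum_k\Lambda(g_k)^p\le c\,\Lambda(g)^p\le c\,\Lambda(f)^p.
\end{equation*}
The second inequality is immediate because $t\mapsto(t-m)_+$ is $1$-Lipschitz. For the first, fix $y,z$ with $g(y)\ge g(z)$. The sum $\sum_k|g_k(y)-g_k(z)|^p$ only involves those $k$ for which the interval $[2^k,2^{k+1}]$ meets $[g(z),g(y)]$. Each term satisfies $|g_k(y)-g_k(z)|\le\min\{2^k,g(y)-g(z)\}$. If $g(y)\le 2g(z)$, at most two indices contribute, each by at most $|g(y)-g(z)|^p$. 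If $g(y)>2g(z)$, then $g(y)-g(z)\ge g(y)/2$, and the terms are bounded by a geometric sum $\sum_{2^k\le g(y)}2^{kp}\lesssim g(y)^p\lesssim |g(y)-g(z)|^p$. In both cases we get $\sum_k|g_k(y)-g_k(z)|^p\le c\,|g(y)-g(z)|^p$ pointwise. This is the argument of \cite{DLV21}, and integrating against $K$ gives the claim. Because this inequality is pointwise in $(y,z)$, the lack of locality in $K$ causes no difficulty, in contrast with the gradient case, which uses disjointness of the level-set layers.

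Collecting the estimates and applying the same bound to $(f-m)_-$ gives $\|f-f_Q\|_{L^{q,p}(Q,\mu)}\le c\,C\,\Lambda(f)$. The remaining work is to track the numerical constants: the factor $2$ from the median, the dyadic discretization constant, and the quasi-triangle constants. Done with explicit bounds, this should produce the stated value $130$. If keeping the constants explicit proves delicate, we would follow the bookkeeping of \cite[Proposition A.2]{LoristWagenaar}.
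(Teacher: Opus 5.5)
Your route is the one the paper points to. The paper gives no proof of its own, only references: \cite{FPW98} for keeping the unweighted average via a Lebesgue median and the unweighted $L^1$ hypothesis (needed because $\mu$ is arbitrary), \cite{DLV21} for summing truncations pointwise against a nonlocal kernel, and \cite{KO03} for the dyadic weak-to-strong step. Every qualitative step you write is correct. Your two-case summation can even be replaced by the constant $1$: $|g_k(y)-g_k(z)|$ is at most the length $\ell_k$ of $[g(z),g(y)]\cap[2^k,2^{k+1}]$, and $\sum_k \ell_k^p\le (\sum_k\ell_k)^p=|g(y)-g(z)|^p$ for $p\ge 1$.

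The gap is your final claim that the bookkeeping gives $130$. Under the paper's normalization \eqref{Lpq quasinorm}, two of your steps hide a factor depending on $q/p$. First, a constant $c$ on $Q$ has $\|c\|_{L^{q,p}(Q,\mu)}=(q/p)^{1/p}|c|$, which is not a fixed multiple of $|c|$. Second, the dyadic discretization reads $\|g\|_{L^{q,p}(Q,\mu)}^p\le \frac{q}{p}(2^p-1)\sum_k 2^{kp}\mu(\{g>2^k\})^{p/q}$. Tracking your constants gives:
\begin{itemize}
\item $4$ from the discretization and index shift;
\item $4$ from $\|g_k\|_{L^{q,\infty}}\le 2\max\{\|g_k-(g_k)_Q\|_{L^{q,\infty}},(g_k)_Q\}\le 4C\Lambda(g_k)$;
\item $4$ from your two-case summation bound;
\item a factor $2$ for $(f-m)_\pm$, plus $2$ from the median.
\end{itemize}
The result is $130\,(q/p)^{1/p}\,C\,\Lambda(f)$, not $130\,C\,\Lambda(f)$.

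No argument can remove this factor. By \eqref{eq:lorentz-nestedness}, $\|h\|_{L^{q,p}(Q,\mu)}\ge (q/p)^{1/p}\|h\|_{L^{q,\infty}(Q,\mu)}$ for every $h$. Concretely, take $n=1$, $Q=[0,1]$, $\mu=dx$, $p=1$, $K(y,z)=|y-z|^{-1-\delta}$ and $q=1/(1-\delta)$.
\begin{itemize}
\item The weak hypothesis holds with an optimal constant $C_w\in(0,\infty)$, by the fractional Sobolev embedding.
\item Kolmogorov's inequality then gives the $L^1$ hypothesis, so both hold with $C=\frac{q}{q-1}C_w$.
\item A Lipschitz $h$ with $\|h-h_Q\|_{L^{q,\infty}}\ge \frac12 C_w\Lambda(h)$ satisfies $\|h-h_Q\|_{L^{q,1}}\ge q\|h-h_Q\|_{L^{q,\infty}}\ge \frac{q-1}{2}\,C\,\Lambda(h)$.
\item This exceeds $130\,C\,\Lambda(h)$ as soon as $q>261$, i.e.\ for $\delta$ close to $1$.
\end{itemize}

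So your proof establishes the inequality with $130\,(q/p)^{1/p}$ in place of $130$. The absolute constant $130$ is correct only if $L^{q,p}$ is normalized with prefactor $p$ instead of $q$ in \eqref{Lpq quasinorm}, so that constants on a probability space have norm $|c|$. Under that normalization your bookkeeping reproduces exactly $4\cdot 4\cdot 4\cdot 2+2=130$.
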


\subsection*{Notation}
As usual, $c$ denotes a positive constant, possibly varying from line to line. We write $c_{\alpha, \beta, ...}$ to denote a constant depending only on $\alpha, \beta,...\,$.

\section{Proof of Theorem \ref{Selfimproving Ar} and Sharpness} \label{good lambda}

We begin this section by establishing a refinement of \cite[Proposition 2.3]{CP}, which will be used in the proof of Theorem \ref{Selfimproving Ar} to prove the exponential-type estimate.

\begin{proposition}\label{prop:linearweakLptoexpL}
Let $(X,\mu)$ be a probability space and let $f\ge 0$ be measurable. Assume that there exist constants $\gamma>0$ and $p_0\ge 1$ such that for every $p>p_0$,
\begin{equation}\label{prop:eqweakLp}
\|f\|_{\strt{2.3ex} L^{p,\infty}(X,\mu)}\le \gamma\,p.
\end{equation}
Then $f\in \exp L (X,\mu)$, in the sense that 
\begin{equation*}
\mu(\{x\in X:\ f(x)>t\})\le e^{p_0} \exp\Big(-\frac{t}{e\,\gamma}\Big),
\end{equation*}
for every $t>0$. Moreover, there exists an absolute constant $C>0$ such that
\begin{equation*}
\|f\|_{\exp L(X,\mu)} \le C \, p_0 \, \gamma  .
\end{equation*}
\end{proposition}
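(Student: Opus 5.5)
The plan is to optimize the Chebyshev-type bound coming from the weak-$L^p$ hypothesis over the free parameter $p$. Fix $t>0$. By the definition of the weak norm in \eqref{Lpq quasinorm} and \eqref{prop:eqweakLp}, for every $p>p_0$,
\begin{equation*}
\mu(\{x\in X:\ f(x)>t\})\le \left(\frac{\|f\|_{L^{p,\infty}(X,\mu)}}{t}\right)^{p}\le \left(\frac{\gamma p}{t}\right)^{p}.
\end{equation*}
The function $p\mapsto (\gamma p/t)^p$ is minimized near $p=t/(e\gamma)$, where it equals $e^{-t/(e\gamma)}$. So we split into two cases.

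\emph{Case $t/(e\gamma)>p_0$.} We choose $p=t/(e\gamma)$, which is admissible. This gives $\mu(\{f>t\})\le e^{-p}=\exp(-t/(e\gamma))\le e^{p_0}\exp(-t/(e\gamma))$.

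\emph{Case $t/(e\gamma)\le p_0$.} Here we simply use $\mu(\{f>t\})\le \mu(X)=1$. Since $t/(e\gamma)\le p_0$, we have $e^{p_0}\exp(-t/(e\gamma))\ge 1$, so the bound holds trivially. (If one insists on $p>p_0$ strictly in the first case, take $p$ slightly larger and let it tend to the limit; this is harmless.)

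This proves the distributional estimate.

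For the Luxemburg norm, we integrate the tail bound via the layer-cake formula. For $\lambda>0$,
\begin{equation*}
\int_X\big(e^{f/\lambda}-1\big)\,d\mu=\int_0^\infty e^{s}\,\mu(\{f>\lambda s\})\,ds\le \int_0^\infty \min\Big\{1,\ e^{p_0}e^{-\lambda s/(e\gamma)}\Big\}e^{s}\,ds .
\end{equation*}
Take $\lambda=A\,p_0\,e\gamma$ with $A\ge 2$ to be fixed. On $s\le s_0:=1/A$ we use the bound $1$, which contributes at most $e^{1/A}-1\le 2/A$ for $A\ge 2$. On $s>s_0$ we have $\lambda s/(e\gamma)=Ap_0 s$, and the integrand is at most $e^{p_0}e^{-(Ap_0-1)s}$. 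Using $p_0\ge1$, so that $Ap_0-1\ge Ap_0/2$, this piece contributes at most
\begin{equation*}
e^{p_0}\,\frac{2}{Ap_0}\,e^{-(Ap_0-1)/A}\le \frac{2e}{Ap_0}\,e^{p_0-p_0}\le \frac{2e}{A}.
\end{equation*}
Hence the total integral is at most $(2+2e)/A\le 1$ once $A\ge 2+2e$. This gives $\|f\|_{\exp L}\le (2+2e)e\,p_0\gamma$, which is the claimed bound with an absolute constant $C$.

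The only delicate point is this second step: the prefactor $e^{p_0}$ must be absorbed into the choice of $\lambda$, so that the resulting constant is linear in $p_0$ rather than exponential. The truncation at $s_0=1/A$ is exactly what achieves this, because it makes the exponential decay $e^{-Ap_0 s}$ beyond $s_0$ cancel the factor $e^{p_0}$.
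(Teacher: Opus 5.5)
Your proof is correct, and the distributional estimate is proved exactly as in the paper. You take $p=t/(e\gamma)$ when this exceeds $p_0$, and otherwise use $\mu(X)=1$ together with $e^{p_0}e^{-t/(e\gamma)}\ge 1$. The parenthetical about taking $p$ slightly larger is unnecessary, since in that case $p=t/(e\gamma)>p_0$ already holds strictly.

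The paper stops after the distributional estimate and never writes out the bound $\|f\|_{\exp L(X,\mu)}\le C\,p_0\,\gamma$. Your layer-cake computation correctly supplies it, with $\lambda=A\,e\,p_0\gamma$ and the cut at $s_0=1/A$, where $e^{-Ap_0 s}$ absorbs the prefactor $e^{p_0}$. This gives the absolute constant $C=(2+2e)e$.
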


\begin{proof}
Fix $t>e\gamma p_0$. Choose $p=p(t):=\frac{t}{e\gamma}>p_0$, so that $\gamma p/t=e^{-1}$. Applying \eqref{prop:eqweakLp} with this choice of $p$ gives
\begin{equation*}
    \mu(\{x\in X: f(x)>t\}) \le \Big(\frac{1}{e}\Big)^{p(t)} = \exp\Big(-\frac{t}{e\gamma}\Big),
\end{equation*}
for every $t>e\gamma p_0$.  If $0<t\le e\gamma p_0$, then $\mu(\{x\in X: f(x)>t\})\le 1$ and hence
\begin{equation*}
    \mu(\{x\in X: f(x)>t\})
\le e^{p_0}\exp\Big(-\frac{t}{e\gamma}\Big),
\end{equation*}
since $\exp(-t/(e\gamma))\ge e^{-p_0}$ in the range $0<t\le e\gamma p_0$. Therefore, for every $t>0$,
\begin{equation*}
    \mu(\{x\in X: f(x)>t\})\le e^{p_0}\exp\Big(-\frac{t}{e\gamma}\Big).
\end{equation*}
\end{proof}

With Proposition \ref{prop:linearweakLptoexpL} in hand, we now turn to the proof of Theorem \ref{Selfimproving Ar}.

\begin{proof}[Proof of Theorem \ref{Selfimproving Ar}]
We first show that estimate (\ref{Thm Ar eq p>s}) follows directly from estimate (\ref{Thm Ar eq}) and Proposition \ref{prop:linearweakLptoexpL}. Suppose that $a\in SD_p^s(w)$ with $p \ge rs$. Then, for every $s_0 \ge s$,  
\begin{align}
 \sum\limits_{j}a(Q_j)^pw(Q_j) & \le \|a\|_{SD_{p}^{s}(w)}^p
\left(\frac{\left| \bigcup_j Q_j \right| }{|Q|}  \right)^{\frac{p}{s}}a(Q)^{p}w(Q) \nonumber \\
& \le \|a\|_{SD_{p}^{s}(w)}^p\left( \frac{\left| \bigcup_j Q_j \right| }{|Q|}\right)^{\frac{p}{s_0}}a(Q)^{p}w(Q), \nonumber 
\end{align}
for any family $\{Q_j\}_j$ of pairwise disjoint subcubes of the cube $Q$. Hence $\|a\|_{SD_{p}^{s_0}(w)} \le \|a\|_{SD_{p}^{s}(w)}$. For each $p < q$, we choose $s_0$ to be the constant satisfying the relation 
$$\frac{1}{p}-\frac{1}{q}=\frac{1}{r{s}_0}.$$ 
Since $p < rs_0$, applying estimate (\ref{Thm Ar eq}) gives 
\begin{equation*}
    \norm{f-f_Q}_{\strt{2.3ex} L^{q,\infty}\left(Q, \frac{w(x)dx}{w(Q)}\right)} \le c_n q  [w]_{A_\infty} [w]_{A_r}^{\frac{1}{p}}\left\| a\right\|_{SD_p^s(w)} a(Q),
\end{equation*}
for every $q > p \ge 1$.  We can apply Proposition \ref{prop:linearweakLptoexpL} with 
\begin{equation*}
    \gamma = c_{n}   [w]_{A_\infty} [w]_{A_r}^{\frac{1}{p}}\left\| a\right\|_{SD_p^s(w)} a(Q), 
\end{equation*}
and we obtain the desired result (\ref{Thm Ar eq p>s}).

We now prove the estimate (\ref{Thm Ar eq}). Fix a cube $Q$, our goal is to show that for every $t>0$,
\begin{equation}\label{displayThm Ar eq}
t^{p_{r,s}^{*}} \, w(\{x\in Q: |f(x)-f_Q|>t\})\leq  \|a\|^{p_{r,s}^{*}} (c_n \, p_{r,s}^{*}\, [w]_{A_\infty} [w]_{A_r}^{\frac{1}{rs}} a(Q))^{p_{r,s}^{*}}\,  \, w(Q).
\end{equation}
Denote by $M_Q$ the dyadic maximal operator localized in $Q$. Since 
\begin{equation*}
	|f(x)-f_Q| \leq M_Q (f-f_Q)(x),
\end{equation*}
we estimate the larger set 
\begin{equation*}
	\Omega_t = \{x\in Q: M_Q(f-f_Q)(x)>t\}.
\end{equation*} 
Let $\{Q_j\}_j$ be the maximal dyadic cubes forming $\Omega_t$. Then, either $\{Q_j\}_j =\{Q\}$ or the cubes $Q_j$ satisfy the condition
\begin{equation}{\label{Calderon-Zygmund decomposition}}
t < \frac{1}{|Q_j|}\int_{Q_j} {|f-f_Q|} \leq 2^n t    
\end{equation}
and for almost every $x \in Q \setminus \cup_j Q_j$, $|f(x)-f_Q| \le t$. Let $\kappa=2^n+1$. We claim that 
\begin{equation*}
	w(\Omega_{\kappa t}) \leq \sum_j w(E_{Q_j}),
\end{equation*}
where
\begin{equation*}
	E_{Q_j}= \{ x\in Q_j: M_{Q_j}(f-f_{Q_j})(x)>t\}.
\end{equation*} 
When $\{Q_j\}_j =\{Q\}$, this claim holds trivially. In the other case, since $\kappa > 1$, it is not difficult to show that, except for a set of null measure,
\begin{equation*}
	\Omega_{\kappa t} \subset \bigcup_{j}\{x \in Q_j : M_{Q_j}(f-f_Q)(x) > \kappa t\}.
\end{equation*}
Moreover, for each $j$, by (\ref{Calderon-Zygmund decomposition}), we have $|f_{Q_j} - f_Q| \le 2^n t$. Applying these two observations, we have, for every $x \in Q_j$ satisfying $M_{Q_j}(f-f_Q)(x) > \kappa t$, 
\begin{equation*}
	M_{Q_j}(f-f_{Q_j}) \ge |M_{Q_j}(f-f_{Q})-|f_Q-f_{Q_j}|| > (\kappa-2^n)t =  t.
\end{equation*}
This proves the claim.

Note that for every $\gamma >0$,
\begin{align*}
E_{Q_j}  \subseteq & \{x\in Q_j : M_{Q_j}(f-f_{Q_j})(x)>t, \, M_{Q}^\# f (x) \leq \gamma t\}  \cup \{ x\in Q_j :   M_{Q}^\# f (x) > \gamma t\} \\
 :=&  A_j \cup B_j,
\end{align*}
where 
\begin{equation*}
	M_{Q}^\# f (x) = \sup_{x\in R\in \mathcal{D}(Q)} \frac{1}{|R|}\int_R |f(y)-f_R|dy.
\end{equation*}
Thus, we have 
\begin{equation*}
	w(\Omega_{\kappa t}) \leq \sum_j w(E_{Q_j}) \leq \sum_j w(A_j) + \sum_j w(B_j).
\end{equation*}

For the sets $A_j$, we use the good-lambda type inequality  \cite[Corollary 1.4]{CP}.
\begin{equation*}
	\sum_j w(A_j) \leq c_1 e^{-\frac{c_2}{ [w]_{A_{\infty}} \gamma} } \sum_j w(Q_j) = c_1 e^{-\frac{c_2}{ [w]_{A_{\infty}} \gamma} } w(\Omega_t),
\end{equation*}
where $c_1,c_2>0$ are dimensional constants. On the other hand, for $B_j$ we can argue as follows. We have
\begin{align*}
\bigcup_j B_j \subseteq \{x\in Q: M_{Q}^\# f(x)>\gamma t\}:=\bigcup_i R_i,
\end{align*}
where $R_i$ are the maximal dyadic subcubes of $Q$ such that 
\begin{equation*}
	\gamma t < \frac{1}{|R_i|}\int_{R_i}|f(x)-f_{R_i}|dx .
\end{equation*}
By hypothesis \eqref{starting-point0}, we know that $\gamma t < a(R_i)$. We now use the condition that $a$ satisfies, namely $a\in SD_{p}^{s}(w)$. Then,
\begin{align}
	\sum_j w(B_j)  \leq & w\big( \{x\in Q: M_{Q}^\# f(x)>\gamma t\} \big) \nonumber\\
    = & w\left(\bigcup_i R_i\right) {\label{first step}}\\
	 \le & \left( \frac{1}{\gamma t}\right)^p\sum_i a(R_i)^p w(R_i) \nonumber \\
	 \le & \|a\|^p \left( \frac{1}{\gamma t}\right)^p \, \left(\frac{|\bigcup_iR_i|}{|Q|} \right )^{ \frac{p}{s} } a(Q)^p w(Q). {\label{middle step}} \\
\le & \|a\|^{p}[w]_{A_r}^{\frac{p}{rs}}\left( \frac{1}{\gamma t}\right)^p \, \left(\frac{w\left(\bigcup_iR_i\right)}{w(Q)} \right )^{\frac{p}{r s} } a(Q)^p w(Q). \nonumber
\end{align}
Hence, combining this with \eqref{first step} and recalling that
$$
\frac{1}{p_{r,s}^{*}}= \frac{1}{p}- \frac{1}{sr}, 
$$
we have,
\begin{equation*}
	\left(\frac{w(\bigcup_iR_i)}{w(Q)} \right)^{1-\frac{p}{rs}} \le [w]_{A_r}^{\frac{p}{rs}} \left( \frac{1}{\gamma t}\right)^{p}\|a\|^{p}a(Q)^{p},
\end{equation*}
and hence
\begin{equation*}
	\frac{w(\bigcup_iR_i)}{w(Q)} \le [w]_{A_r}^{\frac{p_{r,s}^{*}}{rs}} \left( \frac{1}{\gamma t}\right)^{p_{r,s}^{*}}\|a\|^{p_{r,s}^{*}}a(Q)^{p_{r,s}^{*}}.
\end{equation*}
Combining these results, it follows that,  
\begin{align*}
  w(\Omega_{\kappa t}) & \leq c_1 e^{-\frac{c_2}{ [w]_{A_{\infty}}\gamma } }w(\Omega_t) + 
[w]_{A_r}^{\frac{p_{r,s}^{*}}{rs}} \left( \frac{1}{\gamma t }\right)^{p_{r,s}^{*}} \|a\|^{p_{r,s}^{*}}a(Q)^{p_{r,s}^{*}} w(Q)
\end{align*}
and then 
\begin{align*}
(\kappa t)^{p_{r,s}^{*}} w(\Omega_{\kappa t}) & \leq c_1 e^{-\frac{c_2}{ [w]_{A_{\infty}}\gamma } }\, (\kappa t)^{p_{r,s}^{*}}w(\Omega_t) + 
[w]_{A_r}^{\frac{p_{r,s}^{*}}{rs}} \left( \frac{\kappa}{\gamma  }\right)^{p_{r,s}^{*}} \|a\|^{p_{r,s}^{*}}a(Q)^{p_{r,s}^{*}} w(Q).
\end{align*}

We define
\begin{equation*}
	\varphi(N) = \sup_{0<t\leq N} t^{p_{r,s}^{*}} w(\Omega_t),
\end{equation*}
since this function is increasing, we have
\begin{equation*}
	\varphi(N) \leq \varphi(\kappa N) \leq c_1 \kappa^{p_{r,s}^{*}} e^{-\frac{c_2}{ [w]_{A_{\infty}} \gamma} }\varphi(N) + 
[w]_{A_r}^{\frac{p_{r,s}^{*}}{rs}} \left( \frac{\kappa}{\gamma}\right)^{p_{r,s}^{*}} \|a\|^{p_{r,s}^{*}}a(Q)^{p_{r,s}^{*}} w(Q).
\end{equation*}
Since the parameter $\gamma$ remains free throughout the derivation, we may set it such that  
\begin{equation*}
		c_1 \kappa^{p_{r,s}^{*}} e^{-\frac{c_2}{[w]_{A_{\infty}}\gamma}}=\frac 12,\end{equation*}
then 
\begin{equation*}
	\frac1{\gamma} \le   c_n[w]_{A_\infty} p_{r,s}^{*}.
\end{equation*}
This yields the estimate
\begin{equation*}
	\varphi(N) \le c_n^{p_{r,s}^{*}}[w]_{A_\infty}^{p_{r,s}^{*}}[w]_{A_r}^{\frac{p_{r,s}^{*}}{rs}}  ({p_{r,s}^{*}})^{p_{r,s}^{*}} \|a\|^{p_{r,s}^{*}}a(Q)^{p_{r,s}^{*}} w(Q).
\end{equation*}
Passing to the limit $N \to \infty$ in the above inequality yields \eqref{displayThm Ar eq}, concluding the proof of the theorem.
\end{proof}

Before addressing the sharpness of Theorem \ref{Selfimproving Ar}, we present a consequence of Proposition \ref{prop:linearweakLptoexpL}. Specifically, the following result shows that the condition $w\in A_\infty$ is necessary for the $D_p$-type self-improving property (Theorem \ref{Thm 1.6 CP}) to hold. More precisely, if the conclusion of Theorem \ref{Thm 1.6 CP} holds for every functional $a\in D_p(w)$, then $w$ must belong to $A_\infty$, with quantitative control of $[w]_{A_\infty}$.

\begin{proposition}\label{prop:Ainfty-necessity-CP}
	The assumption $w\in A_\infty$ in Theorem \ref{Thm 1.6 CP} is necessary. More precisely, let $w$ be a weight and assume that there exists a constant $C_w<\infty$ such that, for every $1\le p<\infty$, every functional $a\in D_p(w)$, and every locally integrable function $f$ satisfying
	\begin{equation*}
		\frac{1}{|Q|}\int_Q |f(x)-f_Q|\,dx \le a(Q)
	\end{equation*}
	for every cube $Q$, we have
	\begin{equation}\label{eq:CPnecessity-Ainfty}
		\left\| f-f_Q\right\|_{ L^{p,\infty}\left(Q,\frac{w(x)\,dx}{w(Q)}\right)}
		\le C_w p\, \left\|a\right\|_{D_p(w)} a(Q)
	\end{equation}
	for every cube $Q$. Then $w\in A_\infty$ with $[w]_{A_\infty}\le c_n C_w$.
\end{proposition}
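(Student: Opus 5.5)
The plan is to test the hypothesis on one simple functional and a family of logarithmic test functions. This produces a quantitative $A_\infty$ condition of the form $w(E)/w(Q)\lesssim (|E|/|Q|)^{\delta}$ with $\delta\simeq 1/C_w$, and we then convert it into a bound for the Fujii--Wilson constant.

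\emph{Choice of functional.} Take $a(Q)\equiv c_n$, a constant. It lies in $D_p(w)$ for every $p$ with $\|a\|_{D_p(w)}=1$, because $\sum_j w(Q_j)\le w(Q)$ for disjoint subcubes. For any $f$ with $\|f\|_{BMO}\le c_n$, the hypothesis \eqref{eq:CPnecessity-Ainfty} then gives
\[
\|f-f_Q\|_{L^{p,\infty}(Q,\frac{w\,dx}{w(Q)})}\le c_n C_w\, p
\]
for all $p\ge1$. Proposition \ref{prop:linearweakLptoexpL}, applied with $p_0=1$ and $\gamma=c_nC_w$, yields the uniform John--Nirenberg-type decay
\[
w(\{x\in Q:|f(x)-f_Q|>t\})\le e\,\exp\Big(-\frac{t}{c_n C_w}\Big)\,w(Q)
\]
for all $t>0$.

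\emph{Test functions.} Fix a cube $Q$ and a measurable set $E\subset Q$ with $|E|\le 2^{-N}|Q|$ for $N$ large. Put $f=\log M(\chi_E)$. By Coifman--Rochberg, $M(\chi_E)^{1/2}\in A_1$ with a dimensional constant, so $\|f\|_{BMO}\le c_n$. Moreover $f=0$ a.e.\ on $E$.

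We next bound $f_Q$ from above. The weak $(1,1)$ bound gives $|\{x\in Q: M\chi_E>\lambda\}|\le c_n|E|/\lambda$. With $\lambda=c_n'(|E|/|Q|)^{1/2}$, this set occupies at most half of $Q$. Hence a median $m_f$ of $f$ on $Q$ satisfies $m_f\le \tfrac12\log(|E|/|Q|)+c_n$. Since $|f_Q-m_f|\le c_n\|f\|_{BMO}$, we get $f_Q\le \tfrac12\log(|E|/|Q|)+c_n$.

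On $E$ we have $f-f_Q=-f_Q\ge \tfrac12\log(|Q|/|E|)-c_n$. The decay estimate therefore gives
\[
\frac{w(E)}{w(Q)}\le c\Big(\frac{|E|}{|Q|}\Big)^{\delta},\qquad \delta=\frac{1}{c_nC_w},
\]
for all $E\subset Q$. When $|E|/|Q|$ is not small the bound is trivial after enlarging $c$.

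\emph{From the power-type condition to $[w]_{A_\infty}$.} This is the step I expect to be the main obstacle: passing from $w(E)/w(Q)\le c(|E|/|Q|)^\delta$, with $c$ absolute, to $[w]_{A_\infty}\le c_n/\delta$ with linear dependence. I would first try the standard route. Apply the condition with $E=\{x\in Q: M_Q^d(w\chi_Q)>\lambda\}$, the union of maximal Calderón--Zygmund cubes at heights $\lambda=2^{nk}\langle w\rangle_Q$.

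At level $k+1$, the maximal cubes $P_i$ inside a level-$k$ cube $R$ have $w(P_i)>2^{n(k+1)}\langle w\rangle_Q|P_i|$ while $w(R)\le 2^{n(k+1)}\langle w\rangle_Q |R|$. Iterating the condition along the stopping tree gives geometric decay of $|\{M_Q^d(w\chi_Q)>2^{nk}\langle w\rangle_Q\}|$ relative to $w(Q)/(2^{nk}\langle w\rangle_Q)$. The factor in $k$ should be of the form $(1-c\delta)^{k}$ once $k\gtrsim 1/\delta$.

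Summing in $k$ then gives $\int_Q M(w\chi_Q)\le c_n\delta^{-1}w(Q)$, i.e.\ $[w]_{A_\infty}\le c_nC_w$. If the direct summation loses a power, the fallback is the known sharp quantitative equivalence between this Fujii--Wilson-type bound and the exponential/power $A_\infty$ condition, in the form used by Hytönen--Pérez.
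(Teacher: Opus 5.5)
Your proposal is correct, and after the first step it takes a genuinely different route from the paper. Both arguments begin the same way. A constant functional has $\|a\|_{D_p(w)}=1$, and Proposition~\ref{prop:linearweakLptoexpL} converts the linear-in-$p$ weak bounds into uniform exponential decay for every $f$ with $\|f\|_{\operatorname{BMO}}\lesssim 1$.

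The paper then keeps only the $L^1$ consequence, $\frac{1}{w(Q)}\int_Q|f-f_Q|\,w\,dx\le C\,C_w\|f\|_{\operatorname{BMO}}$. This is the embedding $\operatorname{BMO}\hookrightarrow \operatorname{BMO}_{w,w}$ with norm $\lesssim C_w$, and the paper finishes by citing the quantitative characterization of that embedding in \cite[Theorem 1.2]{OPRR}.

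You instead keep the full exponential decay but evaluate it only on the Coifman--Rochberg functions $\log M\chi_E$. This gives the power-type condition $w(E)/w(Q)\le c\,(|E|/|Q|)^{\delta}$ with $\delta\simeq 1/C_w$, and you then pass to the Fujii--Wilson constant by hand. The paper's proof is shorter and uses less information, but the real work is done by a nontrivial external theorem. Yours is self-contained and in effect reproves the half of that characterization that is needed here.

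The step you flag as the main obstacle is routine, and no iteration along the stopping tree is needed. Applying the power condition locally inside each stopping cube and iterating would actually make the constant $c$ compound from one generation to the next. Apply it once, on $Q$ itself, to each level set $E_\lambda=\{x\in Q: M(w\chi_Q)(x)>\lambda\}$.

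Weak type $(1,1)$ gives $|E_\lambda|\le c_n w(Q)/\lambda$. The refined estimate $|\{Mg>\lambda\}|\le \frac{c_n}{\lambda}\int_{\{|g|>\lambda/2\}}|g|$ with $g=w\chi_Q$, together with $w\chi_Q\le M(w\chi_Q)$ a.e., gives $|E_\lambda|\le \frac{c_n}{\lambda}\,w(E_{\lambda/2})$. Setting $A=2c_n w(Q)/|Q|$,
\[
\int_Q M(w\chi_Q)\,dx\le A|Q|+\int_A^\infty \frac{c_n}{\lambda}\,c\Big(\frac{A}{\lambda}\Big)^{\delta}w(Q)\,d\lambda=c_n\Big(2+\frac{c}{\delta}\Big)w(Q).
\]
This yields $[w]_{A_\infty}\le c_nC_w$ directly for the non-dyadic maximal function. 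There is no dyadic-to-continuous transfer to worry about and no need to appeal to Hyt\"onen--P\'erez.

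One small point to add. Your constant $c$ is absolute, and the term $2c_n$ above can be absorbed, only if $C_w$ is bounded below, since the decay estimate produces a factor of the form $e^{c/C_w}$. This holds automatically: test the hypothesis with $p=1$, $f=\chi_{\{x_1>0\}}$ and $a\equiv\frac12$ on a cube centred at the origin, which gives $C_w\ge1$.
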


\begin{proof}
	Assume that \eqref{eq:CPnecessity-Ainfty} holds for a weight $w$. Let $f\in \operatorname{BMO}$ and define $a(Q):=\|f\|_{\operatorname{BMO}}$ for every cube $Q$. Then,
	\begin{equation*}
		\frac{1}{|Q|}\int_Q |f(x)-f_Q|\,dx \le a(Q)
	\end{equation*}
	for every cube $Q$. Moreover, $a\in D_p(w)$ for every $1\le p<\infty$ and $\|a\|_{D_p(w)}= 1$. Applying \eqref{eq:CPnecessity-Ainfty}, we obtain, for every cube $Q$ and every $1<p<\infty$,
	\begin{equation*}
		\left\| f-f_Q\right\|_{ L^{p,\infty}\left(Q,\frac{w(x)\,dx}{w(Q)}\right)}
		\le C_w p\, \|f\|_{\operatorname{BMO}}.
	\end{equation*}
	By Proposition \ref{prop:linearweakLptoexpL}, it follows that
	\begin{equation*}
		\left\| f-f_Q\right\|_{\exp L\left(Q,\frac{w(x)\,dx}{w(Q)}\right)}
		\le C\,C_w\,\|f\|_{\operatorname{BMO}}.
	\end{equation*}
	  Hence $f\in \operatorname{BMO}_{w,w}$ and
	\begin{equation}\label{eq:CPnecessity-2}
		\|f\|_{\operatorname{BMO}_{w,w}}\le C\,C_w\,\|f\|_{\operatorname{BMO}},
	\end{equation}
    where $\|f\|_{\operatorname{BMO}_{w,w}} : = \sup_Q \frac{1}{w(Q)}\int_Q |f-f_Q| w$. Thus we have proved $\operatorname{Id}: \operatorname{BMO} \hookrightarrow \operatorname{BMO}_{w,w}$. However, by the characterization proved in \cite[Theorem 1.2]{OPRR}, the embedding $\operatorname{Id}: \operatorname{BMO} \hookrightarrow \operatorname{BMO}_{w,w}$ holds if and only if $w\in A_\infty$, therefore $w\in A_\infty$. Moreover, taking the supremum in \eqref{eq:CPnecessity-2} over all $f\in \operatorname{BMO}$ with $\|f\|_{\operatorname{BMO}}= 1$, and using again \cite[Theorem 1.2]{OPRR}, we obtain $[w]_{A_\infty}\le c_n C_w$.
\end{proof}
\ed

Now we discuss the sharpness of \eqref{Thm Ar eq} in Theorem {\ref{Selfimproving Ar}}.  In this context, sharpness means that the weak norm $L^{p_{r,s}^{*},\infty}$ on the left-hand side cannot be replaced by an $L^{q,\infty}$ norm for any $q > p_{r,s}^{*}$.   We construct a counterexample showing that  \eqref{Thm Ar eq} is sharp in the case $s\ge n$ and $w\in A_r$ with $1\le r\le p$.

We start with the case $s=n$ and $1\le r\le p< n r$. By using \eqref{I1 P11} and (\ref{Ap 1}), we have that for $w \in A_p$,
\begin{equation}{\label{weight 1,1 Poincare inequality}}
\frac{1}{|Q|}\int_Q |f(x)-f_Q|dx \le c_n [w]_{A_p}^\frac{1}{p} \ell(Q)\left(\frac{1}{w(Q)}\int_{Q}|\nabla f(x)|^p w(x) dx\right)^{\frac{1}{p}}:=a(Q).
\end{equation}
Moreover, from \cite[Lemma 3.2]{PR}, we know that $a \in SD_p^{n}(w)$. Then, applying Theorem {\ref{Selfimproving Ar}}, we obtain
\begin{equation} \label{Thm Ar eq-bis}
	\norm{f-f_Q}_{\strt{2.3ex}L^{p_r^{*},\infty}\left(Q, \frac{w(x)\,dx}{w(Q)}\right)} \le c_n p^*_r [w]_{A_\infty}\,[w]_{A_p}^{\frac{1}{p}}\,[w]_{A_r}^{\frac{1}{rn}}\, \ell(Q) \left( \frac{1}{w(Q)}\int_{Q}|\nabla f(x)|^p\, w(x)\,dx\right)^{\frac{1}{p}},
\end{equation}
for every cube $Q$, where $\frac{1}{p_r^{*}} = \frac{1}{p} - \frac{1}{nr}$, providing the proof of \eqref{self improving example 1}. 
The application of the truncation argument from Theorem \ref{thm:truncation-method} yields the proof of Corollary \ref{thm:PS-Lorentz-strong}. The proof of Corollary \ref{prop:pairs} is analogous and follows the same lines.

In the next proposition, we demonstrate that the exponent $p_r^{*}$ on the left-hand side of \eqref{Thm Ar eq-bis} cannot be improved, thereby establishing the sharpness of Theorem \ref{Selfimproving Ar} in the case $s=n$ and $p<nr$. 

For the sake of simplicity, in the following two propositions, we use the notation $A \lesssim B$ to indicate that there exists a constant $C$, depending on $n$, $p$, $q$, $[w]_{A_r}$, and on any fixed auxiliary cut-off function chosen in the proof, such that $A \le CB$.

\begin{proposition}{\label{counterexample proposition}} 
Suppose $1\le r\le p$ and $0<\frac{1}{q}<\frac{1}{p}-\frac{1}{nr}$. Let $Q=[-1,1]^n$. Then there exists $w \in A_r$ such that 
\begin{equation}{\label{counterexample}}
\norm{f-f_Q}_{\strt{2.3ex}L^{q,\infty}\left(Q, \frac{w(x)\, dx}{w(Q)}\right)} \lesssim \left( \frac{1}{w(Q)}\int_{Q}|\nabla f(x)|^p\, w(x)\,dx\right)^{\frac{1}{p}}    
\end{equation}
 {\bf cannot} hold for every $f \in C_c^{\infty}(\R^n)$.
\end{proposition}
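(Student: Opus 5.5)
Our plan is to use a power weight concentrated near a point together with a family of bumps concentrated at that point. Take $w(x)=|x|^{\alpha}$ with $\alpha=n(r-1)-\epsilon'$ for a small $\epsilon'>0$ when $r>1$; for $r=1$ take $\alpha=0$. For $r>1$ we need $-n<\alpha<n(r-1)$, and this choice satisfies it, so $w\in A_r$. Since $w$ is integrable on $Q=[-1,1]^n$, $w(Q)\simeq 1$.

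For the test functions, fix a cut-off $\phi\in C_c^\infty(\R^n)$ with $\phi=1$ on $B(0,1)$ and $\operatorname{supp}\phi\subset B(0,2)$. For $0<\delta<1/4$ set $f_\delta(x)=\phi(x/\delta)$.

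\emph{Right-hand side.} The gradient has size $\delta^{-1}$ on the annulus $\{\delta\le|x|\le2\delta\}$, where $w\simeq\delta^{\alpha}$. Hence
\begin{equation*}
\int_Q|\nabla f_\delta|^p w\,dx\lesssim \delta^{-p+n+\alpha}.
\end{equation*}

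\emph{Left-hand side.} We have $(f_\delta)_Q\lesssim\delta^n\le 1/2$ for small $\delta$. On $B(0,\delta)$ we have $f_\delta=1$, so $|f_\delta-(f_\delta)_Q|\ge1/2$ there, and
\begin{equation*}
w(B(0,\delta))\simeq\delta^{n+\alpha}.
\end{equation*}
Taking $t=1/4$ in the definition of the weak norm gives
\begin{equation*}
\|f_\delta-(f_\delta)_Q\|_{L^{q,\infty}(Q,w/w(Q))}\gtrsim \delta^{(n+\alpha)/q}.
\end{equation*}

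\emph{Comparison.} Inequality \eqref{counterexample} would force $\delta^{(n+\alpha)/q}\lesssim\delta^{(n+\alpha)/p-1}$ for all small $\delta$. This fails as $\delta\to0$ exactly when
\begin{equation*}
\frac{n+\alpha}{q}<\frac{n+\alpha}{p}-1,\quad\text{i.e.}\quad \frac1q<\frac1p-\frac{1}{n+\alpha}.
\end{equation*}
With $n+\alpha=nr-\epsilon'$, the hypothesis $\frac1q<\frac1p-\frac1{nr}$ is strict. Because the map $\epsilon'\mapsto \frac{1}{nr-\epsilon'}$ is continuous, we can pick $\epsilon'>0$ small enough that $\frac1q<\frac1p-\frac1{nr-\epsilon'}$ still holds. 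This produces the contradiction. For $r=1$ we have $n+\alpha=n$ and the inequality holds directly.

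We do not use the condition $r\le p$ in the construction itself; it only ensures that the relevant range of $q$ is nonempty and compatible with the context of Theorem \ref{Selfimproving Ar}. We must also check that $\frac1p-\frac1{nr-\epsilon'}>0$ stays available, which it does since $q<\infty$ forces $\frac1p>\frac1{nr}$.

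The main obstacle is choosing the weight so that it is genuinely in $A_r$ while its local dimension $n+\alpha$ gets arbitrarily close to $nr$. The strictness of the hypothesis on $q$ is what absorbs the $\epsilon'$ loss from the open endpoint of the power-weight $A_r$ range. The implicit constants depend on $[w]_{A_r}$ and $\phi$, as permitted by the convention stated before the proposition.
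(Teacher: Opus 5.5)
Your proposal is correct and is essentially the paper's proof: your weight $|x|^{\alpha}$ with $n+\alpha=nr-\epsilon'$ is the paper's $|x|^{\delta-n}$ with $\delta$ close to $nr$, the test functions are the same dilated bumps, and the contradiction comes from the same exponent condition $(n+\alpha)(\frac1q-\frac1p)<-1$. The only difference is that you bound the mean directly by the volume of the bump's support, which is simpler than the paper's detour through $B\subset Q\subset\widetilde B$ (which invokes the weighted $L^1$ Poincar\'e inequality, hence $w\in A_p$ and $r\le p$) and the annulus $E$, so you are right that $r\le p$ is not needed; note, though, that it is $p<nr$, not $r\le p$, that makes the range of $q$ nonempty.
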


\begin{proof}
Suppose (\ref{counterexample}) holds for every $f \in C_c^{\infty}(\R^n)$. Let $B=B(0,1)$ denote the open unit ball in $\mathbb{R}^n$ centered at the origin. Then $$B \subset Q \subset \sqrt{n}B =:\widetilde{B}.$$
Since an $A_p$ weight satisfies the doubling condition $w(2Q) \lesssim w(Q)$, applying \eqref{weight 1,1 Poincare inequality}, for every $f \in C_c^{\infty}(\R^n)$, we have
\begin{align}
\|f-f_B\|_{\strt{2.3ex}L^{q,\infty}\left(B, \frac{w dx}{w(B)}\right)} & \le \|f-f_Q\|_{\strt{2.3ex}L^{q,\infty}\left(B, \frac{w dx}{w(B)}\right)}+|f_B-f_Q| \nonumber \\   
& \lesssim \|f-f_Q\|_{\strt{2.3ex}L^{q,\infty}\left(Q, \frac{w dx}{w(Q)}\right)}+\frac{1}{|Q|}\int_{Q}|f-f_Q| \nonumber \\ 
& \lesssim \Big(\frac{1}{w(Q)}\int_{Q}|\nabla f|^p w dx\Big)^{1/p} \nonumber \\
& \lesssim \Big(\frac{1}{w(\widetilde{B})}\int_{\widetilde{B}}|\nabla f|^p w dx\Big)^{1/p}. {\label{counterexample ball}}
\end{align}

Let $w(x)=|x|^{\delta - n}$ with $0 < \delta < nr$, it is well known that $w$ is an $A_r$ weight. Let $\varphi$ be a smooth function supported in $B(0,2)$ and taking the value $1$ on $B(0,1)$. For $0 < \epsilon < \frac{1}{2}$, we define $f_\epsilon(x)=\varphi(\frac{x}{\epsilon})$. Since $f_\epsilon$ vanishes on $E=B(0,1) \backslash B(0,\frac{1}{2})$  and $q>1$, applying ({\ref{counterexample ball}}),
\begin{align}
\|f_\epsilon\|_{\strt{2.3ex} L^{q,\infty}\left(B, \frac{w dx}{w(B)}\right)} & \le \|f_\epsilon - (f_\epsilon)_B\|_{\strt{2.3ex} L^{q,\infty}\left(B, \frac{w dx}{w(B)}\right)} +|(f_\epsilon)_B| \nonumber \\
& \le \|f_\epsilon - (f_\epsilon)_B\|_{\strt{2.3ex} L^{q,\infty}\left(B, \frac{w dx}{w(B)}\right)} + \|f_\epsilon-(f_\epsilon)_B\|_{\strt{2.3ex} L^{q,\infty}\left(E, \frac{w dx}{w(E)}\right)} \nonumber \\
& \lesssim \|f_\epsilon - (f_\epsilon)_B\|_{\strt{2.3ex}L^{q,\infty}\left(B, \frac{w dx}{w(B)}\right)} \nonumber \\
& \lesssim \Big(\frac{1}{w(\widetilde{B})}\int_{\widetilde{B}}|\nabla f_\epsilon|^p w dx\Big)^{1/p}, {\label{counterexample equal}}
\end{align}
where we use the fact that $w(B(0,1)) \simeq \frac{1}{\delta} \simeq w(E)$. Note that $f_\epsilon = 1$ on $B(0,\epsilon)$ and $|\nabla f_\epsilon| \lesssim_{\varphi} \frac{1}{\epsilon}\chi_{B(0,2\epsilon)}$, we have
$$\|f_\epsilon\|_{\strt{2.3ex} L^{q,\infty}\left(B, \frac{w dx}{w(B)}\right)} \ge \Big(\frac{w(B(0,\epsilon))}{w(B)}\Big)^{1/q} \simeq \epsilon^{\delta/q} $$
and 
\begin{equation}{\label{counterexample upper bound}}
\Big(\frac{1}{w(\widetilde{B})}\int_{\widetilde{B}}|\nabla f_\epsilon|^p w dx\Big)^{1/p} \lesssim \frac{1}{\epsilon}\Big(\frac{w(B(0,2\epsilon))}{w(\widetilde{B})}\Big)^{1/p} \simeq \epsilon^{\delta/p-1}.    
\end{equation}

Applying estimate ({\ref{counterexample equal}}), we have
\begin{equation}{\label{counterexample epsilon}}
\epsilon^{\delta(\frac{1}{q}-\frac{1}{p})+1} \lesssim 1.    
\end{equation}
Since $\frac{1}{q}-\frac{1}{p} < -\frac{1}{nr}$, we can  choose $\delta$ sufficiently close to $nr$, such that $\delta(\frac{1}{q} - \frac{1}{p}) < -1$. In this case, estimate (\ref{counterexample epsilon}) fails if we let $\epsilon$ tend to $0$, and as a result, we arrive at a contradiction.
\end{proof}

Now, we are going to construct the counterexample for the case $s > n$ and $1\le r \le p < rs$. Similarly to the previous case, applying the fractional Poincaré inequality (\ref{PSF SP}) with $p=1$ and \eqref{Ap 2}, for $0 < \alpha <1$  and $w \in A_p$,
\begin{align*}
\frac{1}{|Q|}\int_Q |f-f_Q| dx \lesssim  \ell(Q)^{\alpha}\left(\frac{1}{w(Q)}\int_{Q}|\mathcal D_Q^{\alpha} f(x)|^p w(x) dx\right)^{\frac{1}{p}}:=a(Q),
\end{align*}
where 
$$\mathcal{D}_Q^{\alpha}f(x):=\int_{Q}\frac{|f(x)-f(y)|}{|x-y|^{n+\alpha}} dy\qquad x\in Q
$$
is the local fractional derivative of order $\alpha$. From \cite[Lemma 3.2]{PR}, we know that $a \in SD_p^{s}(w)$ with $s=\frac{n}{\alpha}>n$. Then, applying Theorem {\ref{Selfimproving Ar}}, we obtain
\begin{equation*}
	\norm{f-f_Q}_{\strt{2.3ex} L^{p_r^{*},\infty}\left(Q, \frac{w(x)\, dx}{w(Q)}\right)} \lesssim \ell(Q)^{\alpha}\left(\frac{1}{w(Q)}\int_{Q}|\mathcal D_Q^{\alpha} f(x)|^p w(x) dx\right)^{\frac{1}{p}},
\end{equation*}
where $\frac{1}{p_r^{*}} = \frac{1}{p} - \frac{1}{sr}$. The following proposition yields the sharpness of the estimate (\ref{Thm Ar eq}) in this case.

\begin{proposition}\label{prop: counterexample fractional p=1} 
Suppose $1\le r\le p$ and $0 < \frac{1}{q}<\frac{1}{p}-\frac{\alpha}{nr}$. Let $Q=[-1,1]^n$. Then there exists $w \in A_r$ such that 
\begin{equation*}
\norm{f-f_Q}_{\strt{2.3ex} L^{q,\infty}\left(Q, \frac{w(x)\, dx}{w(Q)}\right)} \lesssim \left(\frac{1}{w(Q)}\int_{Q}|\mathcal D_Q^{\alpha} f(x)|^p w(x) dx\right)^{\frac{1}{p}}   
\end{equation*}
cannot hold for every $f \in C_c^{\infty}(\R^n)$.
\end{proposition}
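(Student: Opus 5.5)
Argue by contradiction, exactly parallel to Proposition \ref{counterexample proposition}. Take $w(x)=|x|^{\delta-n}$ with $0<\delta<nr$, which lies in $A_r$, and assume the displayed inequality holds for all $f\in C_c^\infty(\R^n)$.

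\textbf{Step 1: pass from the cube to the ball.} Let $B=B(0,1)\subset Q$. As in \eqref{counterexample ball}, control $|f_B-f_Q|$ by $\frac{1}{|Q|}\int_Q|f-f_Q|$. By the $p=1$ fractional Poincaré inequality \eqref{PSF SP} combined with \eqref{Ap 1}, this average is bounded by the right-hand side $\big(\frac{1}{w(Q)}\int_Q|\mathcal D^\alpha_Q f|^p w\big)^{1/p}$. Doubling of $w$ then gives
\[
\|f-f_B\|_{L^{q,\infty}(B,\frac{w\,dx}{w(B)})}\lesssim \Big(\frac{1}{w(Q)}\int_Q|\mathcal D^\alpha_Q f|^pw\,dx\Big)^{1/p}.
\]
Here we do not enlarge the domain to $\widetilde B$, since $\mathcal D^\alpha_Q$ is tied to $Q$.

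\textbf{Step 2: the test functions and the lower bound.} Take $f_\epsilon(x)=\varphi(x/\epsilon)$ with $\varphi$ the same cutoff as before. The annulus $E=B\setminus B(0,\frac12)$ has $w(E)\simeq w(B)$, and $f_\epsilon=0$ on $E$. As in \eqref{counterexample equal}, this removes the average and yields the lower bound
\[
\|f_\epsilon\|_{L^{q,\infty}}\gtrsim \epsilon^{\delta/q}.
\]

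\textbf{Step 3: the upper bound for $\mathcal D^\alpha_Q f_\epsilon$ (main work).} We need the pointwise bound
\[
\mathcal D^\alpha_Q f_\epsilon(x)\lesssim \epsilon^{-\alpha}\min\{1,(\epsilon/|x|)^{n+\alpha}\}.
\]
\begin{itemize}
\item For $|x|\le 4\epsilon$: use $|f_\epsilon(x)-f_\epsilon(y)|\lesssim \min(1,|x-y|/\epsilon)$. Integrating against $|x-y|^{-n-\alpha}$ gives $\lesssim \epsilon^{-1}\epsilon^{1-\alpha}+\epsilon^{-\alpha}\simeq \epsilon^{-\alpha}$, using $\alpha<1$.
\item For $|x|>4\epsilon$: $f_\epsilon(x)=0$, and $f_\epsilon(y)\neq0$ only for $|y|<2\epsilon$, where $|x-y|\simeq|x|$. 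This gives $\lesssim \epsilon^n|x|^{-n-\alpha}$.
\end{itemize}
Integrating against $w=|x|^{\delta-n}$ then gives
\[
\int_Q|\mathcal D^\alpha_Q f_\epsilon|^pw\lesssim \epsilon^{-\alpha p}\epsilon^{\delta}+\epsilon^{np}\int_{4\epsilon}^{\sqrt n}\rho^{-(n+\alpha)p+\delta-1}\,d\rho .
\]
The last integral is $\lesssim \epsilon^{\delta-(n+\alpha)p}$ provided $(n+\alpha)p>\delta$. This holds because $\delta<nr\le np$. Hence the right-hand side is $\lesssim \epsilon^{\delta/p-\alpha}$.

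\textbf{Step 4: conclusion.} Combining Steps 2 and 3 gives $\epsilon^{\delta(\frac1q-\frac1p)+\alpha}\lesssim1$ uniformly in $\epsilon$. Since $\frac1q-\frac1p<-\frac{\alpha}{nr}$, choosing $\delta$ close to $nr$ makes the exponent $\delta(\frac1q-\frac1p)+\alpha$ negative. Letting $\epsilon\to0$ gives a contradiction.

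The main care point is the tail estimate in Step 3: the condition $(n+\alpha)p>\delta$ is what prevents the far-field contribution from dominating, and it holds automatically under $r\le p$.
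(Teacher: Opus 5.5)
Your proposal is correct and follows the paper's proof essentially step for step. It uses the same weight $|x|^{\delta-n}$, the same rescaled cutoffs $f_\epsilon$, the same pointwise bounds ($\lesssim\epsilon^{-\alpha}$ near the origin and $\lesssim\epsilon^{-\alpha}(\epsilon/|x|)^{n+\alpha}$ away from it), the same convergence condition $(n+\alpha)p>\delta$, and the same exponent comparison as $\delta\to nr$. The only cosmetic differences are that you integrate radially where the paper sums over dyadic annuli, and that you stay on $Q$ rather than passing to $\widetilde B$; that enlargement would in fact be harmless, since $\mathcal D^\alpha_Q f\le \mathcal D^\alpha_{\widetilde B} f$ pointwise on $Q$.
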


\begin{proof}

Analogously to the proof of Proposition \ref{counterexample proposition}, we let $w(x)=|x|^{\delta - n}$ with $0 < \delta < nr$, and let $\varphi$ be a smooth function supported in $B(0,2)$ such that $\varphi \equiv 1$ on $B(0,1)$. For $0 < \epsilon < \frac{1}{2}$, we define $f_\epsilon(x)=\varphi(x/\epsilon)$. The desired result then follows by adapting the strategy used in that proposition; the only difference appears in the estimate ({\ref{counterexample upper bound}}). We decompose 
\begin{align}
& \int_{\widetilde{B}}|\mathcal{D}^{\alpha}_{\widetilde{B}} f_\epsilon|^p |x|^{\delta -n}dx \nonumber \\
\le & \int_{B(0,4\epsilon)}|\mathcal{D}^{\alpha}_{\widetilde{B}} f_\epsilon|^p |x|^{\delta -n}dx + \sum\limits_{k=2}^{\infty}\int_{B(0,2^{k+1}\epsilon) \backslash B(0,2^k\epsilon)}|\mathcal{D}^{\alpha}_{\widetilde{B}} f_\epsilon|^p |x|^{\delta -n}dx.  \nonumber  
\end{align}
For $x \in B(0, 4\epsilon)$,
\begin{align}
\mathcal{D}^{\alpha}_{\widetilde{B}} f_\epsilon(x) & = \int_{\widetilde{B}}\frac{|f_\epsilon(x)-f_{\epsilon}(y)|}{|x-y|^{n+\alpha}} dy \nonumber \\
& = \int_{B(x,4\epsilon)}\frac{|f_\epsilon(x)-f_{\epsilon}(y)|}{|x-y|^{n+\alpha}} dy + \int_{\widetilde{B} \backslash B(x,4\epsilon)}\frac{|f_\epsilon(x)-f_{\epsilon}(y)|}{|x-y|^{n+\alpha}} dy \nonumber \\
& \le \frac{1}{\epsilon}\|\nabla \varphi\|_{\strt{2.3ex}L^{\infty}(\R^n)}\int_{B(x,4\epsilon)}\frac{|x-y|}{|x-y|^{n+\alpha}} dy + \|\varphi \|_{\strt{2.3ex} L^{\infty}(\R^n)} \int_{\R^n \backslash B(x,4\epsilon)}\frac{1}{|x-y|^{n+\alpha}} dy\nonumber \\
& \le \frac{1}{\epsilon}\|\nabla \varphi\|_{\strt{2.3ex} L^{\infty}(\R^n)}\int_{B(0,4\epsilon)}|y|^{-n-\alpha+1}dy + \|\varphi\|_{\strt{2.3ex} L^{\infty}(\R^n)} \int_{\R^n \backslash B(0,4\epsilon)}|y|^{-n-\alpha} dy\nonumber\\
& \lesssim \epsilon^{-\alpha}. \nonumber 
\end{align}
For $x \in B(0,2^{k+1}\eps)\backslash B(0,2^k\epsilon)$ with $k \ge 2$, we have $f_\varepsilon(x)=0$ and $|x-y| \simeq |x| \simeq 2^k\epsilon$ for $y \in B(0,2\epsilon)$. Then, 
$$ \mathcal{D}^{\alpha}_{\widetilde{B}} f_\epsilon(x)  = \int_{B(0,2\epsilon)}\frac{|f_{\epsilon}(y)|}{|x-y|^{n+\alpha}} dy \lesssim \|\varphi \|_{\strt{2.3ex}L^{\infty}(\R^n)}(2^k\epsilon)^{-n-\alpha}(2\epsilon)^{n} \lesssim (2^k)^{-n-\alpha}\epsilon ^{-\alpha}.$$
Combining everything together,
\begin{align}
&\Big(\frac{1}{w(\widetilde{B})}\int_{\widetilde{B}}|\mathcal{D}^{\alpha}_{\widetilde{B}} f_\epsilon|^p |x|^{\delta -n}dx\Big)^{1/p} \nonumber \\
& \qquad \lesssim  \delta^{1/p}\Big(\epsilon^{-\alpha p}\int_{B(0,4\epsilon)} |x|^{\delta -n}dx + \epsilon^{-\alpha p}\sum\limits_{k=2}^{\infty}(2^k)^{-np-\alpha p}\int_{B(0,2^{k+1}\epsilon) \backslash B(0,2^k\epsilon)} |x|^{\delta -n}dx\Big)^{1/p} \nonumber \\
& \qquad \lesssim  \epsilon^{-\alpha}  \Big( \epsilon^{\delta} + \epsilon^{\delta}\sum\limits_{k=2}^{\infty}(2^k)^{-np-\alpha p + \delta}\Big)^{1/p} \\
& \qquad \lesssim \epsilon^{\delta/p - \alpha} \nonumber.
\end{align}
With this estimate, we can arrive at a contradiction by following the argument in Proposition {\ref{counterexample proposition}}.
\end{proof}

\section{Proofs of self-improving results to \texorpdfstring{$L^{p(1+\frac{1}{s}),\infty}$}{L{p(1+1/s),inf}} generalized Poincaré-type inequality}\label{Section 4}

In this section, we provide the proof of the two $L^{p(1+\frac{1}{s}),\infty}$ generalized Poincaré-type inequalities with different assumptions. First, we provide the proof of Theorem \ref{Selfimproving Ainfty}, assuming $w\in A_\infty$.

\begin{proof}[Proof of Theorem \ref{Selfimproving Ainfty}]

The proof follows a similar strategy to that of Theorem \ref{Selfimproving Ar}; hence, we focus only on the necessary modifications.
Starting from (\ref{middle step}), using Minkowski's integral inequality, we obtain
\begin{equation*}
	\gamma t < \frac{1}{|R_i|}\int_{R_i}|f-f_{R_i}|  \leq \frac{1}{|R_i|}\int_{R_i} |f-f_{Q}| + |f_Q-f_{R_i}|
\leq  2\frac{1}{|R_i|}\int_{R_i}|f-f_{Q}|.
\end{equation*}
Hence,
\begin{equation*}
	\left| \bigcup_iR_i\right| =\sum_i |R_i| < \frac{2}{\gamma t} \sum_i \int_{R_i}|f-f_{Q}|
\leq \frac{2}{\gamma t} \int_{Q}|f-f_{Q}| \leq \frac{2}{\gamma t} a(Q)|Q|.
\end{equation*}
Then, 
\begin{align*}
	\sum_j w(B_j)  \leq &  \|a\|^p \left( \frac{1}{\gamma t}\right)^p \, \left( \frac{2}{\gamma t} a(Q)\right )^{ \frac{p}{s} } w(Q) a(Q)^p\\
= & \|a\|^p 2^{\frac{p}{s}}  \,  \left( \frac{1}{\gamma t} a(Q)\right )^{ p+\frac{p}{s} } w(Q).
\end{align*}

We proceed as before, combining everything together, and we get
\begin{align*}
 w(\Omega_{\kappa t}) & \leq c_1 e^{-\frac{c_2}{ [w]_{A_{\infty}}\gamma } }w(\Omega_t) + 
\|a\|^p 2^{\frac{p}{s}}  \,  \left( \frac{1}{\gamma t} a(Q)\right )^{ p+\frac{p}{s} } w(Q),
\end{align*}
and then 
\begin{align*}
(\kappa t)^{p+\frac{p}{s}} w(\Omega_{\kappa t}) & \leq c_1 e^{-\frac{c_2}{ [w]_{A_{\infty}}\gamma } }\, (\kappa t)^{p+\frac{p}{s}}w(\Omega_t) + 
\|a\|^p \left( \frac{\kappa}{\gamma }\right)^{p+\frac{p}{s}}  2^{\frac{p}{s}} 
w(Q) a(Q)^{p+\frac{p}{s}}.
\end{align*}

Defining
\begin{equation*}
	\varphi(N) = \sup_{0<t\leq N} t^{p + \frac{p}{s}} w(\Omega_t)
\end{equation*}
and choosing $\gamma$ satisfying
\begin{equation*}
	c_1 \kappa^{p+\frac{p}{s}} e^{-\frac{c_2}{\gamma [w]_{A_{\infty}}}}=\frac 12,
\end{equation*}
we obtain that 
\begin{equation*}
	\varphi(N) \leq c_n^{p+\frac{p}{s}}[w]_{A_\infty}^{p+\frac{p}{s}} \Big(p(1+\frac{1}{s})\Big)^{p+\frac{p}{s}}\|a\|^{p} w(Q) a(Q)^{p+\frac{p}{s}}. 
\end{equation*}
Finally, letting $N \to \infty$ completes the proof. 
\end{proof}

Now, we turn to the proof of Theorem \ref{Thm1}, which covers the case of general weights.

\begin{proof}[Proof of Theorem \ref{Thm1}] 
Let $0<p<\infty$. Fix a cube $Q$ and let $t>a(Q)$. Observe that by \eqref{Thm1 1} we have,
	\begin{equation*}
		\frac{1}{|Q|}\int_Q |f(x)-f_Q| dx \le a(Q) < t.
	\end{equation*}
We consider the local Calder\'on--Zygmund decomposition of the function $|f-f_Q|$ relative to the cube $Q$ at height $t$. Then, there is a family of dyadic pairwise disjoint subcubes $\{Q_j\}_j$ with respect to $Q$, which satisfy the following properties. For each $j$,
    \begin{equation}\label{Thm1 P CZ1}
	t<\frac{1}{|Q_{j}|}\int_{Q_{j}} |f(x)-f_Q| dx \le 2^n t,
	\end{equation}   
    and for almost every $x\in Q\setminus \bigcup_j Q_{j}$, 			
    \begin{equation*}
		|f(x)-f_Q| \le t.
	\end{equation*}
    From  the first initial hypothesis \eqref{Thm1 1}  and (\ref{Thm1 P CZ1}), 		\begin{equation}\label{Thm1 P CZ2}
		\frac{\left| \bigcup_j Q_j \right| }{|Q|}	\le \frac{1}{t|Q|}\int_{Q}|f-f_Q| \, dx \le \frac{a(Q)}{t}.
	\end{equation}
Let $\kappa=2^n + 1$. Since \eqref{Thm1 P CZ1} implies that $|f_{Q_j}-f_Q| \le 2^nt$, 
we have, 
\begin{align*}
w\left(  \left\lbrace x\in Q : |f(x)-f_Q| > \kappa t \right\rbrace \right)  & \le  \sum_j w\left(  \left\lbrace x\in Q_{j} : |f(x)-f_{Q_{j}}|+|f_{Q_j}-f_Q| > \kappa t \right\rbrace\right) \\
& \le  \sum_j w\left(  \left\lbrace x\in Q_{j} : |f(x)-f_{Q_{j}}| > t \right\rbrace\right). 
\end{align*}
	We can estimate the sum using the second hypothesis   \eqref{Thm1 2} at each cube $Q_j$ and then the fact that the functional $a$ satisfies the $SD_p^s(w)$ condition, 
		\begin{align*}
			w\left(  \left\lbrace x\in Q : |f(x)-f_Q| > \kappa t \right\rbrace \right)   \le & \sum_j w\left(  \left\lbrace x\in Q_{j} : |f(x)-f_{Q_{j}}| > t \right\rbrace\right)   \\
			\le & K^p t^{-p}  \sum_j    a(Q_{j})^p w(Q_{j})   \\
			\le &  K^p t^{-p} \left\| a \right\| ^p \left( \frac{\left| \bigcup_j Q_j \right|}{|Q|} \right)^\frac{p}{s} a(Q)^p w(Q)\\
			\le & K^p \left\| a \right\| ^p \left( \frac{a(Q)}{t}\right)^{p+\frac{p}{s}}  w(Q)\\
		\end{align*}
		where in the last inequality, we used \eqref{Thm1 P CZ2}.  Combining these estimates, we have proved 
		\begin{equation*}
			t \, w\left(  \left\lbrace x\in Q : |f(x)-f_Q| > \kappa t \right\rbrace \right)  ^\frac{1}{p+\frac{p}{s}}  \le  \left( K \left\| a \right\| \right)^{\frac{s}{s+1}}  w(Q)^\frac{1}{p+\frac{p}{s}} a(Q) 
		\end{equation*}
		for all $t>a(Q)$. This is equivalent to saying that 
		\begin{equation}\label{Thm1 P main estimation 2}
			\sup_{t>c_n a(Q)} t \left( \frac{1}{w(Q)} w\left(  \left\lbrace x\in Q : |f(x)-f_Q| >  t \right\rbrace \right) \right) ^\frac{1}{p+\frac{p}{s}}  \le  c_n \left( K \left\| a \right\| \right)^{\frac{s}{s+1}} a(Q) .
		\end{equation}
		On the other hand, 
		\begin{equation*}
			\sup_{t\le c_n a(Q) } t \left( \frac{1}{w(Q)} w\left(  \left\lbrace x\in Q : |f(x)-f_Q| >  t \right\rbrace \right) \right) ^\frac{1}{p+\frac{p}{s}}  \le  c_n a(Q) .
		\end{equation*}
		Hence,
		\begin{align*}
			\left\| f - f_Q \right\| _{L^{p+\frac{p}{s},\infty}\left( Q, \frac{w(x)dx}{w(Q)}\right) } 
			= & \max \left\lbrace \sup_{t\le c_n a(Q) } \cdots ,  \sup_{t>c_n a(Q)} \cdots   \right\rbrace \\
			\le & \max \left\lbrace c_n a(Q)  ,  c_n \left( K \left\| a \right\| \right)^{\frac{s}{s+1}}   a(Q)  \right\rbrace \\
			= & c_n \max \left\lbrace 1   ,  \left( K \left\| a \right\| \right)^{\frac{s}{s+1}}     \right\rbrace a(Q) .
		\end{align*}	
        This concludes the proof.
\end{proof}

As an immediate consequence of the argument, one also obtains a self-improvement along the Lorentz scale: under the assumptions of Theorem \ref{Thm1}, using \eqref{Lorentz inclusion} we have 
\begin{equation}\label{Thm1 Lpq 3}
		\left\| f - f_Q\right\|_{\strt{2.3ex} L^{p,q}\left( Q, \frac{w(x)\,dx}{w(Q)}\right)} \le c_n \left( \frac{p}{q}\right)^{\frac{1}{q}} \max\left\{ 1 , \left( K \left\| a \right\| \right)^{\frac{s}{s+1}} \right\} (1+s)^{\frac{1}{q}}\, a(Q),
	\end{equation}
for every $1\le q<\infty$. In particular, within this framework, the weak
estimate \eqref{Thm1 2} is, up to quantitative constants, equivalent to the family of strong Lorentz estimates \eqref{Thm1 Lpq 3}; this is noteworthy since the endpoint $q=1$ is a priori much stronger than $q=\infty$.

\section{Applications to degenerate Poincaré-Sobolev inequalities}\label{Section 5}

In this section, we investigate the Lorentz-type Poincar\'e--Sobolev inequalities stated in Section \ref{Section PS}. The proofs of strong-type results Corollary \ref{thm:PS-Lorentz-strong} and Corollary \ref{prop:pairs} have already been discussed before Proposition {\ref{counterexample proposition}}. To prove Corollaries \ref{thm:PS-Lorentz-weak} and \ref{thm:PS-exp}, we will need two auxiliary lemmas. The first one is a geometric property satisfied by Lorentz quasi-norms; it is in the spirit of the well-known result from \cite{CHK}.

\begin{lemma}\label{Lemma Lorentz}
		Let $(X,\mu)$ be a measure space. Let $1\le p,q<\infty$ and $\alpha\ge \max \{p,q\}$. Then we have		
        \begin{equation*}
			\sum_j \left\| f \right\|_{\strt{2.3ex} L^{p,q}\left(E_j , \mu\right)}^\alpha \le \left\| f \right\|_{\strt{2.3ex} L^{p,q}\left(\cup_j E_j , \mu \right)}^\alpha
		\end{equation*}
		for each pairwise disjoint family $\{E_j\}_j$ of $\mu$-measurable sets.
	\end{lemma}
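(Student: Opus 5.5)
The plan is to reduce everything to the additivity of distribution functions over disjoint sets, and then use an elementary convexity or concavity inequality. The case split is according to whether $q\ge p$ or $q<p$.

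\textbf{Setup.} Write $E=\bigcup_j E_j$ and, for a $\mu$-measurable set $F$, $\mu_F(t)=\mu(\{x\in F:|f(x)|>t\})$. Since the $E_j$ are pairwise disjoint, $\mu_E(t)=\sum_j\mu_{E_j}(t)$ for every $t>0$. By \eqref{Lpq quasinorm},
\begin{equation*}
\|f\|^q_{L^{p,q}(F,\mu)}=p\int_0^\infty \big(t^p\mu_F(t)\big)^{q/p}\,\frac{dt}{t}.
\end{equation*}
We also use the elementary fact that for nonnegative numbers $b_j$ and an exponent $\beta\ge 1$ one has $\sum_j b_j^\beta\le(\sum_j b_j)^\beta$. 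Because of this fact, it suffices to prove $\sum_j\|f\|^{\theta}_{L^{p,q}(E_j)}\le\|f\|^{\theta}_{L^{p,q}(E)}$ for some $\theta\le\alpha$. The case $\theta=\alpha$ is the claim itself. If $\theta<\alpha$, apply the fact with $\beta=\alpha/\theta\ge1$ and $b_j=\|f\|^\theta_{L^{p,q}(E_j)}$.

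\textbf{Case $q\ge p$.} Take $\theta=q$. Since $q/p\ge1$, the same elementary fact gives, pointwise in $t$,
\begin{equation*}
\sum_j\mu_{E_j}(t)^{q/p}\le\Big(\sum_j\mu_{E_j}(t)\Big)^{q/p}=\mu_E(t)^{q/p}.
\end{equation*}
Multiply by $p\,t^q$ and integrate against $dt/t$; the monotone convergence theorem lets us interchange sum and integral. This yields $\sum_j\|f\|^q_{L^{p,q}(E_j)}\le\|f\|^q_{L^{p,q}(E)}$, and since $\alpha\ge q$ the reduction above gives the claim.

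\textbf{Case $q<p$.} Take $\theta=p$ and set $h_F(t)=t^p\mu_F(t)$, so that $h_E=\sum_j h_{E_j}$. Then
\begin{equation*}
\|f\|^p_{L^{p,q}(F,\mu)}=p^{p/q}\Big(\int_0^\infty h_F(t)^{\rho}\,\frac{dt}{t}\Big)^{1/\rho},\qquad \rho=q/p\in(0,1).
\end{equation*}
The right-hand side is, up to the fixed factor $p^{p/q}$, the $L^\rho(dt/t)$ quasi-norm of $h_F$. For $0<\rho\le1$ this quantity satisfies the reverse Minkowski inequality on nonnegative functions, $\sum_j\|h_j\|_{L^\rho}\le\|\sum_j h_j\|_{L^\rho}$. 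This is the concavity of $g\mapsto\|g\|_{L^\rho}$ on the cone of nonnegative functions, combined with homogeneity, which makes it superadditive. We apply it first to finite sums and then pass to countable families by monotone convergence. Hence $\sum_j\|f\|^p_{L^{p,q}(E_j)}\le\|f\|^p_{L^{p,q}(E)}$, and since $\alpha\ge p$ the reduction finishes the proof.

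\textbf{Main obstacle.} The only nontrivial step is the case $q<p$. There the pointwise inequality goes the wrong way, since $\sum_j\mu_{E_j}^{q/p}\ge\mu_E^{q/p}$. This is why the proof must use the reverse Minkowski inequality for the $L^{q/p}$ quasi-norm and work with the exponent $p$ rather than $q$. This is also exactly where the hypothesis $\alpha\ge\max\{p,q\}$ is needed.
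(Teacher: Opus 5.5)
Your proof is correct, and it is essentially the paper's argument split into cases. The paper runs one chain for every $\alpha\ge\max\{p,q\}$: Minkowski's integral inequality in $\ell^{\alpha/q}$ applied to the sequence of integrals, then the embedding $\ell^{1}\hookrightarrow\ell^{\alpha/p}$ applied to $\{\mu_{E_j}(t)\}_j$ pointwise in $t$. Your three steps map onto this as follows: your case $q\ge p$ is that second step with $\alpha=q$; your reverse Minkowski inequality in $L^{q/p}(dt/t)$ is, after the substitution $g_j=h_{E_j}^{q/p}$, exactly Minkowski's integral inequality in $\ell^{p/q}$ (the first step with $\alpha=p$); and your preliminary reduction via $\sum_j b_j^\beta\le(\sum_j b_j)^\beta$ does the job that choosing general $\alpha$ inside the chain does in the paper.
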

	
	\begin{proof}
		Let $\{E_j\}_j$ be a pairwise disjoint family of $\mu$-measurable sets.  Since $\frac{\alpha}{q}\ge 1$, we can use Minkowski's integral inequality,
		
	\begin{align*}
			\left( \sum_j \left\| f \right\|_{\strt{2.3ex} L^{p,q}\left(E_j , \mu\right)}^\alpha \right)^\frac{q}{\alpha}  = & \left( \sum_j  \left( p \int_0^\infty t^q \mu\left(\{ x\in E_j : |f(x)| > t \} \right)^\frac{q}{p} \frac{dt}{t} \right)^\frac{\alpha}{q} \right)^\frac{q}{\alpha} \\
			= & \left\| \left\{ p \int_0^\infty t^q \mu \left(\{ x\in E_j : |f(x)| > t \} \right)^\frac{q}{p} \frac{dt}{t} \right\}_j \right\|_{\ell^\frac{\alpha}{q}}\\
			\le &  p \int_0^\infty t^q \left\| \left\{\mu\left(\{ x\in E_j : |f(x)| > t \} \right)^\frac{q}{p} \right\}_j \right\|_{\ell^\frac{\alpha}{q}}\frac{dt}{t} \\
			= & p \int_0^\infty t^q \left\| \left\{\mu\left(\{ x\in E_j : |f(x)| > t \} \right) \right\}_j \right\|_{\ell^\frac{\alpha}{p}} ^\frac{q}{p} \frac{dt}{t} \\
			\le & p \int_0^\infty t^q \left\| \left\{\mu\left(\{ x\in E_j : |f(x)| > t \} \right) \right\}_j \right\|_{\ell^1} ^\frac{q}{p} \frac{dt}{t} \\
			= & p \int_0^\infty t^q  \mu \left(\left\{ x\in 	\cup_j E_j : |f(x)| > t \right\} \right) ^\frac{q}{p} \frac{dt}{t} \\
			= & \left\| f \right\|_{\strt{2.3ex} L^{p,q}\left(\cup_j E_j , \mu \right)}^q,
		\end{align*}
		where, in the two inequalities above, we have used that $\frac{\alpha}{q}\ge 1$ and $\frac{\alpha}{p}\ge 1$ respectively.
	\end{proof}

The next lemma shows that the functional arising in our Lorentz-type estimates satisfies the corresponding geometric condition under the natural assumptions.

\begin{lemma}\label{Lemma Lpq SDp}
	Let $w\in A_r$ with $1\le r \le p$. For $1 \le M \le \infty$, we define $p_{M}^*$ by the relation
	\begin{equation*}
		\frac{1}{p}- \frac{1}{p_{M}^*} = \frac{1}{n}\frac{1}{rM}.
	\end{equation*}
We consider the following functional over cubes:	
	\begin{equation*}
		a(Q)= \ell(Q) \left\| g \right\|_{\strt{2.3ex} L^{p,q} \left( Q, \frac{w(x)\, dx}{w(Q)}\right)},
	\end{equation*}
	where $q\le p_{M}^*$. Then,
\begin{enumerate}
 	\item If $M>1$, then $a \in SD_{p_{M}^*}^{nM'}(w)$ with $\|a\|\le [w]_{A_r}^{\frac{1}{nrM}}$.
 	\item If $M=1$, then $p_{M}^*=p_r^{*}$  and $a\in D_{p^*_{r}}(w)$ with $\|a\|\le [w]_{A_r}^{\frac{1}{nr }}$.
 \end{enumerate}
\end{lemma}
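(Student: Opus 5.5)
Set $P=p_M^*$ and fix a cube $Q$ together with a family $\{Q_j\}_j$ of pairwise disjoint subcubes. The plan is to raise $a(Q_j)$ to the power $P$, multiply by $w(Q_j)$, and use the scaling of the Lorentz quasi-norm under normalization of the measure:
\begin{equation*}
\|g\|_{L^{p,q}(Q_j,\frac{w\,dx}{w(Q_j)})}=w(Q_j)^{-\frac1p}\,\|g\|_{L^{p,q}(Q_j,w)} .
\end{equation*}
This gives
\begin{equation*}
a(Q_j)^{P}w(Q_j)=\ell(Q_j)^{P}\,w(Q_j)^{1-\frac{P}{p}}\,\|g\|_{L^{p,q}(Q_j,w)}^{P}.
\end{equation*}
Since $1-\frac{P}{p}=-\frac{P}{nrM}$, the geometric factor is
\begin{equation*}
\ell(Q_j)^{P}w(Q_j)^{-\frac{P}{nrM}}=\Big(\ell(Q_j)^{n}\,w(Q_j)^{-\frac1{rM}}\Big)^{\frac{P}{n}} .
\end{equation*}
To control this factor I will use \eqref{Ap 2} with exponent $r$ applied to $E=Q_j\subset Q$, which gives
\begin{equation*}
\frac{|Q_j|}{|Q|}\le [w]_{A_r}^{\frac1r}\Big(\frac{w(Q_j)}{w(Q)}\Big)^{\frac1r},
\qquad\text{that is,}\qquad
w(Q_j)^{-\frac1r}\le [w]_{A_r}^{\frac1r}\,\frac{|Q|}{|Q_j|}\,w(Q)^{-\frac1r}.
\end{equation*}
Raising this to the power $\frac{P}{nM}$ bounds the geometric factor by
\begin{equation*}
[w]_{A_r}^{\frac{P}{nrM}}\,\ell(Q)^{\frac{P}{M}}\,\ell(Q_j)^{P(1-\frac1M)}\,w(Q)^{-\frac{P}{nrM}} .
\end{equation*}
The power $[w]_{A_r}^{1/(nrM)}$ produced here is the claimed bound on $\|a\|$.

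\emph{Case $M=1$.} The factor $\ell(Q_j)^{P(1-1/M)}$ is absent, so it remains to sum $\|g\|^{P}_{L^{p,q}(Q_j,w)}$. Because $P\ge p$ and $P\ge q$ (by assumption $q\le P$), Lemma \ref{Lemma Lorentz} with $\alpha=P$ bounds this sum by $\|g\|^{P}_{L^{p,q}(\cup_j Q_j,w)}\le\|g\|^{P}_{L^{p,q}(Q,w)}$. Reassembling via the scaling identity gives exactly $[w]_{A_r}^{P/(nr)}\,a(Q)^{P}w(Q)$, which is the $D_{p^*_r}(w)$ condition.

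\emph{Case $M>1$.} Here $\ell(Q_j)^{P/M'}=|Q_j|^{P/(nM')}$ remains, and I need Hölder in $j$ to extract $\big(|\cup_jQ_j|/|Q|\big)^{P/(nM')}$ in the $SD^{nM'}_{P}$ form. Write the sum as $\sum_j |Q_j|^{P/(nM')}\,b_j$ with $b_j=\|g\|^{P}_{L^{p,q}(Q_j,w)}$, and apply Hölder with exponents $u=nM'/P$ and $u'$ when $P<nM'$:
\begin{equation*}
\sum_j|Q_j|^{\frac{P}{nM'}}b_j\le\Big(\sum_j|Q_j|\Big)^{\frac{P}{nM'}}\Big(\sum_j b_j^{u'}\Big)^{\frac1{u'}} .
\end{equation*}
The remaining sum is $\sum_j\|g\|^{Pu'}_{L^{p,q}(Q_j,w)}$, and since $Pu'\ge P\ge\max\{p,q\}$ Lemma \ref{Lemma Lorentz} bounds it by $\|g\|^{Pu'}_{L^{p,q}(Q,w)}$. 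Dividing $|\cup_jQ_j|^{P/(nM')}$ by $|Q|^{P/(nM')}$, the leftover $|Q|^{P/(nM')}=\ell(Q)^{P/M'}$ combines with $\ell(Q)^{P/M}$ to give $\ell(Q)^{P}$. The powers of $w(Q)$ recombine to $w(Q)^{1-P/p}$, so the right side becomes $a(Q)^{P}w(Q)$ times the desired factors.

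The main obstacle is the range of the Hölder exponent. I expect $u=nM'/P>1$ to follow from $1\le r$ and the definition of $P$, since $1/P=1/p-1/(nrM)$, but this needs a careful check. If it fails, i.e.\ $P\ge nM'$, I would instead use $|Q_j|\le|\cup_jQ_j|$ together with $\sum_j b_j\le\|g\|^{P}_{L^{p,q}(Q,w)}$, at the cost of a weaker exponent on the ratio. Throughout I would also verify that the exponents of $w(Q)$ cancel.
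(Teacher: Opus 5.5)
Your reduction matches the paper's: the scaling $\|g\|_{L^{p,q}(Q_j,\frac{w\,dx}{w(Q_j)})}=w(Q_j)^{-1/p}\|g\|_{L^{p,q}(Q_j,w)}$, the identity $1-\frac{P}{p}=-\frac{P}{nrM}$, the $A_r$ bound \eqref{Ap 2} applied to $E=Q_j$, and Lemma \ref{Lemma Lorentz} with $\alpha=P$. Your case $M=1$ is correct as written.

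The problem is the case $M>1$. Your H\"older step needs $u=nM'/P>1$, i.e.\ $p_M^*<nM'$. This does \emph{not} follow from the hypotheses. Unwinding the definition of $P$, it is equivalent to
\[
\frac1p>\frac1n-\frac{1}{nM}\Big(1-\frac1r\Big).
\]
This fails, for instance, for $M=\infty$ (where $P=p$ and $nM'=n$) whenever $p\ge n$, and for $r=1$ and any $p\ge n$. This is not a marginal regime. In the proof of Corollary \ref{thm:PS-exp} the lemma is applied precisely when $p\ge nr$, where $p_M^*\ge rnM'\ge nM'$. So, as written, your argument establishes (1) only on part of the parameter range.

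The fix is the fallback you mention, but you misjudged it: it costs nothing. Since $|Q_j|\le|\bigcup_iQ_i|$ for every $j$, Lemma \ref{Lemma Lorentz} with $\alpha=P$ gives
\[
\sum_j|Q_j|^{\frac{P}{nM'}}b_j\le\Big|\bigcup_jQ_j\Big|^{\frac{P}{nM'}}\sum_jb_j\le\Big|\bigcup_jQ_j\Big|^{\frac{P}{nM'}}\|g\|^{P}_{L^{p,q}(Q,w)}.
\]
This is exactly the bound your H\"older step produces. It yields precisely the factor $\bigl(|\bigcup_jQ_j|/|Q|\bigr)^{P/(nM')}$ required by $SD^{nM'}_{P}(w)$, and it holds for every $M>1$ with no restriction relating $P$ and $nM'$. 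It is what the paper does: it pulls out $\sup_j|Q_j|^{P/(nM')}$ and sums the Lorentz norms with $\alpha=P$. Drop the H\"older step, use this bound for all $M>1$, and the proof is complete.
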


\begin{proof}
Let $Q$ be a cube and let $\{Q_j\}_j$ be a collection of pairwise disjoint subcubes of $Q$. Invoking \eqref{Ap 2}, which is a consequence of the $A_r$ condition, we obtain
		\begin{align*}
			\sum_j a(Q_j)^{p_{M}^*} w(Q_j)  = & \sum_j \ell(Q_j)^{p_{M}^*} \left\| g \right\|_{L^{p,q} \left( Q_j, \frac{wdx}{w(Q_j)}\right)} ^{p_{M}^*} w(Q_j) \\
			= & \sum_j |Q_j|^\frac{p_{M}^*}{n} \left(\frac{1}{w(Q_j)} \right)^{\frac{p_{M}^*}{nrM}} \left\| g \right\|_{\strt{2.3ex} L^{p,q} \left( Q_j, w\right)} ^{p_{M}^*}  \\
			= & \sum_j |Q_j|^{\frac{p_{M}^*}{n M'}}\left(\frac{|Q_j|^r}{w(Q_j)} \right)^{\frac{p_{M}^*}{nrM}} \left\| g \right\|_{\strt{2.3ex} L^{p,q} \left( Q_j, w\right)} ^{p_{M}^*}  \\
			\le & \left([w]_{A_r}\frac{|Q|^r}{w(Q)} \right)^{\frac{p_M^{*}}{nrM}} \sum_j |Q_j|^{\frac{p_{M}^*}{n M'}}\left\| g \right\|_{\strt{2.3ex} L^{p,q} \left( Q_j, w\right)} ^{p_{M}^*}. \\
            \le & \left([w]_{A_r}\frac{|Q|^r}{w(Q)} \right)^{\frac{p_M^{*}}{nrM}} \left( \sup_j |Q_j|^{\frac{p_{M}^*}{n M'}}\right) \sum_j \left\| g \right\|_{\strt{2.3ex} L^{p,q} \left( Q_j, w\right)} ^{p_{M}^*}.
		\end{align*}
		
		By hypothesis we have $q\le p^*_M$, so we can use Lemma \ref{Lemma Lorentz} with $\alpha=p^*_M$,
		\begin{align*}
			\sum_j a(Q_j)^{p_{M}^*} w(Q_j)  \le & \left([w]_{A_r}\frac{|Q|^r}{w(Q)} \right)^{\frac{p_M^{*}}{nrM}} \left| \bigcup_j Q_j\right|^{\frac{p_M^{*}}{n M'}}   \left\| g \right\|_{\strt{2.3ex}L^{p,q} \left( Q, w\right)} ^{p_M^{*}}  \\
			= & [w]_{A_r}^{\frac{p_M^{*}}{nrM}}  \left( \frac{\left| \bigcup_j Q_j\right|}{|Q|} \right)^{\frac{p_M^{*}}{n M'}} \ell(Q)^{p_M^{*}} \left\| g \right\|_{\strt{2.3ex}L^{p,q} \left( Q, \frac{wdx}{w(Q)}\right)}^{p_M^{*}} w(Q)  \\
			= &  [w]_{A_r}^{\frac{p_{M}^*}{nrM}}  \left( \frac{\left| \bigcup_j Q_j\right|}{|Q|} \right)^{\frac{p_{M}^*}{n M'}} a(Q)^{p_{M}^*} w(Q).
		\end{align*}
        This proves the first assertion for $1<M\le\infty$. In particular, when $M=\infty$, one has $p_M^*=p$, $M'=1$, and $[w]_{A_r}^{1/(nrM)}=1$. When $M=1$, the same computation gives $p_M^*=p_r^*$ and $1/M'=0$, hence 
        \begin{equation*}
            \sum_j a(Q_j)^{p_r^*}w(Q_j) \le [w]_{A_r}^{\frac{p_r^*}{nr}} a(Q)^{p_r^*}w(Q),
        \end{equation*}
        which is precisely the $D_{p_r^*}(w)$ condition with $\|a\|\le [w]_{A_r}^{1/(nr)}$.
	\end{proof}

The weighted Poincaré-Sobolev estimate \eqref{self improving example 1} provides a convenient example to compare the self-improvement phenomena associated with the conditions $a\in D_{p_r^{*}}(w)$ and $a\in SD_{p}^{n}(w)$. Denote by $a(Q)$ the right-hand side functional in \eqref{self improving example 1}, and for $M\ge 1$ we consider $p_M^{*}$ defined in Lemma~\ref{Lemma Lpq SDp}. We have proved that $a\in D_{p_r^{*}}(w)$ with $\|a\|\le [w]_{A_r}^{1/(nr)}$ and $a\in SD_{p_M^{*}}^{nM'}(w)$ with $\|a\|\le [w]_{A_r}^{1/(nrM)}$ for every $M>1$. Consequently, whether one applies the self-improving result under the $D_{p_r^{*}}(w)$ condition (Theorem \ref{Thm 1.6 CP}) or the one under the $SD_{p_M^{*}}^{nM'}(w)$ condition (Theorem \ref{Selfimproving Ar}), one recovers the same conclusion, namely \eqref{self improving example 1}. This shows the compatibility between the $D_p(w)$ and $SD_p^s(w)$ conditions in this natural model example.

Now, we apply the previous result to provide the proof of Corollary \ref{thm:PS-Lorentz-weak}.

\begin{proof}[Proof of Corollary \ref{thm:PS-Lorentz-weak}]
	Since $1 \le r \le p$ and $p_r^{*}>1$, we have $A_r\subset A_p = A_{p,p_r^{*}}$. Combining the $(1,1)$ Poincaré inequality \eqref{I1 P11} with the characterization of the $A_{p,q}$ condition given in \eqref{Apq}, applied with $g=|\nabla f|$, we obtain
		\begin{equation*}
			\frac{1}{|Q|}\int_Q |f(x)-f_Q|dx \le c_n  [w]_{A_{p,p_r^{*}}}^\frac{1}{p} \ell (Q)\left\| \nabla f \right\|_{\strt{2.3ex} L^{p,p_r^{*}}\left(Q, \frac{w(x)dx}{w(Q)}\right)}
		\end{equation*}
		for each cube $Q$. Now we consider the functional
		\begin{equation*}
			a(Q)= c_n [w]_{A_{p,p_r^{*}}}^\frac{1}{p} \ell(Q)\left\| \nabla f \right\|_{\strt{2.3ex} L^{p,p_r^{*}}\left(Q, \frac{w(x)dx}{w(Q)}\right)}.
		\end{equation*}	
		By Lemma \ref{Lemma Lpq SDp} (with $M=1$) we have $a\in D_{p_r^{*}}(w)$ and $\|a\|\le [w]_{A_r}^{1/(nr)}$. Therefore, we can apply Theorem \ref{Thm 1.6 CP} and this yields \eqref{eq:PS-Lorentz-weak}.
\end{proof} 

Let us emphasize that the previous argument is specific to the Lorentz norm $\|\cdot\|_{L^{p,p_r^{*}}}$ on the right-hand side. Indeed, in this setting we have $p_r^*=p_M^*$ when $M=1$, and this endpoint case corresponds to the condition $D_{p_M^*}(w)$ rather than to an $SD_p^s(w)$-type condition. For this reason, in the proof of Corollary \ref{thm:PS-Lorentz-weak}, we have used Theorem \ref{Thm 1.6 CP} instead of Theorem \ref{Selfimproving Ar}. By contrast, as we discussed before the proof of Corollary \ref{thm:PS-Lorentz-weak}, if we consider the self-improving result with the norm $\|\cdot\|_{L^{p}}$ on the right-hand side, then Theorem \ref{Selfimproving Ar} and Theorem \ref{Thm 1.6 CP} will lead to the same target exponent, since $(p_M^*)_{r,nM'}^*=p_r^*$.

To conclude this section, we will apply Theorem \ref{Selfimproving Ar} to prove Corollary \ref{thm:PS-exp}, providing an $\exp L$-type Poincaré-Sobolev inequality in the case of $p\ge nr$.

\begin{proof}[Proof of Corollary \ref{thm:PS-exp}]
Fix $1 \le q<\infty$. Choose $M>1$ so that $q\le p_M^{*}$, where $p_M^{*}$ is the exponent from Lemma \ref{Lemma Lpq SDp}. Consider the cube functional
	\begin{equation*}
			a(Q)= [w]_{A_{p,q}}^\frac{1}{p} \ell(Q)\left\| \nabla f \right\|_{\strt{2.3ex} L^{p,q}\left(Q, \frac{w(x)dx}{w(Q)}\right)}.
	\end{equation*}
By Lemma \ref{Lemma Lpq SDp} we have $a\in SD_{p_M^{*}}^{\,nM'}(w)$ with
$\|a\|\le [w]_{A_r}^{1/(nrM)}$. Since $p\ge nr$, it follows that $p_M^{*}\ge rnM'$. Therefore, the second part of Theorem \ref{Selfimproving Ar} applies and yields \eqref{eq:PS-exp}.
\end{proof}

\section{Applications to fractional Poincaré-Sobolev inequalities }\label{Section 6}

In this section, we investigate the fractional Poincaré-Sobolev inequalities. In order to consider two-weight $A_p$ conditions, we need the following result adapted from \cite{MPW}.

\begin{theorem}\label{thm:MPW23} 
    Let $0<\delta<1$, $1\le p<\infty$, $f\in L^1_{\operatorname{loc}}(\R^n)$, and let $(u,v)\in A_p$. Then there exists a dimensional constant $c_n>0$ such that
    \begin{equation}\label{MPW eq}
        \frac{1}{|Q|}\int_Q |f(x)-f_Q|\,dx \le c_n [u,v]_{A_p}^{\frac{1}{p}}(1-\delta)^{\frac{1}{p}}\ell(Q)^\delta \left(\frac{1}{u(Q)}\int_Q\int_Q \frac{|f(x)-f(y)|^p}{|x-y|^{n+\delta p}}\,dy\,v(x)\,dx\right)^{\frac{1}{p}}
    \end{equation}
    for every cube $Q\subset \R^n$.
\end{theorem}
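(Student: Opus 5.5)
The plan is to separate the proof into an \emph{unweighted} fractional Poincaré inequality in which the fractional term enters only through an $L^1$ average of a pointwise quantity, and a purely weighted step that uses the two-weight condition $(u,v)\in A_p$ through H\"older's inequality. For $x\in Q$ set
\begin{equation*}
g_Q(x):=\left(\int_Q \frac{|f(x)-f(y)|^p}{|x-y|^{n+\delta p}}\,dy\right)^{\frac1p}.
\end{equation*}
The first goal is the intermediate inequality
\begin{equation}\label{plan-L1-frac}
\frac{1}{|Q|}\int_Q |f(x)-f_Q|\,dx \le c_n (1-\delta)^{\frac1p}\,\ell(Q)^\delta\, \frac{1}{|Q|}\int_Q g_Q(x)\,dx .
\end{equation}
By H\"older's inequality, \eqref{plan-L1-frac} is stronger than the Bourgain--Brezis--Mironescu estimate \eqref{PSF SP}. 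This is the estimate I would take from \cite{MPW}, where it is proved in the unweighted setting; this matches the phrase ``adapted from \cite{MPW}''.

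Granting \eqref{plan-L1-frac}, the weighted conclusion is formal. For $p>1$, write $g_Q=g_Q v^{1/p}\,v^{-1/p}$ and apply H\"older with exponents $p,p'$:
\begin{equation*}
\frac{1}{|Q|}\int_Q g_Q \le \left(\frac{1}{|Q|}\int_Q g_Q^p v\right)^{\frac1p}\left(\frac{1}{|Q|}\int_Q v^{1-p'}\right)^{\frac1{p'}}
\le [u,v]_{A_p}^{\frac1p}\left(\frac{1}{u(Q)}\int_Q g_Q^p\, v\right)^{\frac1p}.
\end{equation*}
The last step uses $\bigl(\frac{1}{|Q|}\int_Q v^{1-p'}\bigr)^{p-1}\le [u,v]_{A_p}\,\frac{|Q|}{u(Q)}$, which is just the definition of $[u,v]_{A_p}$. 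For $p=1$, the same bound follows from $\frac{u(Q)}{|Q|}\le [u,v]_{A_1}\, v(x)$ for a.e.\ $x\in Q$. Inserting either bound into \eqref{plan-L1-frac}, and noting that $\int_Q g_Q^p v$ is exactly the double integral in \eqref{MPW eq}, gives the theorem.

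The main obstacle is \eqref{plan-L1-frac} itself, and specifically the gain $(1-\delta)^{1/p}$. Without that factor the estimate is elementary. Indeed,
\begin{equation*}
\frac{1}{|Q|}\int_Q |f-f_Q| \le \frac{1}{|Q|^2}\int_Q\int_Q |f(x)-f(y)|\,dy\,dx ,
\end{equation*}
and H\"older in $y$ gives $\int_Q |f(x)-f(y)|\,dy \le g_Q(x)\bigl(\int_Q |x-y|^{(n+\delta p)p'/p}dy\bigr)^{1/p'}\lesssim \ell(Q)^{n+\delta} g_Q(x)$. Hence \eqref{plan-L1-frac} holds with $(1-\delta)^{1/p}$ replaced by $1$. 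So the constant is harmless when $\delta$ stays away from $1$, and only the regime $\delta\to1^-$ needs care.

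For that regime I would follow the BBM-type strategy used in \cite{MPW}. First, telescope $f(x)-f_Q$ along the dyadic chain $Q=Q_0(x)\supset Q_1(x)\supset\cdots\ni x$, which bounds the oscillation by a sum over scales $2^{-k}\ell(Q)$ of local pair averages of $|f(x)-f(y)|$. Second, bound each scale's contribution through the portion of $g_Q(x)$ coming from the annulus $|x-y|\sim 2^{-k}\ell(Q)$. Third, recombine the scales with H\"older in $k$. The difficulty is to perform the scale summation without losing the factor $(1-\delta)$. The naive geometric sum produces $\delta^{-1}$, or even $(1-\delta)^{-1/p'}$, instead. The expected fix is to compare, at each scale, the averaged differences with those at smaller scales, as in BBM's ``$\lim_{\delta\to1}$'' computation. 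The kernel mass $(1-\delta)\int_{|h|<\rho}|h|^{p-n-\delta p}\,dh\simeq \rho^{p(1-\delta)}$ is then of unit size, and this normalization supplies exactly the $(1-\delta)^{1/p}$ gain. If this recombination cannot be closed cleanly, I would invoke the corresponding lemma of \cite{MPW} for \eqref{plan-L1-frac} as a black box, since the present statement is explicitly an adaptation of that work.
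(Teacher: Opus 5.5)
Your reduction is sound and matches the paper's structure. The weighted step via H\"older and the definition of $[u,v]_{A_p}$ is correct, including the case $p=1$. Your elementary bound, with $(1-\delta)^{1/p}$ replaced by $1$, is exactly the paper's treatment of $0<\delta\le\frac12$.

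The gap is the regime $\delta\to1^-$. That regime is the only non-routine content of the statement, and your proposal does not prove it.
\begin{itemize}
\item The dyadic telescoping, as you note yourself, only produces constants of size $\delta^{-1}$ with no vanishing factor. The proposed ``comparison with smaller scales'' is never carried out.
\item That comparison (essentially subadditivity in $h$ of $\int|f(x+h)-f(x)|\,dx$) only works after integrating in $x$. So it naturally yields integrated estimates of the type \eqref{PSF SP}. You do not explain how to reach the mixed $L^1_xL^p_y$ bound
\[
\frac{1}{|Q|}\int_Q|f-f_Q|\lesssim(1-\delta)^{1/p}\,\ell(Q)^\delta\,\frac{1}{|Q|}\int_Q g_Q
\]
that your weighted step needs.
\item The black-box fallback does not close this cleanly. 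The version in \cite{MPW} carries an extra factor $\delta^{-1/p'}$, and removing it is the point of the present statement. So it would have to be combined with your elementary bound for $\delta\le\frac12$, and it still outsources the heart of the proof.
\end{itemize}

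The missing idea is that you need not reprove Bourgain--Brezis--Mironescu at all. Use \eqref{PSF SP} with exponent $1$, where the gain is linear, at the lowered smoothness $\delta-\varepsilon$. Then pay for the passage to $L^p$ in $y$ with the slack $\varepsilon$. For $\frac12<\delta<1$ and $0<\varepsilon<\delta$,
\[
\frac{1}{|Q|}\int_Q|f-f_Q|\le c_n(1-\delta+\varepsilon)\,\ell(Q)^{\delta-\varepsilon}\,\frac{1}{|Q|}\int_Q\int_Q\frac{|f(x)-f(y)|}{|x-y|^{n+\delta-\varepsilon}}\,dy\,dx .
\]
For each fixed $x$, write $|x-y|^{-n-\delta+\varepsilon}=|x-y|^{-\frac np-\delta}\,|x-y|^{-\frac n{p'}+\varepsilon}$ and apply H\"older in $y$:
\[
\int_Q\frac{|f(x)-f(y)|}{|x-y|^{n+\delta-\varepsilon}}\,dy\le g_Q(x)\Bigl(\int_Q|x-y|^{-n+\varepsilon p'}\,dy\Bigr)^{\frac1{p'}}\le c_n\,\frac{\ell(Q)^{\varepsilon}}{\varepsilon^{1/p'}}\,g_Q(x).
\]
Choosing $\varepsilon=1-\delta$ gives the prefactor $(1-\delta+\varepsilon)\,\varepsilon^{-1/p'}=2(1-\delta)^{1/p}$. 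This is exactly your intermediate inequality; for $p=1$, take $\varepsilon=0$ directly. Together with your elementary case $\delta\le\frac12$ and your weighted step, this is the paper's proof.
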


\begin{proof}
Assume first that $0<\delta\le \frac12$. By H\"older's inequality, we have
\begin{align*}
    \frac{1}{|Q|}\int_Q |f-f_Q|\,dx
    &\le \frac{1}{|Q|^2}\int_Q\int_Q |f(x)-f(y)|\,dy\,dx \\
    &\le C_n\ell(Q)^\delta \frac{1}{|Q|}\int_Q \left(\int_Q \frac{|f(x)-f(y)|^p}{|x-y|^{n+\delta p}}\,dy\right)^{\frac{1}{p}} dx \\
    &\le C_n [u,v]_{A_p}^{\frac{1}{p}}\ell(Q)^\delta\left(\frac{1}{u(Q)}\int_Q\int_Q \frac{|f(x)-f(y)|^p}{|x-y|^{n+\delta p}}\,dy\,v(x)\,dx\right)^{\frac{1}{p}} .
\end{align*}
Since $(1-\delta)^{\frac{1}{p}}\ge 2^{-\frac{1}{p}}$ in this range, this gives the claimed bound.

   It remains to consider the case $\frac12\le \delta<1$. We follow the proof of the one-weight version in \cite[Corollary 5.2]{MPW}. Fix a cube $Q\subset \R^n$ and let $0\le \varepsilon\le \delta$. By \eqref{PSF SP} applied with parameter $\delta-\varepsilon$, we have
\begin{equation*}
    \frac{1}{|Q|}\int_Q |f(x)-f_Q|\,dx \le C_n(1-\delta+\varepsilon)\ell(Q)^{\delta-\varepsilon}\frac{1}{|Q|}\int_Q\int_Q \frac{|f(x)-f(y)|}{|x-y|^{n+\delta-\varepsilon}}\,dy\,dx .
\end{equation*}
If $p=1$, the conclusion follows from this estimate with $\varepsilon=0$ and the definition of the two-weight $A_1$ condition. We may therefore assume that $p>1$. For $0<\varepsilon\le \delta$, H\"older's inequality in the $y$ variable gives, for every $x\in Q$,
\begin{equation*}
    \int_Q \frac{|f(x)-f(y)|}{|x-y|^{n+\delta-\varepsilon}}\,dy \le C_n\frac{\ell(Q)^\varepsilon}{\varepsilon^{\frac{1}{p'}}}\left(\int_Q \frac{|f(x)-f(y)|^p}{|x-y|^{n+\delta p}}\,dy\right)^{\frac{1}{p}} .
\end{equation*}
Hence, applying H\"older's inequality again, we obtain
\begin{align*}
    \frac{1}{|Q|}\int_Q |f-f_Q|\,dx
    &\le C_n\ell(Q)^\delta\frac{1-\delta+\varepsilon}{\varepsilon^{\frac{1}{p'}}}\frac{1}{|Q|}\int_Q \left(\int_Q \frac{|f(x)-f(y)|^p}{|x-y|^{n+\delta p}}\,dy\right)^{\frac{1}{p}} dx \\
    &\le C_n [u,v]_{A_p}^{\frac{1}{p}}\ell(Q)^\delta\frac{1-\delta+\varepsilon}{\varepsilon^{\frac{1}{p'}}}\left(\frac{1}{u(Q)}\int_Q\int_Q \frac{|f(x)-f(y)|^p}{|x-y|^{n+\delta p}}\,dy\,v(x)\,dx\right)^{\frac{1}{p}} .
\end{align*}
Choosing $\varepsilon=1-\delta$ yields $\frac{1-\delta+\varepsilon}{\varepsilon^{\frac{1}{p'}}}=2(1-\delta)^{\frac{1}{p}}$. This proves the desired estimate also for $\frac12\le \delta<1$.
\end{proof}

\begin{remark}
In fact, the previous result is a small improvement of \cite[Corollary 5.2]{MPW}, where an extra factor $\delta^{-1/p'}$ appears on the right-hand side. The argument of considering the cases $\delta \in (0, \tfrac{1}{2})$ and $\delta \in [\tfrac{1}{2}, 1)$ separately to eliminate the factor $\delta^{-1/p'}$ can be found in the one-weight setting in \cite[p. 71]{ThesisIker}.
\end{remark}

We will use the following result from \cite{CMPR}. Although it is stated in the product space setting, the argument for cubes is identical; therefore, we state it below in that form.
  
 \begin{lemma}{\cite[Lemma 6.2]{CMPR}}{\label{CMPR Lemma}} 
 Let $w\in A_r$ with $1\le r\le  p$, let $0<\delta<1$ and let $a$ be defined by
 	 \begin{equation}\label{a2}
 		a(Q)=\ell(Q)^\delta \left( \frac{1}{w(Q)} \int_Q \int_Q g(x,y)\, dy \, dx \right)^\frac{1}{p}.
 	\end{equation}
 	
 	Let $1 \le M \le \infty$ and let $p^*_{r, M,\delta}$ be defined by the condition
 	\begin{equation}\label{p*M11}
		\frac{1}{p}-\frac{1}{p^*_{r, M,\delta}}=\frac{\delta}{n } \frac{1}{r M}.
	\end{equation}
 	
 	We have:
 	\begin{enumerate}
 		\item If $M>1$, then $a\in SD_{p^*_{r, M,\delta}}^{\frac{nM'}{\delta}}(w)$ with $\|a\|\le [w]_{A_r}^{\frac{\delta}{nr M}}$.
 		\item If $M=1$, then $a\in D_{p^*_{r, 1, \delta}}(w)$ with $\|a\|\le [w]_{A_r}^{\frac{\delta}{nr }}$.
 	\end{enumerate}
 \end{lemma}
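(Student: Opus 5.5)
The plan is to mimic the proof of Lemma \ref{Lemma Lpq SDp}, with the Lorentz summability of Lemma \ref{Lemma Lorentz} replaced by the plain additivity of the double integral.

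Fix a cube $Q$ and pairwise disjoint subcubes $\{Q_j\}_j$. Write $\theta=p^*_{r,M,\delta}$ and $G(E)=\int_E\int_E g(x,y)\,dy\,dx$, where $g\ge 0$ (in the application $g(x,y)=|f(x)-f(y)|^p|x-y|^{-n-\delta p}w(x)$). Since $\ell(Q_j)^\delta=|Q_j|^{\delta/n}$, we have
\begin{equation*}
a(Q_j)^{\theta} w(Q_j) = |Q_j|^{\frac{\delta\theta}{n}}\, w(Q_j)^{1-\frac{\theta}{p}}\, G(Q_j)^{\frac{\theta}{p}} .
\end{equation*}

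The first step is the exponent bookkeeping. Relation \eqref{p*M11} gives $1-\frac{\theta}{p}=-\frac{\delta\theta}{nrM}$. We then split
\begin{equation*}
|Q_j|^{\frac{\delta\theta}{n}}=|Q_j|^{\frac{\delta\theta}{nM'}}\,\bigl(|Q_j|^{r}\bigr)^{\frac{\delta\theta}{nrM}},
\end{equation*}
which is valid since $1/M+1/M'=1$. This yields
\begin{equation*}
a(Q_j)^{\theta} w(Q_j)=|Q_j|^{\frac{\delta\theta}{nM'}}\Bigl(\frac{|Q_j|^r}{w(Q_j)}\Bigr)^{\frac{\delta\theta}{nrM}}G(Q_j)^{\frac{\theta}{p}}.
\end{equation*}

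The second step uses \eqref{Ap 2} for the $A_r$ weight, with $E=Q_j\subset Q$. It gives $\frac{|Q_j|^r}{w(Q_j)}\le [w]_{A_r}\frac{|Q|^r}{w(Q)}$, and this factor can be pulled out of the sum. Next, bound $|Q_j|^{\frac{\delta\theta}{nM'}}\le |\bigcup_i Q_i|^{\frac{\delta\theta}{nM'}}$. The remaining task is to control $\sum_j G(Q_j)^{\theta/p}$.

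\textbf{Main point.} Since $\theta\ge p$, we have $\sum_j G(Q_j)^{\theta/p}\le\bigl(\sum_j G(Q_j)\bigr)^{\theta/p}$. Because the $Q_j\times Q_j$ are pairwise disjoint subsets of $Q\times Q$ and $g\ge0$, also $\sum_j G(Q_j)\le G(Q)$. This replaces Lemma \ref{Lemma Lorentz} and causes no difficulty.

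The last step reassembles the pieces:
\begin{equation*}
\sum_j a(Q_j)^\theta w(Q_j)\le [w]_{A_r}^{\frac{\delta\theta}{nrM}}\Bigl(\frac{|\bigcup_j Q_j|}{|Q|}\Bigr)^{\frac{\delta\theta}{nM'}}|Q|^{\frac{\delta\theta}{n}}\,w(Q)^{-\frac{\delta\theta}{nrM}}G(Q)^{\frac{\theta}{p}} .
\end{equation*}
Reversing the identity of the first step (now on $Q$) shows that the right-hand side equals
\begin{equation*}
[w]_{A_r}^{\frac{\delta\theta}{nrM}}\Bigl(\frac{|\bigcup_j Q_j|}{|Q|}\Bigr)^{\frac{\theta}{s}}a(Q)^\theta w(Q),\qquad s=\frac{nM'}{\delta}.
\end{equation*}

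For $M>1$, this is exactly the $SD^{nM'/\delta}_{\theta}(w)$ condition with $\|a\|\le [w]_{A_r}^{\delta/(nrM)}$. For $M=1$ we have $1/M'=0$, so the union factor disappears and we obtain $D_{\theta}(w)$ with $\|a\|\le[w]_{A_r}^{\delta/(nr)}$. The case $M=\infty$ gives $\theta=p$ and constant $1$, which is consistent.
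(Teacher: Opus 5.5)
Your proof is correct. It is essentially the argument the paper gives for Lemma~\ref{Lemma Lpq SDp}; the paper itself only cites \cite{CMPR} for this statement. You use the same exponent splitting, the same application of \eqref{Ap 2}, and the same bound $\sup_j|Q_j|\le|\bigcup_j Q_j|$, and in place of Lemma~\ref{Lemma Lorentz} you use the disjointness of the sets $Q_j\times Q_j$ together with $\theta\ge p$, which is the right substitute. As you implicitly assume, the argument needs $g\ge 0$ and $p<nrM/\delta$ so that $\theta$ is finite, and the hypothesis $r\le p$ is never used.
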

 
 We shall prove the two-weight version stated in Corollary \ref{thm:fractional-PS-pairs}, since the one-weight statement in Corollary \ref{thm:fractional-PS} follows by taking $w=u=v$.

\begin{proof}[Proof of Corollary \ref{thm:fractional-PS-pairs}]
We start with the subcritical estimate \eqref{MPW eq}. Fix a cube $Q$ and set $a(Q)$ to be the right-hand side of \eqref{MPW eq}, namely
\begin{equation*}
	a(Q):= c_n (1-\delta)^\frac{1}{p} [u,v]_{A_p}^\frac{1}{p} \ell(Q)^\delta \left( \frac{1}{u(Q)} \int_Q \int_Q \frac{|f(x)-f(y)|^p}{|x-y|^{n+\delta p }} dy\, v(x) dx\right)^\frac{1}{p}.
\end{equation*}
Then \eqref{MPW eq} yields the starting inequality
\begin{equation*}
	\frac{1}{|Q|}\int_Q |f-f_Q|\,dx \le a(Q),
\end{equation*}
for every cube $Q$. Since $a(Q)$ has the form \eqref{a2}, Lemma \ref{CMPR Lemma} implies that for each $1<M\le\infty$ the functional $a$ verifies $SD_{p^{*}_{r,M,\delta}}^{\,nM'/\delta}(u)$ with $\|a\|\le [u]_{A_r}^{\delta/(nrM)}$,  while for $M=1$ it satisfies the corresponding $D_{p^{*}_{r,1,\delta}}(u)$ condition with $\|a\|\le [u]_{A_r}^{\delta/(nr)}$, where the exponent $p^{*}_{r,M,\delta}$ is defined by \eqref{p*M11}. We may now apply Theorem~\ref{Selfimproving Ar} (for any $1<M\le\infty$) or Theorem \ref{Thm 1.6 CP} with $M=1$, to obtain
\begin{equation*}
	\|f-f_Q\|_{\strt{2.3ex} L^{p^*_{r,\delta},\infty}\left(Q,\frac{u(x)\,dx}{u(Q)}\right)} \le c_n p^*_{r, \delta}  [u]_{A_\infty} [u]_{A_r}^{\frac{\delta }{nr}}   \,a(Q),
\end{equation*}
for every cube $Q$. Finally, applying the truncation method for fractional seminorms (Theorem~\ref{Truncation method}) with $q=p^*_{r,\delta}$, $d\mu(x)=\frac{u(x)\,dx}{u(Q)}$ and $K(y,z)=\frac{\ell(Q)^{\delta p}}{u(Q)}\frac{v(y)}{|y-z|^{n+\delta p}}$, we obtain the desired result. 

The endpoint estimate \eqref{exp result fractional} follows directly from Theorem \ref{Selfimproving Ar} in the critical/supercritical regime, together with the fact that $a \in SD_p^{\,n/\delta}(u)$. This completes the proof.
\end{proof}

\ed

\begin{remark}{\label{Fractional PS sharpness}}
Using the counterexample we constructed in Proposition \ref{prop: counterexample fractional p=1}, we can see that Corollary \ref{thm:fractional-PS} is optimal within the range satisfying both conditions $ 1 \le r \le p $ and $ \frac{\delta}{n}p < r \le 1 + \frac{\delta}{n}p$; the details are left to the reader. 
\end{remark}

\section{Weak Poincaré-Sobolev inequality and applications to \texorpdfstring{$A_1$}{A1} weights}\label{Section Weak}

We present the proof of Theorem \ref{Thm PS 4} and Corollary \ref{Thm PS 5}. We will use the following weak Fefferman--Stein-type isoperimetric inequality.

\begin{theorem}\label{Thm FPW}
    Let $n\ge 2$ and let $\mu$ be a non-negative Borel measure. Then, there exists a dimensional constant $c_n>0$ such that for all Lipschitz functions $f$ we have
        \begin{equation}\label{Thm FPW Eq}
            \left\| f-f_Q\right\|_{\strt{2.3ex} L^{n',\infty}(Q,\mu)}
            \le c_n \int_Q |\nabla f(x)| M^c(\chi_Q\mu)(x)^\frac{1}{n'} dx,
        \end{equation}
    for each cube $Q$.
\end{theorem}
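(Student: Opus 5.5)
We plan to prove \eqref{Thm FPW Eq} by combining a pointwise representation of $f-f_Q$ by a fractional integral of the gradient with a duality argument for the weak $L^{n',\infty}(\mu)$ norm.

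First, we invoke the classical local representation formula on cubes: for Lipschitz $f$ and a.e. $x\in Q$,
\begin{equation*}
|f(x)-f_Q|\le c_n\int_Q \frac{|\nabla f(y)|}{|x-y|^{n-1}}\,dy =: c_n I_1(|\nabla f|\chi_Q)(x).
\end{equation*}
This reduces matters to estimating $\|I_1(g\chi_Q)\|_{L^{n',\infty}(Q,\mu)}$ with $g=|\nabla f|$.

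Next, we use the standard dual description of the weak norm. For $n'>1$ it suffices to show that, for every measurable $E\subset Q$ with $0<\mu(E)<\infty$,
\begin{equation*}
\int_E I_1(g\chi_Q)\,d\mu \le c_n\,\mu(E)^{1/n}\int_Q g(y)\,M^c(\chi_Q\mu)(y)^{1/n'}\,dy .
\end{equation*}
Indeed, applying this with $E=\{x\in Q: |f-f_Q|>t\}$ gives $t\,\mu(E)\le c_n\mu(E)^{1/n}\int g\,M^c(\chi_Q\mu)^{1/n'}$. This yields $t\mu(E)^{1/n'}\le\cdots$, which is the claim. By Fubini, the left side above equals $\int_Q g(y)\int_E |x-y|^{1-n}\,d\mu(x)\,dy$. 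It therefore suffices to prove the pointwise bound
\begin{equation*}
\int_E \frac{d\mu(x)}{|x-y|^{n-1}}\le c_n\,\mu(E)^{1/n}\,M^c(\chi_Q\mu)(y)^{1/n'},\qquad y\in Q .
\end{equation*}

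The pointwise bound is the main step, and we would prove it by the usual Hedberg-type splitting at a radius $R>0$. For the near part, decompose dyadically over annuli $2^{-k-1}R\le|x-y|<2^{-k}R$. Each annulus contributes at most $(2^{-k}R)^{1-n}\mu(Q(y,2^{1-k}R)\cap Q)$, which is bounded by $c_n(2^{-k}R)\,M^c(\chi_Q\mu)(y)$. Summing the geometric series gives $c_nR\,M^c(\chi_Q\mu)(y)$. For the far part, the kernel is at most $R^{1-n}$, so that piece is bounded by $R^{1-n}\mu(E)$. We then optimize by choosing $R^n=\mu(E)/M^c(\chi_Q\mu)(y)$, which gives $c_n\mu(E)^{1/n}M^c(\chi_Q\mu)(y)^{1-1/n}$, as required.

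The degenerate cases need only a brief check. If $M^c(\chi_Q\mu)(y)=\infty$, the bound is trivial. It cannot happen that $M^c(\chi_Q\mu)(y)=0$ while $\mu(E)>0$, since then $\mu(Q)=0$. The hypothesis $n\ge2$ ensures $n'<\infty$ and that $|x-y|^{1-n}$ is a genuine singular kernel. We expect the only delicate point to be the careful justification of the dual characterization and of Fubini for a general Borel measure $\mu$, which is routine.
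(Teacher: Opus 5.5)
Your proof is correct. The paper gives no proof of this theorem. It quotes it from \cite{FPW} and \cite{PR}, and notes that \cite{PR2} gives a different proof that avoids Minkowski's inequality. So the useful comparison is with that Minkowski-based argument.

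That argument starts from the same representation $|f-f_Q|\le c_n I_1(|\nabla f|\chi_Q)$. It then applies Minkowski's integral inequality in $L^{n',\infty}(Q,\mu)$, which is legitimate because this space is normable for $n'>1$. This reduces everything to the kernel bound $\| |\cdot-y|^{1-n}\|_{L^{n',\infty}(Q,\mu)}\lesssim M^c(\chi_Q\mu)(y)^{1/n'}$.

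Your argument is the unpacked, dual form of the same thing. You test directly against the level set $E_t$ and apply Tonelli. Your Hedberg-type bound $\int_E |x-y|^{1-n}\,d\mu(x)\le c_n\,\mu(E)^{1/n}M^c(\chi_Q\mu)(y)^{1/n'}$ then plays the role of the kernel estimate. What this buys is a fully elementary proof. In particular, the ``dual characterization'' you flag as delicate is never used: you only need the trivial inequality $t\,\mu(E_t)\le\int_{E_t}|f-f_Q|\,d\mu$.

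A few points to tighten:

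(a) The measure $\mu$ may be singular with respect to Lebesgue measure. So you need the representation formula $\mu$-a.e., not just Lebesgue-a.e. For Lipschitz $f$ it holds at every $x\in Q$. Fix $x$ and let $N$ be the Lebesgue-null set where $f$ is not differentiable. In polar coordinates about $x$, for a.e. direction $\omega$ the set $\{r>0: x+r\omega\in N\}$ is one-dimensionally null. Hence the fundamental theorem of calculus along rays from $x$ is available for every $x$.

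(b) On the annulus $2^{-k-1}R\le |x-y|<2^{-k}R$ the kernel is bounded by $(2^{-k-1}R)^{1-n}$, not $(2^{-k}R)^{1-n}$. This only costs a factor $2^{n-1}$.

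(c) The point $x=y$ lies in no annulus. If $\mu(\{y\})>0$, then $M^c(\chi_Q\mu)(y)=\infty$, so this falls under your trivial case.

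(d) Suppose $\mu(Q)=\infty$. Then $M^c(\chi_Q\mu)\equiv\infty$ on $Q$, since the centered cube of side $2\ell(Q)$ about any point of $Q$ contains $Q$. So the right-hand side is infinite unless $\nabla f=0$ a.e. in $Q$, in which case $f$ is constant on $Q$ and there is nothing to prove. You may therefore assume $\mu(Q)<\infty$. This is also what justifies Tonelli and guarantees $\mu(E_t)<\infty$ before you divide.
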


The previous result was first established in \cite{FPW} in a different and more general setting. It was later considered in the Euclidean setting and extended in \cite{PR}, while a different proof, avoiding Minkowski's inequality, was given in \cite{PR2}. 

\begin{proof}[Proof of Theorem \ref{Thm PS 4}]
By the definition of the Lorentz quasi-norm and the pointwise equivalence of the Hardy-Littlewood maximal operator $M$ and the centered Hardy-Littlewood maximal operator $M^c$, it suffices to prove that there exists $c_{n,p}>0$ such that 
\begin{equation}\label{Thm PS 4 normalized Eq}
    \left\| f-f_Q\right\|_{\strt{2.3ex} L^{p^*,\infty}\left(Q,\frac{w(x)dx}{w(Q)}\right)} \le c_{n,p} w(Q)^\frac{1}{n} \left\| \left| \nabla f \right| \frac{M(w\chi_Q)^\frac{1}{n'}}{w} \right\|_{\strt{2.3ex} L^{p,\infty}\left(Q,\frac{w(x)dx}{w(Q)}\right)}.
\end{equation}
For simplicity, set
\begin{equation*}
    A:=w(Q)^\frac{1}{n} \left\| \left| \nabla f \right| \frac{M(w\chi_Q)^\frac{1}{n'}}{w} \right\|_{\strt{2.3ex}L^{p,\infty}\left(Q,\frac{w(x)dx}{w(Q)}\right)}.
\end{equation*}
If $A=0$, the desired estimate is trivial. Hence, we may assume that $A>0$. Let $t_0= C_{n,p}A$, where $C_{n,p}>0$ is a sufficiently large constant to be fixed below. Since
\begin{align*}
    \left\| f-f_Q\right\|_{\strt{2.3ex}L^{p^*,\infty}\left(Q,\frac{w(x)dx}{w(Q)}\right)} = \max \bigg\{& \sup_{0<t\le t_0} t \left(\frac{w(\{x\in Q:|f(x)-f_Q|>t\})}{w(Q)}\right)^\frac{1}{p^*}, \\
    & \sup_{t_0\le t} t \left(\frac{w(\{x\in Q:|f(x)-f_Q|>t\})}{w(Q)}\right)^\frac{1}{p^*} \bigg\},
\end{align*}
we consider two cases.

\textbf{Case 1:}
\begin{equation*}
    \sup_{0<t\le t_0} t \left(\frac{w(\{x\in Q:|f(x)-f_Q|>t\})}{w(Q)}\right)^\frac{1}{p^*} \ge \sup_{t_0\le t} t \left(\frac{w(\{x\in Q:|f(x)-f_Q|>t\})}{w(Q)}\right)^\frac{1}{p^*}.
\end{equation*}
In this case, we have
\begin{equation*}
    \left\| f-f_Q\right\|_{\strt{2.3ex} L^{p^*,\infty}\left(Q,\frac{w(x)dx}{w(Q)}\right)} \le t_0=C_{n,p}A,
\end{equation*}
and the desired estimate follows.

\textbf{Case 2:}
\begin{equation*}
    \sup_{0<t\le t_0} t \left(\frac{w(\{x\in Q:|f(x)-f_Q|>t\})}{w(Q)}\right)^\frac{1}{p^*} \le \sup_{t_0\le t} t \left(\frac{w(\{x\in Q:|f(x)-f_Q|>t\})}{w(Q)}\right)^\frac{1}{p^*}.
\end{equation*}
If the last supremum is zero, then the desired estimate is immediate. Otherwise, by the definition of the supremum, there exists $t_1\ge t_0$ such that
\begin{align*}
    \frac{t_1}{2} \left( \frac{w\left(\left\{x\in Q:|f(x)-f_Q|>\frac{t_1}{2}\right\}\right)}{w(Q)} \right)^\frac{1}{p^*} 
    &\le \left\| f-f_Q\right\|_{\strt{2.3ex} L^{p^*,\infty}\left(Q,\frac{w(x)dx}{w(Q)}\right)} \\
    &= \sup_{t_0\le t} t \left(\frac{w(\{x\in Q:|f(x)-f_Q|>t\})}{w(Q)}\right)^\frac{1}{p^*} \\
    &\le 2t_1 \left( \frac{w\left(\left\{x\in Q:|f(x)-f_Q|>t_1\right\}\right)}{w(Q)} \right)^\frac{1}{p^*}.
\end{align*}
This implies that 
\begin{equation}\label{level comparison fQ}
    w\left(\left\{x\in Q:|f(x)-f_Q|>\frac{t_1}{2}\right\}\right) \le c_{n,p} w\left(\left\{x\in Q:|f(x)-f_Q|>t_1\right\}\right).
\end{equation}
Consider the set defined by 
\begin{equation*}
    E_{t_1}:=\{x\in Q:|f(x)-f_Q|>t_1\}.
\end{equation*}
We proceed by considering the truncation operator
$\tau_\lambda$ defined as follows:
\begin{equation*}
    \tau_\lambda(g)(x):=\begin{cases}0, & \text{if } g(x)\le \lambda,\\ g(x)-\lambda, & \text{if } \lambda<g(x)\le 2\lambda,\\ \lambda, & \text{if } g(x)>2\lambda.\end{cases}
\end{equation*}
Note that, for each $x\in E_{t_1}$, we have
\begin{equation*}
    \frac{t_1}{2}=\tau_{\frac{t_1}{2}}(|f-f_Q|)(x) \le \left| \tau_{\frac{t_1}{2}}(|f-f_Q|)(x)-\left(\tau_{\frac{t_1}{2}}(|f-f_Q|)\right)_Q \right|+\left(\tau_{\frac{t_1}{2}}(|f-f_Q|)\right)_Q.
\end{equation*}
Using the fact that $\tau_\lambda(g)(x)\le g(x)$ and \eqref{I1 P11}, we obtain
\begin{align*}
    \left(\tau_{\frac{t_1}{2}}(|f-f_Q|)\right)_Q
    &\le \frac{1}{|Q|}\int_Q |f(x)-f_Q|dx \\
    &\le c_n \ell(Q) \frac{1}{|Q|} \int_Q |\nabla f(x)|dx \\
    &= c_n \frac{1}{|Q|^\frac{1}{n'}} \int_Q |\nabla f(x)|dx.
\end{align*}
We now insert the weight in the previous estimate. Since
\begin{equation*}
    \frac{w(Q)}{|Q|} \le \inf_{x\in Q} M(w\chi_Q)(x),
\end{equation*}
we have
\begin{align*}
    \left(\tau_{\frac{t_1}{2}}(|f-f_Q|)\right)_Q
    &\le c_n \frac{1}{w(Q)^\frac{1}{n'}} \inf_{x\in Q} M(w\chi_Q)(x)^\frac{1}{n'} \int_Q |\nabla f(x)|dx \\
    &\le c_n \frac{1}{w(Q)^\frac{1}{n'}} \int_Q |\nabla f(x)|M(w\chi_Q)(x)^\frac{1}{n'}dx \\
    &= c_n w(Q)^\frac{1}{n} \left\| \left| \nabla f \right| \frac{M(w\chi_Q)^\frac{1}{n'}}{w} \right\|_{\strt{2.3ex} L^1\left(Q,\frac{w(x)dx}{w(Q)}\right)} \\
    &\le c_{n,p} w(Q)^\frac{1}{n} \left\| \left| \nabla f \right| \frac{M(w\chi_Q)^\frac{1}{n'}}{w} \right\|_{\strt{2.3ex} L^{p,\infty}\left(Q,\frac{w(x)dx}{w(Q)}\right)} \\
    &= c_{n,p}A.
\end{align*}
In the last inequality, we used Kolmogorov's inequality again. Choosing the constant $C_{n,p} = 4c_{n,p}$ in the definition of $t_0$, and recalling that $t_1\ge t_0$, we obtain
\begin{equation*}
    \left(\tau_{\frac{t_1}{2}}(|f-f_Q|)\right)_Q \le \frac{t_1}{4}.
\end{equation*}
As a result,
\begin{equation*}
    E_{t_1} \subset \left\{x\in Q:\left| \tau_{\frac{t_1}{2}}(|f-f_Q|)(x)-\left(\tau_{\frac{t_1}{2}}(|f-f_Q|)\right)_Q \right| \ge \frac{t_1}{4}\right\}.
\end{equation*}

Applying Theorem \ref{Thm FPW} to $\tau_{\frac{t_1}{2}}(|f-f_Q|)$ with the measure $w(x)dx$, we get
\begin{align*}
    \frac{t_1}{4}w(E_{t_1})^\frac{1}{n'}
    &\le \frac{t_1}{4} w\left( \left\{ x\in Q: \left| \tau_{\frac{t_1}{2}}(|f-f_Q|)(x) - \left(\tau_{\frac{t_1}{2}}(|f-f_Q|)\right)_Q \right| \ge \frac{t_1}{4} \right\} \right)^\frac{1}{n'} \\
    &\le \left\| \tau_{\frac{t_1}{2}}(|f-f_Q|) - \left(\tau_{\frac{t_1}{2}}(|f-f_Q|)\right)_Q \right\|_{\strt{2.3ex} L^{n',\infty}(Q, w)} \\
    &\le c_n \int_Q \left|\nabla \big( \tau_{\frac{t_1}{2}}(|f-f_Q|)\big)(x)\right| M^c(w\chi_Q)(x)^\frac{1}{n'}dx \\
    &\le c_n \int_{\left\{x\in Q:\frac{t_1}{2}<|f(x)-f_Q|<t_1\right\}} |\nabla f(x)| \frac{M(w\chi_Q)(x)^\frac{1}{n'}}{w(x)} w(x)dx.
\end{align*}
Using the Lorentz-space version of Hölder's inequality \eqref{Holder Lorentz}, it follows that, 
\begin{equation*}
    \frac{t_1}{4}w(E_{t_1})^\frac{1}{n'} \le c_{n,p} \left\| \chi_{\left\{x\in Q:\frac{t_1}{2}<|f(x)-f_Q|<t_1\right\}} \right\|_{\strt{2.3ex}L^{p',1}(Q,w)} \left\| \left| \nabla f \right| \frac{M(w\chi_Q)^\frac{1}{n'}}{w} \right\|_{\strt{2.3ex}L^{p,\infty}(Q,w)}.
\end{equation*}
Moreover, by \eqref{level comparison fQ}, 
\begin{align*}
    \left\| \chi_{\left\{x\in Q:\frac{t_1}{2}<|f(x)-f_Q|<t_1\right\}} \right\|_{\strt{2.3ex} L^{p',1}(Q,w)}
    &= c_p w\left(\left\{x\in Q:\frac{t_1}{2}<|f(x)-f_Q|<t_1\right\}\right)^\frac{1}{p'} \\
    &\le c_{n,p} w\left(E_{t_1}\right)^\frac{1}{p'}.
\end{align*}
Therefore,
\begin{equation*}
    t_1 w(E_{t_1})^\frac{1}{n'} \le c_{n,p} w(E_{t_1})^\frac{1}{p'} \left\| \left| \nabla f \right| \frac{M(w\chi_Q)^\frac{1}{n'}}{w} \right\|_{\strt{2.3ex} L^{p,\infty}(Q,w)}.
\end{equation*}
Since $\frac{1}{n'}-\frac{1}{p'}=\frac{1}{p^*}$, we obtain
\begin{equation}\label{key level estimate fQ}
    t_1 w(E_{t_1})^\frac{1}{p^*} \le c_{n,p} \left\| \left| \nabla f \right| \frac{M(w\chi_Q)^\frac{1}{n'}}{w} \right\|_{\strt{2.3ex} L^{p,\infty}(Q,w)}.
\end{equation}
Combining everything together, we have
\begin{align*}
    \left\| f-f_Q\right\|_{L^{p^*,\infty}\left(Q,\frac{w(x)dx}{w(Q)}\right)}
    &\le 2t_1 \left(\frac{w(E_{t_1})}{w(Q)}\right)^\frac{1}{p^*} \\
    &\le c_{n,p} \left(\frac{1}{w(Q)}\right)^\frac{1}{p^*} \left\| \left| \nabla f \right| \frac{M(w\chi_Q)^\frac{1}{n'}}{w} \right\|_{\strt{2.3ex} L^{p,\infty}(Q,w)} \\
    &= c_{n,p} w(Q)^\frac{1}{n} \left\| \left| \nabla f \right| \frac{M(w\chi_Q)^\frac{1}{n'}}{w} \right\|_{\strt{2.3ex} L^{p,\infty}\left(Q,\frac{w(x)dx}{w(Q)}\right)} \\
    &= c_{n,p}A.
\end{align*}
This proves \eqref{Thm PS 4 normalized Eq}, and concludes the proof. 
\end{proof}

As announced after Theorem \ref{Thm PS 4}, we briefly indicate how Theorem \ref{Thm PS 4} implies a refinement of \cite[Theorem 1.21]{PR} in the case $p>1$. Using \eqref{Thm PS 4 Eq} and \eqref{eq:lorentz-nestedness}, we obtain
\begin{align*}
    \left\| f-f_Q\right\|_{\strt{2.3ex} L^{p^*,\infty}(Q,w)}
    &\le c_{n,p}\left\| \left| \nabla f \right| \frac{M^c(w\chi_Q)^\frac{1}{n'}}{w}\right\|_{\strt{2.3ex} L^{p,\infty}(Q,w)} \\
    &\le c_{n,p}\left(\int_Q |\nabla f(x)|^p \frac{M^c(w\chi_Q)(x)^\frac{p}{n'}}{w(x)^{p-1}}dx\right)^\frac{1}{p}.
\end{align*}
Moreover, following the argument used in the proof of Theorem \ref{Thm PS 4}, starting from the classical $L^1$ Poincaré inequality on cubes and inserting the weight as in that proof, we also get
\begin{equation*}
    \frac{1}{|Q|}\int_Q |f(x)-f_Q|\,dx
    \le c_{n,p} w(Q)^{-\frac{1}{p^*}}\left(\int_Q |\nabla f(x)|^p \frac{M^c(w\chi_Q)(x)^\frac{p}{n'}}{w(x)^{p-1}}dx\right)^\frac{1}{p}.
\end{equation*}
We now apply Theorem \ref{thm:truncation-method} with $r=p\le q=p^*$,
\begin{equation*}
    d\mu(x)=\frac{w(x)\,dx}{w(Q)}
\end{equation*}
and
\begin{equation*}
    d\nu(x)=w(Q)^{-\frac{p}{p^*}}\frac{M^c(w\chi_Q)(x)^\frac{p}{n'}}{w(x)^{p-1}}\,dx,
\end{equation*}
and we obtain
\begin{equation*}
    \left\| f-f_{Q}\right\|_{\strt{2.3ex} L^{p^*,p}(Q,\mu)}
    \le c_{n,p}\left(\int_Q |\nabla f(x)|^p\,d\nu(x)\right)^\frac{1}{p}.
\end{equation*}
Multiplying by $w(Q)^\frac{1}{p^*}$, we conclude that
\begin{align*}
    \left(\int_Q |f(x)-f_{Q}|^{p^*}w(x)\,dx\right)^\frac{1}{p^*}
    &\le \left\| f-f_{Q}\right\|_{\strt{2.3ex} L^{p^*,p}(Q,w)} \\
    &\le c_{n,p}\left(\int_Q |\nabla f(x)|^p \frac{M^c(w\chi_Q)(x)^\frac{p}{n'}}{w(x)^{p-1}}dx\right)^\frac{1}{p},
\end{align*}
which is a refinement of \cite[Theorem~1.21]{PR}, since the unweighted average $f_Q$ appears on the left-hand side.

\begin{proof}[Proof of Corollary \ref{Thm PS 5}]
Dividing by $w(Q)^\frac{1}{p^*}$ in \eqref{Thm PS 4 Eq} and using the definition of the Sobolev exponent $p^*$, we obtain
\begin{align*}
    \left\| f-f_Q\right\|_{\strt{2.3ex} L^{p^*,\infty}\left(Q,\frac{w(x)dx}{w(Q)}\right)}
    &\le c_{n,p} w(Q)^\frac{1}{n}\left\| \left| \nabla f \right| \frac{M^c(w\chi_Q)^\frac{1}{n'}}{w}\right\|_{\strt{2.3ex} L^{p,\infty}\left(Q,\frac{w(x)dx}{w(Q)}\right)} \\
    &= c_{n,p} \ell(Q)\left(\frac{w(Q)}{|Q|}\right)^\frac{1}{n}\left\| \left| \nabla f \right| \frac{M^c(w\chi_Q)^\frac{1}{n'}}{w}\right\|_{\strt{2.3ex} L^{p,\infty}\left(Q,\frac{w(x)dx}{w(Q)}\right)} \\
    &\le c_{n,p} \ell(Q)\inf_{x\in Q}\left(M(w\chi_Q)(x)\right)^\frac{1}{n}\left\| \left| \nabla f \right| \frac{M^c(w\chi_Q)^\frac{1}{n'}}{w}\right\|_{\strt{2.3ex} L^{p,\infty}\left(Q,\frac{w(x)dx}{w(Q)}\right)} \\
    &\le c_{n,p} \ell(Q)\left\| \left| \nabla f \right| \frac{M^c(w\chi_Q)}{w}\right\|_{\strt{2.3ex} L^{p,\infty}\left(Q,\frac{w(x)dx}{w(Q)}\right)} \\
    &\le c_{n,p} [w]_{A_1} \ell(Q)\left\| \nabla f \right\|_{\strt{2.3ex} L^{p,\infty}\left(Q,\frac{w(x)dx}{w(Q)}\right)},
\end{align*}
where in the third inequality we used the fact that $M$ and $M^c$ are pointwise equivalent, and in the last inequality we used that $w\in A_1$.
\end{proof}

\appendix
\section{Generalized Poincaré inequalities with polynomials}\label{S Polynomials}

Fix a cube $Q$ and an integer $m\in \N\cup \{0\}$. We consider the space $\mathcal{P}_m(Q)$ of polynomials of degree at most $m$ in $n$ variables restricted to the cube $Q$. We denote by $P_Q^m f$ the projection of a function $f$ onto the space $\mathcal{P}_m(Q)$.	We have the following property: there exists a constant $c_{n,m}>0$ such that 
	\begin{equation}\label{ProyPolinomio1}
		\left\| P_Q^m f \right\|_{\strt{2.3ex} L^\infty(Q)} \le c_{n,m} \frac{1}{|Q|} \int_Q |f(x)|  dx .
	\end{equation}
We refer to \cite[Section 8]{PR} for a detailed explanation of this setting. The polynomial framework here is an extension of the average we discussed in previous sections, as $P_Q^0 f = f_Q$.

We state the main result of this section.

\begin{theorem}\label{Thm Pol 1}
	Let $1 \le p < \infty$, $1 \le r < \infty$, $1\le s<\infty$, and let $a$ be a functional over cubes. Let $w\in A_r$ and let $f:\R^n \longrightarrow \R$ be a locally integrable function satisfying
	\begin{equation*}
		\frac{1}{|Q|}\int_Q |f(x)-P_Q^mf(x)|\,dx \le a(Q)
	\end{equation*}
	for every cube $Q$.
	
	\begin{itemize}
		\item Assume that $a\in SD_p^s(w)$ and $p<rs$. Then there exists a constant $c_{n,m}>0$ such that, for every cube $Q$,
		\begin{equation}\label{eq:mainpoly}
			\norm{f-P_Q^mf}_{\strt{2.3ex} L^{p_{r,s}^{*},\infty}\left(Q, \frac{w(x)\,dx}{w(Q)}\right)}
			\le
			c_{n,m} p_{r,s}^{*}\,[w]_{A_\infty}\,[w]_{A_r}^{\frac{1}{rs}}\,
			\|a\|\,a(Q),
		\end{equation}
		where $p_{r,s}^{*}$ is defined by the relation
		\begin{equation*}
			\frac{1}{p}-\frac{1}{p_{r,s}^{*}}=\frac{1}{s \, r}.
		\end{equation*}
		
		\item Assume that $a\in SD_p^s(w)$ and $p\ge rs$. Then there exists a constant $c_{n,m,p}>0$ such that, for every cube $Q$,
		\begin{equation}\label{eq:exppoly}
			\|f-P_Q^mf\|_{\exp L\left(Q,\frac{w(x)\,dx}{w(Q)}\right)}
			\le
			c_{n,m,p}\,[w]_{A_\infty}\,[w]_{A_r}^{\frac1p}\,
			\|a\|\,a(Q).
		\end{equation}
	\end{itemize}
\end{theorem}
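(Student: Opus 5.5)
The plan is to repeat the proof of Theorem \ref{Selfimproving Ar} with $f_Q$ replaced by $P_Q^m f$, and to change only the steps that actually use averages. The exponential estimate \eqref{eq:exppoly} will follow from \eqref{eq:mainpoly} exactly as before. For $p\ge rs$ we have $\|a\|_{SD_p^{s_0}(w)}\le \|a\|_{SD_p^s(w)}$ for every $s_0\ge s$. For each $q>p$ we choose $s_0$ with $\frac1p-\frac1q=\frac1{rs_0}$, apply \eqref{eq:mainpoly} to get a weak-$L^q$ bound of size $c_{n,m}q$ times $[w]_{A_\infty}[w]_{A_r}^{1/p}\|a\|a(Q)$, and then invoke Proposition \ref{prop:linearweakLptoexpL} with $p_0=p$.

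For \eqref{eq:mainpoly}, fix $Q$ and set $g=f-P_Q^m f$. We study the level sets $\Omega_t=\{x\in Q: M_Q g(x)>t\}$ and take their maximal dyadic cubes $Q_j$, which satisfy $t<\frac1{|Q_j|}\int_{Q_j}|g|\le 2^nt$. Since $P^m$ reproduces polynomials, on $Q_j$ we have
\[
f-P_{Q_j}^m f = g - P_{Q_j}^m g .
\]
By \eqref{ProyPolinomio1}, $\|P_{Q_j}^m g\|_{L^\infty(Q_j)}\le c_{n,m}\frac{1}{|Q_j|}\int_{Q_j}|g|\le c_{n,m}2^n t$. This replaces the bound $|f_{Q_j}-f_Q|\le 2^nt$, and the constant $\kappa$ becomes $\kappa=c_{n,m}2^n+2$, so that $w(\Omega_{\kappa t})\le\sum_j w(E_{Q_j})$ with $E_{Q_j}=\{x\in Q_j: M_{Q_j}(f-P^m_{Q_j}f)>t\}$.

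Next we split each $E_{Q_j}$ using the polynomial sharp function
\[
M^{\#,m}_Q f(x)=\sup_{x\in R\in\mathcal D(Q)}\frac1{|R|}\int_R|f-P_R^m f| .
\]
The sets $B_j$ are handled verbatim: the maximal cubes $R_i$ of $\{M^{\#,m}_Q f>\gamma t\}$ satisfy $\gamma t<a(R_i)$ by hypothesis. Applying $SD_p^s(w)$ and then \eqref{Ap 2} gives
\[
\frac{w(\bigcup_i R_i)}{w(Q)}\le [w]_{A_r}^{p^*_{r,s}/(rs)}\,(\gamma t)^{-p^*_{r,s}}\,\|a\|^{p^*_{r,s}}\,a(Q)^{p^*_{r,s}} .
\]
The iteration with $\varphi(N)=\sup_{0<t\le N}t^{p^*_{r,s}}w(\Omega_t)$ and the choice of $\gamma$ then go through unchanged, with all constants now depending on $n,m$.

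The main obstacle is the good-lambda estimate for the sets $A_j=\{x\in Q_j: M_{Q_j}(f-P^m_{Q_j}f)>t,\ M^{\#,m}_Qf\le\gamma t\}$. Previously this was \cite[Corollary 1.4]{CP}, an exponential John--Nirenberg type inequality for the ordinary sharp function. What is needed here is
\[
w(A_j)\le c_1e^{-c_2/([w]_{A_\infty}\gamma)}w(Q_j),
\]
with $c_1,c_2$ depending on $n,m$. I would first try to prove its unweighted version by rerunning the CZ/John--Nirenberg iteration inside $Q_j$ with polynomials. At each stage the stopping cubes $R$ have $\frac1{|R|}\int_R |h|\sim\lambda$, where $h=f-P^m_{Q_j}f$ on $Q_j$. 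Subtracting $P_R^m$ costs at most $c_{n,m}$ times that average by \eqref{ProyPolinomio1}. The condition $M^{\#,m}f\le\gamma t$ at some point of the cube keeps $\frac1{|R|}\int_R|f-P_R^m f|\le\gamma t$. Each step therefore raises the level by a fixed multiple of $\gamma t$ while shrinking the measure by a factor $1/2$. This gives $|A_j|\le c\,e^{-c'/\gamma}|Q_j|$, and the weighted version follows from the exponential $A_\infty$ property with the Fujii--Wilson constant, exactly as in \cite{CP}. If that last transfer is delicate, the fallback is to note that the proof in \cite{CP} uses only \eqref{ProyPolinomio1}-type stability of the oscillation, so it applies verbatim to $P^m$, as in \cite[Section 8]{PR}.
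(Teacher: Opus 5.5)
Your proposal is essentially the paper's proof. It uses the same reduction via $f-P^m_{Q_j}f=g-P^m_{Q_j}g$ and \eqref{ProyPolinomio1} to get $\kappa=\kappa(n,m)$, the same split with $M^{\#,m}_Q$ and the unchanged $SD_p^s(w)$/\eqref{Ap 2} estimate for the $B_j$, and the same absorption and passage to $\exp L$ via Proposition~\ref{prop:linearweakLptoexpL}; for the $A_j$ the paper simply invokes the polynomial good-lambda inequality \cite[Theorem~7.1]{CP}, which is your fallback. If you instead prove that inequality yourself, note that as written (stopping the single function $h$ at increasing levels $\lambda$) each step costs $c_{n,m}\lambda$ rather than $O(\gamma t)$ and yields only decay polynomial in $\gamma$. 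You must re-centre at every stopping cube $R$ (Calder\'on--Zygmund decomposition of $|f-P^m_Rf|$ on $R$ at height $\sim\gamma t$, then telescoping $\|P^m_{R'}f-P^m_Rf\|_{L^\infty(R')}\lesssim_{n,m}\gamma t$), after which the transfer to $w$ via $w(E)/w(Q)\le 2(|E|/|Q|)^{1/(c_n[w]_{A_\infty})}$ is immediate.
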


\begin{proof}[Sketch of the proof]
We follow the proof of Theorem~\ref{Selfimproving Ar}, replacing averages by the projections $P_Q^m f$. Fix a cube $Q$ and set $\Omega_t:=\{x\in Q:\ M_Q(f-P_Q^m f)(x)>t\}$. Let $\{Q_j\}_j$ be the maximal dyadic subcubes forming $\Omega_t$. Then either $\Omega_t=Q$, or
\begin{equation*}
    t<\frac{1}{|Q_j|}\int_{Q_j}|f-P_Q^m f|\,dx\le 2^n t.
\end{equation*}
Using $P_{Q_j}^m f-P_Q^m f=P_{Q_j}^m(f-P_Q^m f)$ and \eqref{ProyPolinomio1}, one gets $\|P_{Q_j}^m f-P_Q^m f\|_{L^\infty(Q_j)}\le c_m 2^n  t$, and hence for a suitable $\kappa=\kappa(n,m)>1$,
\begin{equation*}
    w(\Omega_{\kappa t})\le \sum_j w(E_{Q_j}), \qquad E_{Q_j}:=\{x\in Q_j:\ M_{Q_j}(f-P_{Q_j}^m f)(x)>t\}.
\end{equation*}
Introduce the polynomial sharp maximal function
\begin{equation*}
    M_Q^{\#,m}f(x):=\sup_{x\in R\in\mathcal D(Q)}\frac{1}{|R|}\int_R |f-P_R^m f|.
\end{equation*}
For $\gamma>0$ split each $E_{Q_j}$ into the regions where $M_Q^{\#,m}f\le \gamma t$ and where $M_Q^{\#,m}f>\gamma t$. The polynomial good-lambda inequality (obtained from \cite[Theorem~7.1]{CP}) yields
\begin{equation*}
    w(\Omega_{\kappa t})\le c_1 e^{-\frac{c_2}{[w]_{A_\infty}\gamma}}\,w(\Omega_t) + w(\{x\in Q : M_Q^{\#,m}f(x)>\gamma t\}).
\end{equation*}
The second term is estimated exactly as in the proof of \eqref{Thm Ar eq} using the hypothesis $a\in SD_p^s(w)$. The exponential estimate \eqref{eq:exppoly} follows from \eqref{eq:mainpoly} via Proposition \ref{prop:linearweakLptoexpL}.
\end{proof}

As a consequence of this result, and in view of the lack of a truncation method for higher-order derivatives, we obtain the following analogue of Corollary \ref{thm:PS-Lorentz-weak}.

\begin{corollary}\label{cor:PS-Lorentz-weak-poly}
Let $1\le m <n$. Let $w\in A_r$ with $1\le r\le p$, $1< p< \frac{n}{m}r$ and consider $p_{m,r}^{*}$ defined by
\begin{equation*}
    \frac{1}{p}-\frac{1}{p_{m,r}^{*}}= \frac{m}{n} \frac{1}{r}.
\end{equation*}
Then, there exists a constant $c_{n,m}>0$ such that 
\begin{equation*}
	\|f-P_Q^{m-1}f\|_{\strt{2.3ex} L^{p_{m,r}^{*},\infty}\left(Q, \frac{w(x)\,dx}{w(Q)}\right)} \le c_{n,m} p^*_{m,r} [w]_{A_\infty}\,[w]_{A_{p,p_{m,r}^{*}}}^{\frac{1}{p}}\,[w]_{A_r}^{\frac{m}{rn}}\, \ell(Q)^m \,\|\nabla^m f\|_{L^{p,p_{m,r}^{*}}\left(Q,\frac{w(x)\,dx}{w(Q)}\right)},
\end{equation*}
for every cube $Q$. 
\end{corollary}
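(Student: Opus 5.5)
The plan mirrors the proof of Corollary \ref{thm:PS-Lorentz-weak}, with the average replaced by $P_Q^{m-1}f$ and the first-order Poincaré inequality replaced by its higher-order analogue. First, I will use the classical unweighted higher-order $(1,1)$ Poincaré inequality on cubes,
\begin{equation*}
\frac{1}{|Q|}\int_Q |f(x)-P_Q^{m-1}f(x)|\,dx \le c_{n,m}\,\ell(Q)^m\,\frac{1}{|Q|}\int_Q |\nabla^m f(x)|\,dx .
\end{equation*}
It follows by iterating \eqref{I1 P11}, or from the polynomial framework of \cite[Section 8]{PR}, together with the projection bound \eqref{ProyPolinomio1}. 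Since $1\le r\le p$ and $p_{m,r}^*>p>1$, we have $w\in A_r\subset A_p=A_{p,p_{m,r}^*}$. Applying \eqref{Apq} with $g=|\nabla^m f|$ then gives the starting inequality of Theorem \ref{Thm Pol 1}-type (degree $m-1$) with the functional
\begin{equation*}
a(Q)=c_{n,m}[w]_{A_{p,p_{m,r}^*}}^{\frac1p}\,\ell(Q)^m\,\|\nabla^m f\|_{L^{p,p_{m,r}^*}\left(Q,\frac{w(x)dx}{w(Q)}\right)} .
\end{equation*}

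Second, I will check that $a\in D_{p_{m,r}^*}(w)$ with $\|a\|\le [w]_{A_r}^{m/(nr)}$. This is the computation in Lemma \ref{Lemma Lpq SDp} with $M=1$, except that $\ell(Q)$ becomes $\ell(Q)^m$ and $\frac1n$ becomes $\frac mn$. One writes $\ell(Q_j)^{mp^*}w(Q_j)^{1-p^*/p}=(|Q_j|^r/w(Q_j))^{\frac{mp^*}{nr}}$, which works because $\frac{mp^*}{nr}=p^*(\frac1p-\frac1{p^*})$. Then \eqref{Ap 2} bounds $|Q_j|^r/w(Q_j)\le [w]_{A_r}|Q|^r/w(Q)$. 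Finally, Lemma \ref{Lemma Lorentz} with $\alpha=q=p_{m,r}^*\ge p$ sums the Lorentz norms over the disjoint cubes.

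Third, I need the polynomial analogue of Theorem \ref{Thm 1.6 CP}, namely the weak $L^{p,\infty}$ self-improvement under a $D_p(w)$ condition with $P_Q^{m-1}$ in place of $f_Q$. This is the step I expect to be the main obstacle, since Theorem \ref{Thm Pol 1} is stated only for $SD_p^s$. I will run the good-lambda argument from the sketch of Theorem \ref{Thm Pol 1}, including the same $\kappa$ decomposition and the polynomial good-lambda inequality from \cite[Theorem 7.1]{CP}, with one change. The term $w(\{M_Q^{\#,m-1}f>\gamma t\})=w(\bigcup_i R_i)$ is now bounded by $(\gamma t)^{-p}\sum_i a(R_i)^p w(R_i)\le (\gamma t)^{-p}\|a\|^p a(Q)^p w(Q)$, using $D_p(w)$ directly, with no smallness factor and exponent $p=p_{m,r}^*$. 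Absorbing via $\varphi(N)=\sup_{t\le N}t^{p}w(\Omega_t)$ and choosing $1/\gamma\simeq c_{n,m}[w]_{A_\infty}p$ gives the constant $c_{n,m}\,p\,[w]_{A_\infty}\|a\|$. Equivalently, I can appeal to the version in \cite{CP} for polynomials if it is available there.

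Combining the three steps yields the bound $c_{n,m}p_{m,r}^*[w]_{A_\infty}[w]_{A_r}^{m/(rn)}a(Q)$, which is the claimed estimate.
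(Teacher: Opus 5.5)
Your proposal is correct and follows the paper's route: the paper likewise starts from the unweighted higher-order $(1,1)$ Poincar\'e inequality (it quotes this from the literature, whereas you derive it by iteration plus \eqref{ProyPolinomio1}), inserts the weight via \eqref{Apq}, checks the $D_{p_{m,r}^*}(w)$ condition as in Lemma \ref{Lemma Lpq SDp} with $M=1$ and $\ell(Q)^m$ in place of $\ell(Q)$, and then applies the $D_p$-type self-improvement. Your Step 3 reruns the good-lambda argument with the polynomial sharp maximal function and the plain $D_p(w)$ bound; this makes explicit the polynomial analogue of Theorem \ref{Thm 1.6 CP}, which the paper leaves implicit under ``the rest of the proof follows from the preceding arguments''.
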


The proof of this result is essentially the same as that of Corollary \ref{thm:PS-Lorentz-weak}, but using the following starting inequality from \cite{B}: there exists a constant $C>1$ such that
\begin{equation*}
	\frac{1}{|Q|}\int_Q |f(x)-P_Q^{m-1} f(x)| dx\le C \ell(Q)^{m} \frac{1}{|Q|} \int_Q |\nabla^{m} f (x)|dx ,
\end{equation*}
	for every cube $Q$. The rest of the proof follows from the preceding arguments; we omit it here.

\section{Self-improving of vector-valued Poincaré inequalities}\label{S vector}

In this appendix, we extend the main self-improving result of the paper to the vector-valued setting, where the oscillation is measured through the $\ell_q$-norm.

\begin{theorem}{\label{Appendix B}}
	Let $q\ge 1$, $1 \le p < \infty$, $1 \le r < \infty$, $1\le s<\infty$, and let $a$ be a functional over cubes. Let $w\in A_r$ and let $f:\R^n \longrightarrow \ell_q$ be a locally integrable function satisfying
	\begin{equation*}
		\frac{1}{|Q|}\int_Q \norm{f(x)-f_Q}_{\ell_q}\,dx \le a(Q)
	\end{equation*}
	for every cube $Q$.
	
	\begin{itemize}
		\item Assume that $a\in SD_p^s(w)$ and $p<rs$. Then there exists a dimensional constant $c_n>0$ such that, for every cube $Q$,
		\begin{equation*}
			\norm{\norm{f-f_Q}_{\ell_q}}_{\strt{2.3ex} L^{p_{r,s}^{*},\infty}\left(Q, \frac{w(x)\,dx}{w(Q)}\right)}
			\le
			c_n p_{r,s}^{*}\,[w]_{A_\infty}\,[w]_{A_r}^{\frac{1}{rs}}\,
			\|a\|\,a(Q),
		\end{equation*}
		where $p_{r,s}^{*}$ is defined by the relation
		\begin{equation*}
			\frac{1}{p}-\frac{1}{p_{r,s}^{*}}=\frac{1}{s\,r}.
		\end{equation*}
		
		\item Assume that $a\in SD_p^s(w)$ and $p\ge rs$. Then there exists a constant $c_{n,p}>0$ such that, for every cube $Q$,
		\begin{equation*}
			\norm{\norm{f-f_Q}_{\ell_q}}_{\exp L\left(Q,\frac{w(x)\,dx}{w(Q)}\right)}
			\le
			c_{n,p}\,[w]_{A_\infty}\,[w]_{A_r}^{\frac1p}\,
			\|a\|\,a(Q).
		\end{equation*}
	\end{itemize}
\end{theorem}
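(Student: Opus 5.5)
The plan is to repeat the proof of Theorem \ref{Selfimproving Ar} essentially verbatim, with the scalar function $|f-f_Q|$ replaced by the scalar function $F_Q(x):=\norm{f(x)-f_Q}_{\ell_q}$. The key observation is that every step of that proof uses only three ingredients. The first is the local dyadic Calder\'on--Zygmund decomposition of a nonnegative integrable function on $Q$. The second is the triangle inequality for the oscillation. The third is the good-lambda inequality \cite[Corollary 1.4]{CP} relating $M_{Q_j}$ to a sharp maximal function. The first two are available for any Banach-valued $f$, since $\ell_q$ is a normed space for $q\ge1$.

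\textbf{Decomposition and comparison of averages.} Fix $Q$ and set $\Omega_t=\{x\in Q: M_Q(F_Q)(x)>t\}$, with maximal dyadic cubes $Q_j$ satisfying $t<\frac{1}{|Q_j|}\int_{Q_j}F_Q\le 2^n t$. Since $\norm{f_{Q_j}-f_Q}_{\ell_q}\le \frac{1}{|Q_j|}\int_{Q_j}\norm{f-f_Q}_{\ell_q}\le 2^nt$ (Minkowski's integral inequality in $\ell_q$), we get $|F_{Q_j}-F_Q|\le 2^nt$ on $Q_j$. Hence, with $\kappa=2^n+1$, we have $w(\Omega_{\kappa t})\le\sum_j w(E_{Q_j})$, where $E_{Q_j}=\{x\in Q_j: M_{Q_j}(F_{Q_j})>t\}$, exactly as before.

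\textbf{Good-lambda step.} Define the vector sharp function $M^{\#}_{Q,\ell_q}f(x)=\sup_{x\in R\in\mathcal D(Q)}\frac{1}{|R|}\int_R\norm{f-f_R}_{\ell_q}$, and split $E_{Q_j}$ into $A_j\cup B_j$ according to whether $M^{\#}_{Q,\ell_q}f\le\gamma t$ or $M^{\#}_{Q,\ell_q}f>\gamma t$. For the $A_j$ part, I need a vector-valued version of the exponential good-lambda estimate
\[
w(A_j)\le c_1e^{-c_2/([w]_{A_\infty}\gamma)}w(Q_j).
\]
This is the main obstacle. I would first try to reduce it to the scalar result via $g=\norm{f-f_{Q_j}}_{\ell_q}$. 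The issue is that $\frac1{|R|}\int_R|g-g_R|$ is not directly comparable to $\frac1{|R|}\int_R\norm{f-f_R}_{\ell_q}$. Still, $|g(x)-\norm{f_R-f_{Q_j}}_{\ell_q}|\le\norm{f(x)-f_R}_{\ell_q}$, so
\[
\frac1{|R|}\int_R|g-g_R|\le 2\frac1{|R|}\int_R\norm{f-f_R}_{\ell_q}.
\]
Thus $M^\#_{Q_j}g\le 2M^{\#}_{Q,\ell_q}f$ on $Q_j$, while $M_{Q_j}g=M_{Q_j}(F_{Q_j})$. The scalar inequality \cite[Corollary 1.4]{CP} applied to $g$ on $Q_j$ then gives the bound with $\gamma$ replaced by $2\gamma$, which only changes $c_2$. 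If the scalar corollary is phrased for $f$ itself rather than a nonnegative $g$ with $g_{Q_j}$ subtracted, I would instead rerun its John--Nirenberg-type proof with $\ell_q$-norms, which goes through line by line.

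\textbf{$B_j$ part and conclusion.} We have $\bigcup_jB_j\subset\bigcup_iR_i$, where the $R_i$ are the maximal dyadic cubes with $\frac1{|R_i|}\int_{R_i}\norm{f-f_{R_i}}_{\ell_q}>\gamma t$. The hypothesis gives $a(R_i)>\gamma t$. The $SD_p^s(w)$ condition combined with \eqref{Ap 2} for $A_r$ then yields, as in the scalar case,
\[
w\Big(\bigcup_iR_i\Big)\le [w]_{A_r}^{p^*_{r,s}/(rs)}\big(\|a\|a(Q)/(\gamma t)\big)^{p^*_{r,s}}w(Q).
\]
The function $\varphi(N)=\sup_{0<t\le N}t^{p^*_{r,s}}w(\Omega_t)$ absorbs the first term once $\gamma$ is chosen with $1/\gamma\simeq c_n[w]_{A_\infty}p^*_{r,s}$, which proves the weak-type estimate. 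The $\exp L$ statement for $p\ge rs$ follows as before: use $\|a\|_{SD_p^{s_0}(w)}\le\|a\|_{SD_p^s(w)}$ for $s_0\ge s$, apply the weak estimate for all exponents $>p$, and invoke Proposition \ref{prop:linearweakLptoexpL}.
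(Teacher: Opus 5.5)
Your proposal is correct. Its architecture matches the paper's sketch:
\begin{itemize}
\item run the scalar proof on $x\mapsto\|f(x)-f_Q\|_{\ell_q}$;
\item use Minkowski to get $\|f_{Q_j}-f_Q\|_{\ell_q}\le 2^n t$, hence $w(\Omega_{\kappa t})\le\sum_j w(E_{Q_j})$;
\item treat the $B_j$ part with $SD_p^s(w)$ and \eqref{Ap 2};
\item obtain the $\exp L$ endpoint from Proposition \ref{prop:linearweakLptoexpL}.
\end{itemize}

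The one genuine difference is the good-lambda step. The paper does not prove it. It only remarks that a vector-valued version of \cite[Theorem 1.1]{CP} can be established and would supply the needed inequality. You instead reduce to the scalar inequality \cite[Corollary 1.4]{CP}, applied on $Q_j$ to the real function $g=\|f-f_{Q_j}\|_{\ell_q}$, using $M^{\#}_{Q_j}g\le 2M^{\#}_{Q,\ell_q}f$ on $Q_j$. This is more economical. It shows the vector-valued good-lambda estimate follows formally from the scalar one, so no vector-valued John--Nirenberg theorem is needed.

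One detail should be closed directly rather than deferred to your fallback. The scalar corollary applied to $g$ controls the set where $M_{Q_j}(g-g_{Q_j})$ is large, not the set where $M_{Q_j}g>t$. The fix is short:
\begin{itemize}
\item Since $Q_j\in\mathcal D(Q)$, for $x\in A_j$ you have $g_{Q_j}=\frac{1}{|Q_j|}\int_{Q_j}\|f-f_{Q_j}\|_{\ell_q}\le M^{\#}_{Q,\ell_q}f(x)\le\gamma t$.
\item Hence $M_{Q_j}(g-g_{Q_j})(x)\ge M_{Q_j}g(x)-g_{Q_j}>(1-\gamma)t\ge t/2$ whenever $\gamma\le\tfrac12$.
\item Also $M^{\#}_{Q_j}g(x)\le 2\gamma t=4\gamma\cdot\tfrac{t}{2}$, so the scalar estimate at height $t/2$ with parameter $4\gamma$ gives $w(A_j)\le c_1e^{-c_2/(4\gamma[w]_{A_\infty})}w(Q_j)$.
\end{itemize}
The restriction $\gamma\le\tfrac12$ is harmless. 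Shrinking $\gamma$ only decreases the exponential factor, and the bound $1/\gamma\le c_n[w]_{A_\infty}p^{*}_{r,s}$ survives because $[w]_{A_\infty}\ge1$ and $p^*_{r,s}\ge1$. So rerunning the John--Nirenberg argument with $\ell_q$-norms is unnecessary.
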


\begin{proof}[Sketch of the proof]
This is a direct vector-valued adaptation of the scalar proof: one applies the whole argument to the scalar function $x\mapsto \|f(x)-f_Q\|_{\ell_q}$. The Calderón--Zygmund decomposition and the key estimate $w(\Omega_{\kappa t})\le \sum_j w(E_{Q_j})$ follow from the triangle inequality in $\ell_q$ and Minkowski's inequality. The good-lambda step applies to the corresponding sharp maximal function built from $\|f-f_R\|_{\ell_q}$, and the remaining estimate uses $a\in SD_p^s(w)$ exactly as in the scalar case. The exponential endpoint follows from the weak bounds via Proposition~\ref{prop:linearweakLptoexpL}.
\end{proof}

We also remark that one can prove a vector-valued version of \cite[Theorem 1.1]{CP}. Such a vector-valued extension would yield the corresponding good-lambda inequality needed in the previous argument.

In order to derive concrete consequences of this vector-valued self-improving result, we shall use the following lemma from \cite{MP}.

\begin{lemma}
	Let $q\ge 1$. There exists a dimensional constant $c_n>0$ such that, for any cube $Q$ of $\R^n$ and for any vector-valued function $f: \R^n\longrightarrow \ell_q$ with components in $C^1(Q)$, the representation formula 
	\begin{equation*}
		\left\| f(x)-f_Q \right\| _{\ell_q} \le c_n I_1(\left\| \nabla f\right\|_{\ell_q} \chi_Q) (x) 
	\end{equation*}
	holds for every $x\in Q$, where $I_1$ is the Riesz potential defined by $I_1g(z):=\int_{\mathbb{R}^n}\frac{g(y)}{|z-y|^{n-1}}dy$. 
\end{lemma}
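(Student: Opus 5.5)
This is the vector-valued analogue of the classical pointwise representation $|f(x)-f_Q|\le c_n I_1(|\nabla f|\chi_Q)(x)$. I would prove it by running the scalar argument for vector-valued functions, using Minkowski's inequality in $\ell_q$ at the points where the scalar proof would use the triangle inequality for absolute values.

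\textbf{Step 1: the one-dimensional identity.} Fix $x\in Q$. For $y\in Q$, the segment from $x$ to $y$ lies in $Q$ because $Q$ is convex. Since each component $f_k$ is $C^1(Q)$, the fundamental theorem of calculus gives, componentwise,
\begin{equation*}
f(x)-f(y) = -\int_0^{|x-y|} \frac{d}{d\rho} f(x+\rho\theta)\,d\rho,\qquad \theta=\frac{y-x}{|x-y|}.
\end{equation*}
Averaging over $y\in Q$ yields $f(x)-f_Q=\frac{1}{|Q|}\int_Q (f(x)-f(y))\,dy$ as an $\ell_q$-valued (Bochner) integral.

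\textbf{Step 2: take the $\ell_q$-norm.} Applying Minkowski's integral inequality in $\ell_q$ (here $q\ge1$ is needed) to both integrals gives
\begin{equation*}
\|f(x)-f_Q\|_{\ell_q}\le \frac{1}{|Q|}\int_Q\int_0^{|x-y|}\big\|\nabla f(x+\rho\theta)\cdot\theta\big\|_{\ell_q}\,d\rho\,dy .
\end{equation*}
Pointwise, $\|\nabla f\cdot\theta\|_{\ell_q}\le \|\nabla f\|_{\ell_q}$, where $\|\nabla f\|_{\ell_q}=\|(|\nabla f_k|)_k\|_{\ell_q}$. This follows from Cauchy--Schwarz in each component, $|\nabla f_k\cdot\theta|\le|\nabla f_k|$. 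The problem now reduces to bounding the scalar function $g:=\|\nabla f\|_{\ell_q}\chi_Q$.

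\textbf{Step 3: polar coordinates.} Write $y=x+\sigma\theta$, extend $g$ by zero outside $Q$, and let $d=\operatorname{diam}Q=\sqrt n\,\ell(Q)$. Then the right-hand side is at most
\begin{equation*}
\frac{1}{|Q|}\int_{S^{n-1}}\int_0^{d}\sigma^{n-1}\int_0^{\sigma} g(x+\rho\theta)\,d\rho\,d\sigma\,d\theta \le \frac{d^n}{n|Q|}\int_{S^{n-1}}\int_0^{\infty} g(x+\rho\theta)\,d\rho\,d\theta .
\end{equation*}
The last expression equals $\frac{d^n}{n|Q|}\int \frac{g(z)}{|x-z|^{n-1}}\,dz$, and $d^n/|Q|=n^{n/2}$ is a dimensional constant. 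This yields the claim with $c_n=n^{n/2-1}$.

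The only point requiring care is Step 2, namely justifying that Minkowski's inequality and the vector calculus can be carried out for $\ell_q$-valued functions. I would handle this by proving the estimate first for each truncation $(f_1,\dots,f_N,0,\dots)$, where everything is finite-dimensional. The bound obtained is uniform in $N$, because $\|\cdot\|_{\ell_q}$ of the truncation increases to the full norm. Monotone convergence then passes to the limit.
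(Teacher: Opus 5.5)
Your proof is correct. The paper gives no proof of this lemma and simply imports it from \cite{MP}. Your argument is the standard proof of the scalar representation formula (Gilbarg--Trudinger, Lemma 7.16, which gives the same constant $\frac{d^n}{n|Q|}=n^{\frac n2-1}$), with Minkowski's integral inequality in $\ell_q$ replacing the triangle inequality; this is the only place where $q\ge 1$ is needed. Reducing to finitely many components cleanly avoids vector-valued calculus: the truncated left-hand sides increase to $\|f(x)-f_Q\|_{\ell_q}$ (with $f_Q$ the componentwise average), and the truncated gradients are pointwise dominated by $\|\nabla f\|_{\ell_q}$. The same Cauchy--Schwarz step also works under the other natural convention $\|\nabla f\|_{\ell_q}=\bigl(\sum_i\|\partial_i f\|_{\ell_q}^2\bigr)^{1/2}$.
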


Since $I_1$ is self-adjoint, the previous result implies the following vector-valued $(1,1)$ Poincaré inequality.

\begin{proposition}
	Let $q\ge 1$. There exists a dimensional constant $c_n>0$ such that, for any cube $Q$ of $\R^n$ and for any vector-valued function $f: \R^n\longrightarrow \ell_q$ with components in $C^1(Q)$, we have 
	\begin{equation*}
		\frac{1}{|Q|} \int_Q \left\| f(x)-f_Q \right\| _{\ell_q} dx \le c_n \ell(Q) \frac{1}{|Q|} \int_Q \left\| \nabla f(x)\right\|_{\ell_q}  dx .
	\end{equation*} 
\end{proposition}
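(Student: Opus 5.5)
The plan is to derive the vector-valued $(1,1)$ Poincaré inequality from the pointwise representation formula of the preceding lemma, and then average. Fix a cube $Q$ and $f$ with components in $C^1(Q)$, and set $g:=\|\nabla f\|_{\ell_q}\chi_Q$. The lemma gives, for every $x\in Q$,
\begin{equation*}
\|f(x)-f_Q\|_{\ell_q}\le c_n I_1 g(x).
\end{equation*}
Integrating over $Q$ and using Tonelli (the integrand is nonnegative), we obtain
\begin{equation*}
\int_Q \|f(x)-f_Q\|_{\ell_q}\,dx \le c_n\int_Q\int_Q \frac{g(y)}{|x-y|^{n-1}}\,dy\,dx = c_n\int_Q g(y)\, I_1(\chi_Q)(y)\,dy.
\end{equation*}
This exchange of integrals is the ``self-adjointness'' of $I_1$ mentioned in the text.

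The only remaining step is the uniform bound $I_1(\chi_Q)(y)\le c_n\,\ell(Q)$ for $y\in Q$. We prove it as follows. For $y\in Q$ we have $Q\subset B(y,\sqrt n\,\ell(Q))$, so
\begin{equation*}
I_1(\chi_Q)(y)\le\int_{B(0,\sqrt n\,\ell(Q))}|z|^{1-n}\,dz=\omega_{n-1}\sqrt n\,\ell(Q),
\end{equation*}
where we have integrated in polar coordinates. Substituting this into the previous display gives
\begin{equation*}
\int_Q \|f-f_Q\|_{\ell_q}\le c_n\,\ell(Q)\int_Q\|\nabla f\|_{\ell_q}.
\end{equation*}
Dividing by $|Q|$ yields the claim.

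None of these steps is a serious obstacle. The point requiring care is that all the work is concentrated in the lemma from \cite{MP}: it must be valid for $\ell_q$-valued functions with the norm taken \emph{inside}, and $\|\nabla f\|_{\ell_q}$ must denote $\bigl\|\,(|\nabla f_k|)_k\bigr\|_{\ell_q}$, as in that lemma. With this convention fixed, the argument is the scalar one verbatim.
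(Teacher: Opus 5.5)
Your proof is correct and follows the paper's route, which it only indicates in one line (``since $I_1$ is self-adjoint''): integrate the pointwise bound from the preceding lemma over $Q$, exchange the order of integration, and use $I_1(\chi_Q)(y)\le c_n\,\ell(Q)$ for $y\in Q$. You simply write out the last bound explicitly, which the paper leaves implicit.
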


Consequently, noting the fact that the truncation method is not known to work in the vector-valued setting (as noted in \cite[p. 2]{MP}), we obtain the following analogue of Corollary \ref{thm:PS-Lorentz-weak}.
 
\begin{corollary}
	Let $q\ge 1$. Let $w\in A_r$ with $1\le r\le p$, $1< p< nr$ and consider $p_{r}^{*}$ defined by
\begin{equation*}
    \frac{1}{p}-\frac{1}{p_{r}^{*}}= \frac{1}{n} \frac{1}{r}. 
\end{equation*}
Then, there exists a dimensional constant $c_n>0$ such that
	\begin{equation*}
		\norm{\norm{f-f_Q}_{\ell_q}}_{\strt{2.3ex} L^{p^*_{r},\infty}\left(Q, \frac{w dx}{w(Q)}\right)}
		\le c_n p^*_{r} [w]_{A_\infty}\,[w]_{A_{p, p^*_{r}}}^{\frac{1}{p}}\,[w]_{A_r}^{\frac{1}{rn}}\ell(Q) \norm{ \|\nabla f\|_{\ell_q}}_{\strt{2.3ex} L^{p, p^*_{r}}\left(Q, \frac{w dx}{w(Q)}\right)},
	\end{equation*}
    for every cube $Q$.
\end{corollary}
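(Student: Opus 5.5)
The plan is to mirror the proof of Corollary \ref{thm:PS-Lorentz-weak}, replacing the scalar ingredients by their $\ell_q$-valued analogues, and then to feed the resulting hypothesis into a vector-valued $D_p$ self-improving theorem in the spirit of Theorem \ref{Thm 1.6 CP}.

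The first step is to set up the starting inequality. By the vector-valued $(1,1)$ Poincaré inequality of the preceding proposition, applied with $g=\|\nabla f\|_{\ell_q}$ in \eqref{Apq}, we get
\begin{equation*}
\frac{1}{|Q|}\int_Q \|f(x)-f_Q\|_{\ell_q}\,dx \le c_n [w]_{A_{p,p_r^*}}^{\frac1p}\,\ell(Q)\,\big\|\|\nabla f\|_{\ell_q}\big\|_{L^{p,p_r^*}\left(Q,\frac{w\,dx}{w(Q)}\right)} =: a(Q).
\end{equation*}
This is valid because $A_r\subset A_p=A_{p,p_r^*}$, using $r\le p$ and $p_r^*>1$.

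The second step is to identify the class of $a$. Since $a$ has the form treated in Lemma \ref{Lemma Lpq SDp}, with scalar $g=\|\nabla f\|_{\ell_q}$, $q$-index equal to $p_r^*$, and $M=1$, that lemma gives $a\in D_{p_r^*}(w)$ with $\|a\|\le [w]_{A_r}^{1/(nr)}$. The lemma only concerns the scalar function $g$, so no vector-valued modification is needed here.

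The third step is the main point. We need the vector-valued version of Theorem \ref{Thm 1.6 CP}: if $\frac{1}{|Q|}\int_Q\|f-f_Q\|_{\ell_q}\le a(Q)$ with $a\in D_{p}(w)$ and $w\in A_\infty$, then
\begin{equation*}
\big\|\|f-f_Q\|_{\ell_q}\big\|_{L^{p,\infty}(Q,w\,dx/w(Q))}\le c_n p[w]_{A_\infty}\|a\|a(Q).
\end{equation*}
I would prove this by running the good-lambda argument of Theorem \ref{Selfimproving Ar} on the scalar function $x\mapsto\|f(x)-f_Q\|_{\ell_q}$, exactly as in the sketch of Theorem \ref{Appendix B}.

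The local Calderón–Zygmund decomposition and the inclusion $w(\Omega_{\kappa t})\le\sum_j w(E_{Q_j})$ use only $\|f_{Q_j}-f_Q\|_{\ell_q}\le 2^n t$, which follows from Minkowski's inequality. The good sets $A_j$ require the good-lambda estimate of \cite[Corollary 1.4]{CP} for the $\ell_q$-sharp maximal function. That estimate rests on a John–Nirenberg-type argument whose proof uses only the triangle inequality and the CZ decomposition, so it carries over to the vector-valued setting, as remarked after Theorem \ref{Appendix B}.

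The bad sets are controlled by the $D_p(w)$ condition. Here one does not use the smallness factor $|\bigcup R_i|/|Q|$. Instead one bounds $w(\bigcup_i R_i)\le(\gamma t)^{-p}\|a\|^p a(Q)^p w(Q)$, which is exactly the $s=\infty$ case of the computation in the proof of Theorem \ref{Selfimproving Ainfty}. Absorbing via $\varphi(N)=\sup_{t\le N}t^{p}w(\Omega_t)$ and choosing $1/\gamma\sim [w]_{A_\infty}p$ then gives the weak bound.

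Applying this with exponent $p_r^*$ gives the stated estimate, with constant $c_n p_r^*[w]_{A_\infty}[w]_{A_{p,p_r^*}}^{1/p}[w]_{A_r}^{1/(rn)}$.

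I expect the main obstacle to be the vector-valued good-lambda inequality of \cite{CP}. Everything else is scalar once one passes to $\|f-f_Q\|_{\ell_q}$.
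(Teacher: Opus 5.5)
Your proposal is correct and follows the route the paper intends. You combine the vector-valued $(1,1)$ Poincaré inequality with \eqref{Apq}, use Lemma~\ref{Lemma Lpq SDp} with $M=1$ to get $a\in D_{p_r^*}(w)$ with $\|a\|\le [w]_{A_r}^{1/(nr)}$, and then invoke a vector-valued version of Theorem~\ref{Thm 1.6 CP} via the good-lambda inequality, which is what the paper's remark about extending \cite[Theorem 1.1]{CP} points to.

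The step you flag as the main obstacle can in fact be avoided. Fix the cube $Q$ and set $h=\|f-f_Q\|_{\ell_q}$. Comparing $h$ with the constant $\|f_R-f_Q\|_{\ell_q}$ gives $\frac{1}{|R|}\int_R|h-h_R|\le \frac{2}{|R|}\int_R\|f-f_R\|_{\ell_q}\le 2a(R)$ for every cube $R$. So the scalar Theorem~\ref{Thm 1.6 CP} applies to $h$, and together with $h_Q\le a(Q)$ this yields the vector-valued weak bound directly.
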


\section{Further extensions: Rectangles}\label{S Rectangles}

In this appendix, we generalize Theorems \ref{Selfimproving Ar} and \ref{Thm Pol 1} to the context of rectangles in $\R^n$. We borrow the terminology from  \cite{CMPR}. We denote by $\mathcal{R}$ the family of rectangles in $\R^n$. By a rectangle, we mean the product of $n$ intervals in $\R$.  We will denote by $SD_{p,\mathcal{R}}^s(w)$ the condition analogous to the $SD_p^s(w)$ condition, considering rectangles instead of cubes. We will state the result in the context of polynomial approximation, with the same definitions as in the previous section. 

\begin{theorem}\label{Thm Rectangles}
	Let $1 \le p < \infty$, $1 \le r < \infty$, $1\le s<\infty$, and let $a$ be a functional over rectangles. Let $w\in A_{r,\mathcal{R}}$ and let $f:\R^n \longrightarrow \R$ be a locally integrable function satisfying
	\begin{equation*}
		\frac{1}{|R|}\int_R |f(x)-P_R^mf(x)|\,dx \le a(R)
	\end{equation*}
	for every rectangle $R$.
	
	\begin{itemize}
		\item Assume that $a\in SD_{p,\mathcal{R}}^s(w)$ and $p<rs$. Then there exists a constant $c_{n,m}>0$ such that, for every rectangle $R$,
		\begin{equation*}
			\norm{f-P_R^mf}_{\strt{2.3ex} L^{p_{r,s}^{*},\infty}\left(R, \frac{w(x)\,dx}{w(R)}\right)}
			\le
			c_{n,m} p_{r,s}^{*}\,[w]_{A_{\infty,\mathcal{R}}}\,[w]_{A_{r,\mathcal{R}}}^{\frac{1}{rs}}\,
			\|a\|\,a(R),
		\end{equation*}
		where $p_{r,s}^{*}$ is defined by the relation
		\begin{equation*}
			\frac{1}{p}-\frac{1}{p_{r,s}^{*}}=\frac{1}{s\,r}.
		\end{equation*}
		
		\item Assume that $a\in SD_{p,\mathcal{R}}^s(w)$ and $p\ge rs$. Then there exists a constant $c_{n,m,p}>0$ such that, for every rectangle $R$,
		\begin{equation*}
			\|f-P_R^mf\|_{\exp L\left(R,\frac{w(x)\,dx}{w(R)}\right)}
			\le
			c_{n, m,p}\,[w]_{A_{\infty,\mathcal{R}}}\,[w]_{A_{r,\mathcal{R}}}^{\frac1p}\,
			\|a\|\,a(R).
		\end{equation*}
	\end{itemize}
\end{theorem}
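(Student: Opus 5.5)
The proof will repeat the argument for Theorem \ref{Selfimproving Ar}, in the polynomial form of Theorem \ref{Thm Pol 1}, with cubes replaced by rectangles throughout. Fix a rectangle $R$. The dyadic structure will be the dyadic subrectangles of $R$, obtained by bisecting every side at once. This is a nested, martingale-type filtration in which each parent has $2^n$ children of equal measure. All the Calderón--Zygmund facts we use persist: the localized maximal operator $M_R$, the maximal stopping rectangles $\{R_j\}$ for $\Omega_t=\{x\in R: M_R(f-P_R^mf)(x)>t\}$, and the estimate
$$t<\frac{1}{|R_j|}\int_{R_j}|f-P^m_Rf|\le 2^nt.$$
The projection bound \eqref{ProyPolinomio1} is affine invariant, so it holds on rectangles with the same constant $c_{n,m}$; this follows by mapping $R$ onto the unit cube. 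Combining it with $P^m_{R_j}f-P^m_Rf=P^m_{R_j}(f-P^m_Rf)$ gives $\|P^m_{R_j}f-P^m_Rf\|_{L^\infty(R_j)}\le c_{n,m}2^nt$. We then choose $\kappa=\kappa(n,m)$ and obtain $w(\Omega_{\kappa t})\le\sum_j w(E_{R_j})$, exactly as before.

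Next, split $E_{R_j}$ using the dyadic polynomial sharp maximal function $M^{\#,m}_R f$ into a set $A_j$ where $M_R^{\#,m}f\le\gamma t$ and a set $B_j$ where it exceeds $\gamma t$. For the $A_j$ we need the good-lambda inequality
$$\sum_j w(A_j)\le c_1e^{-c_2/([w]_{A_{\infty,\mathcal R}}\gamma)}\,w(\Omega_t).$$
This is the main obstacle, because \cite[Corollary 1.4, Theorem 7.1]{CP} are stated for cubes. I would redo that proof in the dyadic-rectangle filtration of a fixed $R$. The John--Nirenberg-type exponential decay of $\{M_{R_j}(f-P^m_{R_j}f)>t,\ M^{\#,m}f\le\gamma t\}$ relative to $|R_j|$ uses only a stopping-time iteration on the dyadic children. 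That iteration is purely combinatorial and works in any filtration with uniformly bounded splitting, here $2^n$. The passage from Lebesgue measure to $w$ uses the Fujii--Wilson $A_\infty$ property, now defined with the strong maximal function over rectangles. I would apply it through the sharp reverse Hölder inequality inside $R_j$ for dyadic subrectangles, which holds with $[w]_{A_{\infty,\mathcal R}}$ by the same martingale proof. This should give the same exponential form with dimensional constants.

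For the $B_j$, the maximal dyadic subrectangles $\{S_i\}$ of $R$ with average oscillation above $\gamma t$ satisfy $\gamma t<a(S_i)$. The $SD^s_{p,\mathcal R}(w)$ condition then gives
$$w\Big(\bigcup S_i\Big)\le (\gamma t)^{-p}\|a\|^p\left(\frac{|\bigcup S_i|}{|R|}\right)^{p/s}a(R)^pw(R).$$
The rectangle analogue of \eqref{Ap 2} follows from Hölder's inequality on $R$ with $[w]_{A_{r,\mathcal R}}$, and it converts $|\bigcup S_i|/|R|$ into $[w]_{A_{r,\mathcal R}}^{1/r}(w(\bigcup S_i)/w(R))^{1/r}$. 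Solving for $w(\bigcup S_i)$ uses $p<rs$, exactly as in the cube case. The function $\varphi(N)=\sup_{0<t\le N}t^{p^*_{r,s}}w(\Omega_t)$ is then absorbed by choosing $\gamma$ with
$$c_1\kappa^{p^*_{r,s}}e^{-c_2/([w]_{A_{\infty,\mathcal R}}\gamma)}=\tfrac12,$$
which yields the first bullet. For the case $p\ge rs$, pass to any $s_0\ge s$ with $\frac1p-\frac1q=\frac1{rs_0}$ and use monotonicity of the $SD$ constant in $s$. Proposition \ref{prop:linearweakLptoexpL} then gives the $\exp L$ bound with $\gamma=c_{n,m}[w]_{A_{\infty,\mathcal R}}[w]_{A_{r,\mathcal R}}^{1/p}\|a\|a(R)$.
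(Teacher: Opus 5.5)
Your proof is correct, but it is organized differently from the paper's. The paper keeps the structure of the proof of Theorem \ref{Thm Pol 1} and replaces the Calder\'on--Zygmund decomposition by the Rising Sun Lemma of \cite{KLS}. That lemma yields pairwise disjoint, non-dyadic subrectangles whose averages equal the threshold exactly, so the stopping step loses no factor $2^n$. This is the basis of the paper's remark that no dimensional constants are needed when $m=0$.

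You instead stop in the dyadic grid of $R$ obtained by bisecting all sides at once. Every ingredient of the cube argument lives inside this filtration: the stopping rectangles, the bound $\|P^m_{R_j}f-P^m_Rf\|_{L^\infty(R_j)}\le c_{n,m}2^nt$, the good-lambda step, the $SD^s_{p,\mathcal R}(w)$ condition on the disjoint family $\{S_i\}$, and the rectangle form of \eqref{Ap 2}. So it all carries over, at the cost of dimensional constants that the statement allows anyway.

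The step you flag as the main obstacle does not actually have to be redone. Let $T$ be the diagonal affine map taking $[0,1]^n$ onto $R$. Then $T$ sends cubes to rectangles, so $\tilde w=w\circ T$ has $[\tilde w]_{A_r}\le [w]_{A_{r,\mathcal R}}$. Because the cube maximal function is dominated by the strong maximal function, also $[\tilde w]_{A_\infty}\le [w]_{A_{\infty,\mathcal R}}$, with the Fujii--Wilson definition you use. Moreover $\tilde a(Q):=a(TQ)$ belongs to $SD^s_p(\tilde w)$ with $\|\tilde a\|\le\|a\|$, and $P^m_Q(f\circ T)=(P^m_{TQ}f)\circ T$. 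Thus \cite{CP} applies verbatim. In fact the whole theorem follows from Theorem \ref{Thm Pol 1} applied to $f\circ T$ on the unit cube, with the same constants.

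Your route is therefore more elementary and reduces entirely to the cube case. The Rising Sun route is the one suited to sharper, dimension-free bookkeeping. However, it requires adapting the good-lambda inequality and the level-set iteration to non-dyadic rectangle families, and the paper's sketch does not spell that out.
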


The proof is the same as that of Theorem \ref{Thm Pol 1} but using the Rising Sun Lemma instead of Calderón--Zygmund decomposition. We state here the lemma for the sake of completeness.
		
	\begin{lemma}[\cite{KLS}]
		Let $R$ be a rectangle in $\R^n$ and let $f$ be a function such that 
		\begin{equation*}
			\frac{1}{|R|}\int_R f(x)dx \le A.
		\end{equation*}
		Then, there exists a pairwise disjoint family of rectangles $\{R_j\}_j$ such that
		\begin{itemize}
			\item $R_j\subset R$ for all $j$.
			\item $\frac{1}{|R_j|}\int_{R_j} f(x)dx = A$ for all $j$.
			\item $f(x)\le A$ a.e. $x\in R\setminus \bigcup_j R_j$. 
		\end{itemize}
	\end{lemma}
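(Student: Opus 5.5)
We would prove the lemma for $f\in L^1(R)$ (as in the applications, $f=|g-P_R^m g|$) by reducing to $g:=f-A$, so that $\int_R g\le 0$. The goal is a disjoint family $\{R_j\}$ of subrectangles with $\int_{R_j} g=0$ such that $g\le 0$ a.e. off $\bigcup_j R_j$. The basic device is a \emph{continuity/intermediate value step}. Suppose $S\subset R$ is a rectangle with $\int_S g>0$. Interpolate linearly between the endpoints of the sides of $S$ and those of $R$. This gives a monotone increasing continuous path of rectangles $S_\theta$, $\theta\in[0,1]$, with $S_0=S$ and $S_1=R$. The function $\theta\mapsto \int_{S_\theta} g$ is continuous by absolute continuity of the integral. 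It is positive at $\theta=0$ and nonpositive at $\theta=1$. Hence there is a first $\theta^*$ with $\int_{S_{\theta^*}}g=0$, and $S\subset S_{\theta^*}\subset R$. For $\theta<\theta^*$, all the rectangles $S_\theta$ have positive $g$-integral.

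Next we would run a greedy exhaustion. Let $\mathcal{F}$ be the class of rectangles $T\subset R$ with $\int_T g=0$ that contain some rectangle of positive $g$-integral. Choose $R_1\in\mathcal F$ with $|R_1|\ge\frac12\sup_{T\in\mathcal F}|T|$. Then restrict to rectangles whose interiors avoid $R_1$, choose $R_2$ the same way, and so on. Since $\sum_j|R_j|\le|R|$, the chosen sizes tend to zero. To verify the third property, take a Lebesgue point $x\notin\bigcup_j\overline{R_j}$ with $g(x)>0$. By differentiation (for rectangles shrinking regularly, e.g.\ cubes, which suffices at Lebesgue points), tiny cubes centred at $x$ have positive $g$-integral. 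By openness of the complement of finitely many selected closed rectangles, such a cube misses $R_1,\dots,R_N$ for each fixed $N$.

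The main obstacle is that expanding this small cube to a rectangle with zero $g$-integral may run into previously selected $R_j$'s, so disjointness and the exhaustion argument clash. Our first attempt would be to change the selection rule to the maximal one used by Korenovskii--Lerner--Stokolos. Among all rectangles in $R$ with $\int g\ge 0$, take the maximal ones under inclusion, and merge overlapping ones. When two such rectangles intersect, the smallest rectangle containing both should again have nonnegative $g$-integral, or else it can be shrunk along the continuity path to an enclosing rectangle with zero integral. This would make the maximal rectangles pairwise disjoint automatically, so that no expansion step ever collides. If this merging claim fails in general, the fallback is a one-variable-at-a-time reduction: apply the classical one-dimensional rising sun lemma to $t\mapsto\int_{a_1}^t\int g\,dx'$ in the first coordinate to obtain strips, then iterate inside each strip in the remaining coordinates, checking at each stage that the strip averages stay $\le A$. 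Once the selected family is established, properties (i) and (ii) hold by construction, and (iii) follows from the differentiation argument above.
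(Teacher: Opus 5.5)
The paper does not prove this lemma; it quotes it from \cite{KLS}. Your argument therefore has to stand on its own, and it has a genuine gap at exactly the point you call the main obstacle. The continuity step is fine. The problem is that a size-based exhaustion can commit to a rectangle after which the third property can no longer be achieved.

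\textbf{The greedy selection can fail.} Take $A=0$, $R=[0,3]^2$, and let $f$ (your $g$) be defined as follows: $f=3$ on $[0,1]^2$; $f=-1$ on $[0,1]\times[1,3]$ and on $[1,3]\times[0,1]$; on $S=[1,3]^2$, $f$ vanishes except for two small bumps $\pm1$ of equal mass inside $[\tfrac32,\tfrac52]^2$.
\begin{itemize}
\item $\int_R f=-1$.
\item A rectangle of area greater than $8$ contains $[\tfrac13,\tfrac83]^2$, hence both bumps. Its integral is $3pq-pQ-Pq<3pq-\tfrac53(p+q)<0$, where $p,q\le1$ and $P,Q>\tfrac53$ are the side lengths of its intersections with $[0,1]^2$ and with $S$. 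Therefore $S\in\mathcal F$ and $|S|\ge\tfrac12\sup_{\mathcal F}|T|$, so your rule allows $R_1=S$.
\item A rectangle interior-disjoint from $S$ that meets $[0,\tfrac13)^2$ in positive measure lies in $\{x_1\le1\}$ or in $\{x_2\le1\}$. In the first case write it as $[u,v]\times[y,z]$ with $y<\tfrac13$. Its integral is $(v-u)\bigl(3(\min(z,1)-y)-(z-1)^+\bigr)>0$, and the second case is symmetric.
\item Hence $[0,\tfrac13)^2$, where $f>0$, can never be covered after this choice, although $\{[0,\tfrac52]^2\}$ is an admissible family.
\item Insisting on exact maximality does not help. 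Take a frame of width $\varepsilon$ around $S=[\varepsilon,1+\varepsilon]^2$, with corner mass $\tfrac32$, adjacent arms of mass $-1$, and the other two arms of mass $-100$. Then $S$ is the unique rectangle of maximal measure with $\int f\ge0$, and $[0,\varepsilon/3)^2$ is still uncoverable.
\end{itemize}
This is also why your closing differentiation step does not close. The cube around $x$ that avoids $R_1,\dots,R_N$ has a size depending on $N$, and $x\notin\bigcup_j\overline{R_j}$ can still be an accumulation point of the $R_j$. So that cube never competes with $|R_{N+1}|\to0$. What is missing is a selection principle guaranteeing that the uncovered positive part can always be absorbed by a zero-average rectangle disjoint from those already chosen, with size bounded below.

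\textbf{Neither repair closes the gap.} The merging claim is false. Let $f=1$ on the cross $([0,3]\times[1,2])\cup([1,2]\times[0,3])$ and $f=-100$ on the four corner squares of $[0,3]^2$. The thickened bars $[0,3]\times[1,2+\tfrac3{199}]$ and $[1,2+\tfrac3{199}]\times[0,3]$ both have zero integral and are inclusion-maximal among rectangles with $\int f\ge0$. They overlap, and the only rectangle containing both is $[0,3]^2$, where $\int f<0$. Shrinking along your continuity path only returns one of the two bars.

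The coordinatewise fallback fails as well. The one-dimensional lemma applied to the marginal $h(x_1)=\int f(x_1,x')\,dx'$ gives strips $I_k\times R'$, which are already admissible. Off the strips you only know $h\le0$, which is information about $(n-1)$-dimensional slices. Solving slice by slice produces $(n-1)$-dimensional boxes that depend on $x_1$, not rectangles in $\R^n$. Worse, for a suitable marginal the rising-sun intervals are exactly the complementary intervals of a fat Cantor set. Then every nondegenerate rectangle meets a strip, and a positive part of $f$ over that set can never be covered once the strips are fixed; for instance $f=1$ on the lower half and $f=-3$ on the upper half of those slices. Producing a disjoint zero-average family that exhausts $\{f>A\}$ is precisely the multidimensional content of \cite{KLS}, and your proposal does not supply it.
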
 

	\begin{remark}
	The preceding lemma does not introduce any dimensional constant. The dimensional dependence in the previous result comes instead from the polynomial projection estimate \eqref{ProyPolinomio1}.  In the non-polynomial case, that is, when the oscillation is measured with respect to the average $f_Q$, Theorem \ref{Thm Rectangles} does not require any dimensional constants.
\end{remark}

\section*{Acknowledgments}
 The authors would like to thank Andrea Cianchi for valuable discussions concerning Theorem \ref{Thm PS 4} during his visit to BCAM in May 2025. The authors also thank Emiel Lorist for helpful comments on quantitative aspects of weighted Poincaré--Sobolev inequalities.

\section*{Conflicts of Interest}
The authors have no conflicts of interest to declare.

\section*{Data Availability Statement}
Data sharing is not applicable to this article, as no datasets were generated or analyzed during the current study.

\bibliographystyle{amsplain}

\end{document}